\documentclass[sfdefaults=false, abstract=true]{scrartcl}
\usepackage[british]{babel}
\usepackage[utf8]{inputenc}
\usepackage{csquotes}
\usepackage[T1]{fontenc}
\usepackage{microtype}
\usepackage{amsmath}
\usepackage{amsthm}
\usepackage{amssymb}
\usepackage{mathtools}
\usepackage[
  backend=biber,
  style=alphabetic,
  sorting=nyt,
  natbib=true,
  maxnames=99,
  maxalphanames=5,
  hyperref=true,
  giveninits=true,
  doi=true,
  isbn=true,
  eprint=true,
  url=true
]{biblatex}
\usepackage{verbatim}
\usepackage{siunitx}  
\usepackage{xcolor}
\usepackage{dsfont}
\usepackage[normalem]{ulem}
\usepackage{needspace}
\usepackage{xspace}
\usepackage{enumitem,calc}
\usepackage{pdfpages}
\usepackage{subcaption}
\usepackage{hyperref}
\hypersetup{
	hidelinks
}

\theoremstyle{plain}
\newtheorem{theorem}{Theorem}[section]
\newtheorem{lemma}[theorem]{Lemma}
\newtheorem{proposition}[theorem]{Proposition}
\newtheorem{corollary}[theorem]{Corollary}

\theoremstyle{definition}
\newtheorem{definition}[theorem]{Definition}
\newtheorem{example}[theorem]{Example}
\newtheorem{remark}[theorem]{Remark}

\newtheorem*{standingassumptions}{Standing Assumptions}
\newtheorem*{datacon}{Data Conditions (D)}
\newtheorem*{hypocon}{Hypocoercivity Conditions (H)}

\newlist{assumptions}{description}{1}
\setlist[assumptions]{
    font=\normalfont,
  labelsep=0.6em,
  labelwidth=\widthof{($\Phi_2$7)},
  leftmargin=\dimexpr\labelwidth+\labelsep\relax,
  align=left,
  style=standard,
  itemindent=0pt,
  listparindent=0pt,
  parsep=0pt
}

\newlist{hypocoercivityassumptions}{description}{1}
\setlist[hypocoercivityassumptions]{
    font=\normalfont,
  labelsep=0.6em,
  labelwidth=\widthof{(H4)},
  leftmargin=\dimexpr\labelwidth+\labelsep\relax,
  align=left,
  style=standard,
  itemindent=0pt,
  listparindent=0pt,
  parsep=0pt
}

\newcommand{\defeq}{\mathrel{\vcentcolon=}}
\newcommand{\eqdef}{\mathrel{=\vcentcolon}}

\newcommand{\dx}{\textup{d}x}
\newcommand{\dy}{\textup{d}y}
\newcommand{\dmu}{\textup{d}\mu}
\newcommand{\dmui}[1]{\textup{d}\mu_{#1}}

\newcommand{\dxi}[1][i]{\partial_{x_{#1}}}
\newcommand{\dyi}[1][i]{\partial_{y_{#1}}}
\newcommand{\gradx}{\nabla_{x}}
\newcommand{\grady}{\nabla_{y}}

\newcommand{\ccinfty}{C_c^{\infty}}

\newcommand{\R}{\mathbb{R}}
\renewcommand{\P}{\mathbb{P}}

\DeclareMathOperator{\supp}{supp}

\DeclareMathOperator{\tr}{tr}
\DeclareMathOperator{\divergence}{div}

\newcommand{\SP}[3]{{\mathopen{(}{#1},{#2}\mathclose{)}}_{#3}}
\newcommand{\SPH}[2]{{\mathopen{(}{#1},{#2}\mathclose{)}}_{H}}
\newcommand{\ind}[1]{\mathds{1}_{#1}}
\newcommand{\Poincare}{Poincar\'e\xspace}

\newcommand{\konst}{c_2}
\newcommand{\Ctheta}{K_\theta}
\newcommand{\lmin}{\lambda_{\textup{min}}(QQ^*)}
\newcommand{\Cq}{\vert Q^*Q\vert_2}
\newcommand{\emax}{\varepsilon_\textup{max}(\theta)}
\newcommand{\sm}[1]{{\scriptscriptstyle#1}}
\newcommand{\Cell}{c_{\sm{\Sigma}}}
\newcommand{\Csigma}{M_{\sm{\Sigma}}}
\newcommand{\core}{\mathcal{D}}
\newcommand{\sL}{\mathcal{L}} 
\newcommand{\cL}{\mathcal{L}} 

\def\sB{\mathcal B}
\def\sE{\mathcal E}
\def\sF{\mathcal F}

\newcommand{\sZ}{\mathcal{Z}}

\def\wt{\widetilde}

\AtEveryBibitem{%
  \clearlist{location}%
  \clearfield{address}%
  \clearfield{issn}%
}
\AtEveryBibitem{%
  \ifentrytype{book}{\clearfield{pages}}{}%
  \ifentrytype{mvbook}{\clearfield{pages}}{}%
  \ifentrytype{collection}{\clearfield{pages}}{}%
  \ifentrytype{mvcollection}{\clearfield{pages}}{}%
  \ifentrytype{proceedings}{\clearfield{pages}}{}%
  \ifentrytype{mvproceedings}{\clearfield{pages}}{}%
}
\AtEveryBibitem{%
  \ifboolexpr{
    not test {\iffieldundef{doi}} or
    not test {\iffieldundef{eprint}} or
    not test {\iffieldundef{isbn}}
  }{%
    \clearfield{url}%
    \clearfield{urldate}%
  }{}%

  \ifentrytype{article}{%
    \clearfield{isbn}%
  }{}%
  
  \ifentrytype{book}{%
    \iffieldundef{isbn}{}{%
      \clearfield{doi}%
    }%
  }{}%
}

\numberwithin{equation}{section}
\areaset[current]{\textwidth}{\dimexpr\textheight+1.3\baselineskip\relax}

\title{Hypocoercivity for Hamiltonian Diffusions \\ with Singular Drift}

\author{Zhen-Qing Chen\textsuperscript{1} \and Martin Grothaus\textsuperscript{2} \and Onno Pfohl\textsuperscript{3,4,*}}

\date{}

\begin{document}
	\maketitle
    \thispagestyle{empty}
    \enlargethispage{4\baselineskip}
    \begingroup
    \renewcommand{\thefootnote}{\arabic{footnote}}
    \footnotetext[1]{Department of Mathematics, University of Washington, Seattle, WA 98195, USA. \\Email: \texttt{zqchen@uw.edu}.}
    \footnotetext[2]{Department of Mathematics, RPTU University Kaiserslautern–Landau, 67663 Kaiserslautern, Germany. \\Email: \texttt{grothaus@rptu.de}.}
    \footnotetext[3]{Institute of Mathematics, Technische Universität Berlin, 10623 Berlin, Germany. \\Email: \texttt{pfohl@math.tu-berlin.de}.}
    \footnotetext[4]{Department of Mathematics, Humboldt-Universität zu Berlin, 10099 Berlin, Germany.}
    \renewcommand{\thefootnote}{*}
    \footnotetext{Corresponding author.}
    \endgroup
	
	\begin{abstract}
        We establish $L^2$-exponential strong ergodicity (strong mixing) with an explicit rate of convergence for a class of degenerate diffusions with multiplicative noise and with singular drift in both the noisy and noise-free components. This class includes diffusions with an additional inert drift given by the gradient of a singular potential, as well as singular generalized stochastic Hamiltonian systems. Cases in which the diffusion is confined to a proper, bounded or unbounded subset of $\mathbb{R}^{d_1+d_2}$ are included. Concrete examples of admissible potentials are provided.
        
        To obtain these results, we use an analytical approach and study the long-time behavior of the strongly continuous contraction semigroup generated by the formal Kolmogorov backward operator. Using the theory of generalized Dirichlet forms, these objects are then identified with the transition semigroup and generator of the unique weak solution to the original stochastic differential equation. The existence and uniqueness of this solution are established under near-minimal conditions.  
     \end{abstract}

    \noindent
    \textbf{Keywords}: Exponential ergodicity, singular degenerate SDE, Markov semigroups 

    \medskip
    
    \noindent \textbf{MSC Classification (2020)}: 37A25, 60H10, 37J25, 47D07
    
    \section{Introduction and Main Results}	
    \label{sec:intro}
    We study a class of degenerate stochastic dynamics on $\mathbb{R}^{d_1}\times\mathbb{R}^{d_2}$ in which the noise acts only in the second component, while both components may be driven by singular potentials.
    Concretely, we consider the stochastic differential equation (SDE)
    \begin{equation}
    	\label{eq:sde:general}
    	\begin{split}
    		\textup{d}X_t &= Q \nabla\phi_2(Y_t) \,\textup{d}t, \\
    		\textup{d}Y_t &= (\divergence \Sigma - \Sigma \nabla\phi_2)(Y_t) \,\textup{d}t - Q^*\nabla\phi_1(X_t) \,\textup{d}t + \sqrt{2} \sigma(Y_t) \,\textup{d}B_t,
    	\end{split}
    \end{equation}
    where $\sigma\colon \mathbb{R}^{d_2} \to \mathbb{R}^{d_2\times d_2}$ is a weakly differentiable matrix-valued function, $\Sigma = \sigma\sigma^*$ with $\sigma^*$ denoting the adjoint of $\sigma$, the potentials $\phi_i\colon \mathbb{R}^{d_i}\to \R \cup \{+\infty\}$ are possibly singular (that is, they may take the value $+\infty$) and continuous in the extended sense (that is, with respect to the usual topology on $(-\infty, \infty]$),
     $Q\in \mathbb{R}^{d_1\times d_2}$ is constant, and 
    \begin{equation*}
    	(\divergence \Sigma)_{i} (y) \defeq \sum_{j=1}^{d_2} \partial_{j} \Sigma_{ij}(y). 
    \end{equation*}
    Equation \eqref{eq:sde:general} describes a diffusion $(Y_t)_{t\geq 0}$ with an additional drift $- Q^*\nabla\phi_1(X_t)$. For $d_1=d_2$ and $Q^*\nabla\phi_1(X_t)=X_t$, it recovers diffusions with inert drift studied in Section~4 of \cite{BBCH10}. If $X_t$ and $Y_t$ are interpreted as position and velocity, respectively, then \eqref{eq:sde:general} generalizes the Langevin dynamics and is also known as a generalized stochastic Hamiltonian system. 
    
    Stochastic Hamiltonian systems are widely used in fields where physical quantities, such as energy, are perturbed by random influences or noise. They are primarily used to model the dynamics of complex systems whose exact microscopic state is unknown or whose flow is subject to external fluctuations. They are essential in physics for understanding the transition from microscopic chaos to macroscopic laws. In this context, the Hamilton equations are augmented by a friction term and a stochastic force, typically multiplicative noise. In the case of interacting particle systems, singular potentials are often used to model repulsion or confinement. That is why it is of great interest to examine both singular potentials and multiplicative noise.

    Hamiltonian systems with singular potentials in the deterministic component of the equation can be used to model stochastic differential equations with inert drift. In this model, the singular potential $\phi_2$ causes a confinement. The associated drift term blows up as the diffusion process approaches the boundary. For a discussion of such physical models, we refer the reader to \cite{MR1872429} and \cite{BURDZY2007278}.

    As the first main result of this paper, we establish existence and uniqueness of the weak solution to SDE \eqref{eq:sde:general} for each starting point in $\{\phi_1 <\infty \} \times \{\phi_2 <\infty \}$ under some nearly minimal conditions.

    \begin{theorem}
        \label{thm:existence_uniqueness_weak_sol}
        Assume that $\sigma$ is bounded and locally Lipschitz on $\R^{d_2}$ and $\Sigma :=\sigma \sigma^*$ is uniformly elliptic. Moreover, assume that $\phi_i$ is locally Lipschitz on $\Omega_i\defeq \{\phi_i < \infty\}$ for $i=1,2$, and that $Z_2:= \int_{\Omega_2} e^{- \phi_2 (y)} \,\dy <\infty$ and $\int_{\Omega_2} |\nabla \phi_2 (y)|^2 e^{-\phi_2 (y)} \,\dy<\infty$.
        Then, for every $(x, y)\in \Omega_1 \times \Omega_2$, SDE \eqref{eq:sde:general} has a unique weak solution $(X, Y)$ with $(X_0, Y_0)=(x, y)$ up to the lifetime $\zeta:= \inf\{t>0: (X_t, Y_t) \notin \Omega_1 \times \Omega_2\}$. 
	\end{theorem}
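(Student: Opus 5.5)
Since $\phi_1,\phi_2$ are locally Lipschitz on the open sets $\Omega_1,\Omega_2$ and $\sigma$ is bounded and locally Lipschitz, all coefficients of \eqref{eq:sde:general} are locally Lipschitz on $\Omega\defeq\Omega_1\times\Omega_2$, except the term $\divergence\Sigma$, which is only locally bounded (by Rademacher, $\sigma$ is weakly differentiable with locally bounded derivatives). The plan is therefore a localisation argument. I would exhaust $\Omega$ by relatively compact open sets $U_n\uparrow\Omega$ with $\overline{U_n}\subset\Omega$. On each $U_n$ I would replace the coefficients by globally bounded ones that agree with them on $U_n$: the drift $b(x,y)$ and the diffusion matrix $\sqrt2\,\mathrm{diag}(0,\sigma(y))$.

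\textbf{Step 1 (local well-posedness).} The noise is degenerate, since only $Y$ is noisy, and $\divergence\Sigma$ is merely bounded measurable. So strong uniqueness via Lipschitz theory is not directly available. To handle this, I would remove the non-Lipschitz part of the drift by a Girsanov transformation. The term $\divergence\Sigma(Y_t)$ lies in the range of $\sigma$, because $\sigma$ is invertible by uniform ellipticity. It can therefore be written as $\sqrt2\,\sigma(Y_t)h(Y_t)$ with $h=\tfrac1{\sqrt2}\sigma^{-1}\divergence\Sigma$, which is bounded on $U_n$. After this change of measure, the localized SDE has locally Lipschitz bounded coefficients. It thus has a unique strong, hence weak, solution, and Novikov's condition holds because $h$ is bounded. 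Girsanov then transfers existence and uniqueness in law back to the localized SDE with drift $\divergence\Sigma$: any weak solution, after the inverse Girsanov change, becomes a weak solution of the Lipschitz equation, whose law is unique.

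\textbf{Step 2 (patching).} Let $\tau_n$ be the exit time from $U_n$. The laws of the solutions up to $\tau_n$ are consistent in $n$, by uniqueness on $U_n\subset U_{n+1}$. Kolmogorov/Ionescu-Tulcea extension (the standard Stroock--Varadhan localisation) then yields a weak solution up to $\zeta=\lim_n\tau_n$. Uniqueness up to $\zeta$ follows because any two solutions coincide in law on each $[0,\tau_n]$. Here $\zeta$ coincides with the exit time from $\Omega$ defined in the statement, since $\overline{U_n}$ is compact in $\Omega$. The limit $\lim_n\tau_n$ a priori also captures explosion to infinity. Because $\zeta$ in the statement is the exit from $\Omega_1\times\Omega_2$ with points at infinity regarded as outside, this is consistent.

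\textbf{Main obstacle.} I expect the main obstacle to be the integrability assumptions $Z_2<\infty$ and $\int|\nabla\phi_2|^2e^{-\phi_2}<\infty$, which my argument does not use. This suggests that the authors intend either a stronger notion of solution, global in some sense, or a construction through generalized Dirichlet forms with invariant measure $e^{-\phi_1-\phi_2}$. In that case, uniqueness would be proved among solutions tied to that framework, and existence would come from the associated Hunt process. Should the Girsanov argument for degenerate noise fail to give uniqueness in law cleanly, I would fall back on that route. I would build the generalized Dirichlet form on $L^2(\Omega,e^{-\phi_1-\phi_2}\dx\dy)$, where the $L^2$ gradient condition ensures that the drift is square integrable so that the form is well defined. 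I would then identify its process with the weak solution via Fukushima/Trutnau's theory, and verify uniqueness through essential m-dissipativity of the generator on $\ccinfty(\Omega)$.
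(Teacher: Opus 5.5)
Your Step 1 rests on a false premise. Apart from $\divergence\Sigma$, the drift consists of $Q\nabla\phi_2(Y)$, $-\Sigma\nabla\phi_2(Y)$ and $-Q^*\nabla\phi_1(X)$. Since the $\phi_i$ are only assumed locally Lipschitz, $\nabla\phi_i$ is merely locally bounded and measurable (defined a.e.\ by Rademacher's theorem), not locally Lipschitz; you have lost one derivative. After removing only $\divergence\Sigma$, the localized system is therefore a degenerate SDE with bounded measurable drift, including a bounded measurable drift in the noiseless $X$-component. Lipschitz theory does not apply, and Zvonkin/Veretennikov-type results require noise in every component. No general well-posedness theory covers degenerate equations with measurable drift. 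So the claim that the localized equation has a unique strong, hence weak, solution is unjustified, and Step 1 fails as written.

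The missing idea is to use the structure of the system. Every term of the $Y$-drift, including the $X$-dependent one, lies in the range of the invertible $\sigma$, and the $X$-equation involves only $Y$. You should therefore remove the \emph{whole} $Y$-drift by Girsanov, with kernel $\tfrac{1}{\sqrt2}\sigma^{-1}(Y)\bigl(\divergence\Sigma(Y)-\Sigma\nabla\phi_2(Y)-Q^*\nabla\phi_1(X)\bigr)$, which is bounded before the exit time from $U_n$. The reference system is then $dY=\sqrt2\,\sigma(Y)\,dW$, which is autonomous with a bounded, locally Lipschitz coefficient and hence pathwise unique. It is coupled with $X_t=x+\int_0^t Q\nabla\phi_2(Y_s)\,ds$, a path functional of $Y$ that needs no regularity. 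Girsanov and inverse Girsanov then give existence and uniqueness in law up to each exit time, and your Step 2 completes the argument.

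This is exactly the mechanism of the paper, which performs the transforms in stages: starting from the divergence-form diffusion $Y^0$ with drift $\divergence\Sigma$, it removes $\Sigma\nabla\phi_2$ and then $Q^*\nabla\phi_1(X)$. Once repaired, your localized argument proves the statement as literally formulated, and, as you suspected, it does not need $Z_2<\infty$ or $\int|\nabla\phi_2|^2e^{-\phi_2}\,dy<\infty$.

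The paper uses these hypotheses for a stronger conclusion. They give $Z_2^{-1/2}e^{-\phi_2/2}\in W^{1,2}_0(\Omega_2)$. With Aronson's estimates and the Girsanov result of Chen--Fitzsimmons--Takeda--Ying--Zhang, this shows that the $Y$-diffusion with drift $\divergence\Sigma-\Sigma\nabla\phi_2$ is a conservative $\mu_2$-symmetric diffusion that never leaves $\Omega_2$. Hence $X$ is defined for all times, and the lifetime is exactly the exit time of $X$ from $\Omega_1$. Your generalized-Dirichlet-form fallback is unnecessary, and it would require assumptions such as (eL) that are not available here.
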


    As a consequence of Theorem \ref{thm:existence_uniqueness_weak_sol}, SDE \eqref{eq:sde:general} has unique weak solutions for arbitrary initial distributions under the same conditions, see Theorem \ref{T:2.4}. 

    The goal of this paper is to establish, under some suitable conditions, the $L^2$-exponential ergodicity of these solutions with an explicit convergence rate. 
    To this end, we study the transition semigroup of $(X, Y)$ and its generator using an analytic approach. We define a candidate invariant measure $\mu$ and a candidate generator $(\cL, D(\cL))$ of a strongly continuous contraction semigroup 
    on $L^2(\R^{d_1 + d_2},\mu)$.
    Our next main result, Theorem \ref{thm:main:abstract}, establishes the exponential $L^2$-convergence for this abstract semigroup. It is then identified with the transition semigroup of the weak solution to SDE \eqref{eq:sde:general} (for certain absolutely continuous initial distributions) in Theorem \ref{thm:identification}.
    
    Recall that $\Omega_i = \{x\in\mathbb{R}^{d_i} : \phi_i(x)<\infty\}$ for $i=1, 2.$ We set $\core \defeq C_c^2(\Omega_1\times\Omega_2)$. Then, by Itô's formula, every weak solution solves the martingale problem for the following formal Kolmogorov backward operator on $\core$:
    \begin{equation}
    	\label{eq:intro:generator}
    	\sL f \defeq \tr(\Sigma \nabla_y^2 f) + (\divergence \Sigma - \Sigma\grady\phi_2)\cdot \grady f
    	+ (Q\grady\phi_2)\cdot\gradx f - (Q^*\gradx\phi_1)\cdot\grady f.
    \end{equation}
    Here, the first $d_1$ coordinates of $E \defeq \mathbb{R}^{d_1}\times \mathbb{R}^{d_2}$ are denoted by $x$ and the last $d_2$ coordinates by $y$, the gradients $\gradx$, $\grady$, and the Hessian $\nabla_y^2$ are understood with respect to the corresponding component $x$ respectively $y$.

    The natural candidate for the equilibrium measure is the Gibbs measure 
    \begin{equation*}
    	\mu\defeq \mu_1\otimes\mu_2, \quad \textup{where}\quad \mu_i \defeq Z_i^{-1}e^{-\phi_i}\,\dx \quad \text{and}\quad
    	Z_i\defeq\int_{\mathbb{R}^{d_i}}e^{-\phi_i(x)}\,\dx<\infty.
    \end{equation*} 
    Indeed, under the assumptions below, integration by parts yields that $\sL^*\mu=0$ in the distributional sense, see Proposition \ref{prop:verification_data_conditions}.

    We regard $(\sL,\core)$ as an operator on $L^2(E,\mu)$. Note that the kernel of its symmetric part contains $L^2(\R^{d_1},\mu_1)$, so that its is not coercive on $\{ f\in L^2(E,\mu) : \mu(f)=0\}$. This makes studying the convergence of semigroups generated by extensions of $(\sL,\core)$ a problem of hypocoercive type, see \cite{Villani}.

    Under the assumptions below, $(\sL,\core)$ is densely defined and dissipative, hence closable. Its closure $(\cL,D(\cL))$ generates a strongly continuous contraction semigroup if and only if $(\sL, \core)$ is essentially $m$-dissipative, or equivalently $(\lambda I-\sL)(\core)$ is dense in $H$ for some 
    $\lambda >0$. In this case we can study the generated semigroup via the direct $L^2$-method proposed by Dolbeault, Mouhot, and Schmeiser in \cite{Dolbeault_Mouhot_Schmeiser_Hypocoercivity_article} and extended in \cite{P1_Grothaus_Stilgenbauer_HypocoercivityKolmogorovBackward}.

    Here, the condition of essential $m$-dissipativity has an equivalent probabilistic interpretation. Suppose that the transition function of a Markov solution to \eqref{eq:sde:general} leaves $\mu$ invariant and induces a strongly continuous contraction semigroup $(P_t)_{t\geq 0}$ on $L^2(E,\mu)$ with generator $(\cL_P,D(\cL_P))$. 
    Then, $(\sL, \core)$ is essentially $m$-dissipative if and only if $\core$ is a core of $(\cL_P,D(\cL_P))$.
    
    In \cite{P2_Grothaus_Stilgenbauer_LangevinRevisited} and \cite{Bertram_Grothaus_HypocoercivityLangevinMultipicativeNoise} the Hilbert space hypocoercivity method we employ was applied to the Langevin dynamics where $\phi_2=\tfrac{1}{2}\vert \cdot \vert^2$.
    As long as $\phi_2$ is assumed to be radially symmetric up to an affine coordinate transformation, see Condition ($\Phi_2 5$) below, many arguments generalize to our setting. 
    The key challenge is to establish auxiliary condition (H4) of the abstract Hilbert space hypocoercivity method discussed in Section \ref{sec:hypocoercivity_abstract}.
    
    In this context, previous work, e.g., \cite{P2_Grothaus_Stilgenbauer_LangevinRevisited}, \cite{P4_Grothaus_Wang_WeakPoincare_generalizedLangevinDynamics}, \cite{Bertram_Grothaus_HypocoercivityLangevinMultipicativeNoise} and \cite{Bertram_Grothaus_JDE_generalizedLangevinDynamics}, usually relied on the following restrictive growth condition on $\phi_1$:
    \begin{equation}
    	\label{eq:growth:old}
    	\lvert\nabla^2\phi_1(x)\rvert \leq C (1+\lvert \nabla \phi_1(x) \rvert) \qquad\text{for all }x\in\Omega_1 =\{\phi_1 <\infty\}\subset \mathbb{R}^{d_1},
    \end{equation}
    where $C\in(0,\infty)$ is constant.	But \eqref{eq:growth:old} does not allow singular potentials: if $\phi_1$ is continuous in the extended sense, lies in $C^2(\Omega_1)$ and satisfies \eqref{eq:growth:old}, then $\Omega_1=\emptyset$ or $\Omega_1 = \R^{d_1}$, see Proposition \ref{prop:growth}.
    Therefore, we assume instead that for all $\varepsilon>0$ there is a constant $C_\varepsilon\in (0,\infty)$ such that
    \begin{equation*}
		\vert\nabla^2\phi_1(x)\vert \leq \varepsilon\vert\nabla\phi_1(x)\vert^2 + C_\varepsilon \qquad\text{for all }x\in\Omega_1.  	
    \end{equation*}
	Under this condition, exponential convergence to equilibrium of the Langevin dynamics was obtained for singular potentials $\phi_1$ belonging to $C^\infty(\Omega_1)$ in \cite{P3_Camrud_LangevinSingularPotentials}.
	To this end, Camrud, Herzog, Stoltz, and Gordina derived new elliptic regularity estimates that we use in a slightly generalized version, see Theorem \ref{thm:apriori_estimates}.
	These estimates and an idea from \cite{Bertram_Grothaus_JDE_generalizedLangevinDynamics} allow us to establish the critical auxiliary condition (H4).

    There are complementary results on ergodicity with rate of convergence for singular degenerate stochastic differential equations. In the Langevin case with $C^\infty$-potentials, these were obtained in \cite{HerzogMattingly2019} via Lyapunov function techniques, in \cite{BaGoHe2021} via a combination of Gamma calculus and Lyapunov function techniques, and in \cite{GS15} via Dirichlet form and martingale techniques. The latter one works under the weakest assumption concerning smoothness of the potential in the position variable. For a general class of equations without the a priori assumption of the existence of an invariant measure, these were obtained in \cite{GroPanWang2024} using the Zvonkin transform to remove the singular drift. 
    
    However, the methods described above do not work in cases where both components have singular potentials. Hence, our approach is the first that allows a singular potential in both the noisy and the noise-free component. Concrete admissible potentials, not included in previous results, are presented in Example \ref{ex:examples} below.

    We now turn to our main results on the $L^2$-exponential strong ergodicity. 
    For convenience, we first list the assumptions used in the statements below. 
    We emphasize that these assumptions are not imposed globally, the relevant subset is specified explicitly in each section of the article.

    Verifying the essential $m$-dissipativity of $(\sL, \core)$ is not part of the hypocoercive arguments and is delicate in the singular setting considered here. We therefore present two versions of our main result. Theorem \ref{thm:main:abstract} proves exponential convergence in a general setting under the abstract assumption that $(\sL, \core)$ is essentially $m$-dissipative.
    Corollary \ref{thm:main:explicit} yields this convergence in a more restrictive setting, under assumptions on the potentials that can be checked explicitly.
    
    \medskip
	\noindent\textbf{Assumptions on the potentials.}
	\begin{assumptions}
        \item[($\Phi_i$1)]  The effective domain $\Omega_i =\{x\in\mathbb{R}^{d_i} : \phi_i(x)<\infty\} $ is open and nonempty. For all $k\in \mathbb{N}$, the sublevel set $\Omega_{i,k}\defeq\{x\in\mathbb{R}^{d_i} : \phi_i(x)<k\}$ has compact closure contained in $\Omega_i$. Moreover, $\phi_i\in C^2(\Omega_i)$ with $Z_i = \int_{\mathbb{R}^{d_i}} e^{-\phi_i(x)} \dx < \infty$.  
        
		\item[($\Phi_i$2)]   The \Poincare inequality holds for $\mu_i$, i.e., there is a constant $\Lambda_i >0$ such that
		\begin{displaymath}
			\bigl\Vert \nabla f\bigr\Vert_{L^2(\Omega_i,\mu_i)}^2 \geq \Lambda_i \bigl\lVert f -\textstyle\int\nolimits_{\Omega_i} f\,\dmui{i} \bigr\rVert_{L^2(\Omega_i,\mu_i)}^2 \qquad\textup{for all }f\in C_c^2(\Omega_i).
		\end{displaymath}
	
		\item[($\Phi_1$3)] It holds $\vert\nabla\phi_1\vert \in L^2(\Omega_1,\mu_1)$.
	
		\item[($\Phi_1$4)] For all $\varepsilon>0$ there is a constant $C_\varepsilon\in (0,\infty)$ such that
		\begin{equation*}
			\vert\nabla^2\phi_1(x)\vert \leq \varepsilon\vert\nabla\phi_1(x)\vert^2 + C_\varepsilon \qquad\text{for all }x\in\Omega_1=\{\phi_1 < \infty \}.
		\end{equation*}
		\item[($\Phi_2$3)] It holds $\vert\nabla\phi_2\vert \in L^4(\Omega_2,\mu_2)$ and $\vert\nabla^2\phi_2\vert \in L^2(\Omega_2,\mu_2)$.
		\item[($\Phi_2$4)] The potential $\phi_2$ is three times weakly differentiable on $\Omega_2$, and all third-order weak derivatives lie in 
        $L^2(\Omega_2, \mu_2)$. Furthermore, $\vert \nabla\phi_2 \vert \cdot \vert \nabla^2\phi_2 \vert \in  L^2(\Omega_2, \mu_2)$.
		\item[($\Phi_2$5)] There is a function $\psi:[0,\infty)\to\mathbb{R}\cup\{\infty\}$ with $\psi\in C^2(\{\psi<\infty\})$, an invertible matrix $\tau\in\mathbb{R}^{d_2\times d_2}$ and $b\in\mathbb{R}^{d_2}$ such that
		\begin{equation*}
			\phi_2(y) = \psi(\vert\tau y - b\vert^2) \qquad\text{for all } y\in\mathbb{R}^{d_2}.
		\end{equation*}
	\end{assumptions}
    \smallskip
    \noindent\textbf{Assumptions on the diffusion matrix.}
	\begin{assumptions}
		\item[($\Sigma$1)] $\Sigma$ is uniformly strictly elliptic on $\Omega_2$, i.e., there is a constant $\Cell \in (0,\infty)$ such that
		\begin{equation*}
			(v, \Sigma(y)v)_{\mathbb{R}^{d_2}} \geq \Cell \,\vert v\vert^2 \qquad\text{for all }v\in\mathbb{R}^{d_2}\text{ and } \mu_2\text{-almost all } y\in \Omega_2.
		\end{equation*}
		\item[($\Sigma$2)] $\Sigma$ is bounded and Lipschitz on $\Omega_2$, or equivalently $\Sigma_{ij} \in W^{1,\infty}(\Omega_2)$ for all $1\leq i,j \leq d_2$. We set 
		$\Csigma \defeq \max\{ \Vert \Sigma_{ij} \Vert_{L^{\infty}(\Omega_2)} , 
		\Vert \partial_k \Sigma_{ij} \Vert_{L^{\infty}(\Omega_2)} : 1\leq i,j,k \leq d_2 \}$.
	\end{assumptions}
    \smallskip
    \noindent\textbf{Functional analytic assumptions.}

    \noindent  For equivalent characterizations of essential $m$-dissipativity, we refer to Remark \ref{rem:ess-m-dissipativity}.
	\begin{assumptions}
		\item[(eL)] The operator $(\sL, \core)$ is essentially $m$-dissipative on $L^2(E,\mu)$.
		\item[(eT)] The operator $(T,C_c^2(\Omega_1))$ defined via
		$Tf \defeq \sum_{i,j=1}^{d_1}(Q\tau^* \tau Q^*)_{ij} (\partial_i\partial_j f-\partial_i\phi_1\partial_jf)$
		is essentially $m$-dissipative on $L^2(\mathbb{R}^{d_1},\mu_1)$.
	\end{assumptions}

	\begin{theorem}[General setting]
		\label{thm:main:abstract}
		Let $d_1 \leq d_2$, let $Q\in \mathbb{R}^{d_1\times d_2}$ be of full rank, and let $\Sigma\colon \mathbb{R}^{d_2} \to \mathbb{R}^{d_2\times d_2}$ be pointwise symmetric.
		Assume \textup{($\Phi_1$1)} - \textup{($\Phi_1$4)}, \textup{($\Phi_2$1)} - \textup{($\Phi_2$5)}, \textup{($\Sigma$1)}, \textup{($\Sigma$2)}, \textup{(eL)}, and \textup{(eT)}. 
	  Let $(T_t)_{t\geq 0}$ denote the strongly continuous contraction semigroup on $L^2(E,\mu)$ generated by the closure of $(\sL, \core)$.
        Then for each $C \in (1,\infty)$ there is $\lambda \in (0,\infty)$ such that
		\begin{equation}
			\label{eq:intro:main_con}
			\bigl\lVert T_t f - \textstyle\int\nolimits_{E} f \,\dmu \bigr\rVert_{L^2(E,\mu)} \leq C e^{-\lambda t} \bigl\lVert f - \textstyle\int\nolimits_{E} f \,\dmu \bigr\rVert_{L^2(E,\mu)} 
		\end{equation}
		for all $t\geq 0$ and $f\in L^2(E,\mu)$. Furthermore,
		\begin{equation*}
			\lambda = \frac{C-1}{C} \frac{\Cell}{n_1 + n_2 \Csigma + n_3 \Csigma^2}
		\end{equation*}
		for constants $n_1, n_2, n_3 \in (0,\infty)$ that are explicitly computable and independent of $\Sigma$.
	\end{theorem}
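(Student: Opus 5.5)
The plan is to verify the data conditions (D) and hypocoercivity conditions (H1)--(H4) of the abstract Hilbert space method of Dolbeault, Mouhot and Schmeiser in the form of \cite{P1_Grothaus_Stilgenbauer_HypocoercivityKolmogorovBackward}, and then to invoke that abstract theorem, which yields \eqref{eq:intro:main_con} with $\lambda$ expressed through the constants of (H2)--(H4). The setting is $H=L^2(E,\mu)$ with core $\core=C_c^2(\Omega_1\times\Omega_2)$. We split $\sL=S-A$, where
\[
Sf=\tr(\Sigma\nabla_y^2 f)+(\divergence\Sigma-\Sigma\grady\phi_2)\cdot\grady f
\]
is $\mu$-symmetric and nonpositive, and
\[
A f=-(Q\grady\phi_2)\cdot\gradx f+(Q^*\gradx\phi_1)\cdot\grady f
\]
is antisymmetric. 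The orthogonal projection $P$ is $Pf=\int f\,\dmui{2}$, which acts as a function of $x$ only, and $P_S=P-(\cdot,1)_H$.

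\textbf{Step 1: data conditions.} The data conditions (D) follow from Proposition \ref{prop:verification_data_conditions} together with (eL). These give invariance $\mu(\sL f)=0$, $1\in D(\cL)$, and $P(\core)\subset D(A)$.

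\textbf{Step 2: computing $PAP$ and $(AP)^*AP$.} For $f=f(x)$ we have $Af=-(Q\grady\phi_2)\cdot\gradx f$. Hence $PAPf=0$ because $\int\grady\phi_2\,\dmui{2}=0$, which gives (H1). Next we compute $(AP)^*APf=PA^2Pf$. Using ($\Phi_25$) and the radial symmetry of $\phi_2$ after the affine change $y\mapsto\tau y-b$, the second moments take the form
\[
\int \partial_k\phi_2\,\partial_l\phi_2\,\dmui{2}=c\,(\tau^*\tau)_{kl},\qquad c>0,
\]
where $c>0$ is made finite by ($\Phi_23$). Therefore $PA^2P=-c\,T$ on $C_c^2(\Omega_1)$, with $T$ the operator from (eT).

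\textbf{Step 3: (H2) and (H3).} Microscopic coercivity (H2) follows from ($\Sigma1$) and the Poincaré inequality ($\Phi_22$) for $\mu_2$:
\[
-(Sf,f)_H\geq \Cell\Vert\grady f\Vert^2\geq \Cell\Lambda_2\Vert (I-P)f\Vert^2 .
\]
Macroscopic coercivity (H3) follows from ($\Phi_12$) applied to $T$. This uses that $Q\tau^*\tau Q^*$ is positive definite, which holds because $Q$ has full rank and $d_1\leq d_2$. The essential $m$-dissipativity (eT) makes $(AP)^*AP$ essentially selfadjoint on $P(\core)$, as the abstract framework requires.

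\textbf{Step 4: the main obstacle, (H4).} The key and hardest step is the auxiliary condition (H4). It asks for bounds
\[
\Vert BS(I-P)f\Vert\leq c_1\Vert (I-P)f\Vert,\qquad \Vert BA(I-P)f\Vert\leq c_2\Vert(I-P)f\Vert,
\]
where $B=(I+(AP)^*AP)^{-1}(AP)^*$. By duality this reduces to estimating $\Vert (S A P)^* g\Vert$ and $\Vert (A^2P)^*g\Vert$ for $g=(I-cT)^{-1}h$. Concretely, we need $L^2(\mu)$-bounds on the second derivatives of $g$ and on $\vert\nabla\phi_1\vert\,\vert\nabla g\vert$, controlled by $\Vert h\Vert_{L^2(\mu_1)}$.

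\emph{(a) The $BS$ term.} Here $S$ acts on $(Q\grady\phi_2)\cdot\gradx g$ and produces terms $\Sigma\nabla^2\phi_2$, $\divergence\Sigma\,\nabla\phi_2$, $\Sigma\nabla\phi_2\nabla^2\phi_2$ and third derivatives of $\phi_2$. These lie in $L^2(\mu_2)$ by ($\Phi_23$), ($\Phi_24$) and ($\Sigma2$), and their size is linear in $\Csigma$. Hence this term needs only $\Vert\gradx g\Vert\leq C\Vert h\Vert$, which follows from the resolvent identity.

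\emph{(b) The $BA$ term.} This term involves $\nabla_x^2 g$, paired with fourth moments of $\nabla\phi_2$ (finite by ($\Phi_23$)), and also $\nabla\phi_1\cdot\gradx g$. For these we invoke the elliptic a priori estimates of Theorem \ref{thm:apriori_estimates} (the generalization of \cite{P3_Camrud_LangevinSingularPotentials}), which are valid under ($\Phi_14$):
\[
\Vert\nabla^2 g\Vert_{L^2(\mu_1)}+\Vert\,\vert\nabla\phi_1\vert\,\vert\nabla g\vert\,\Vert_{L^2(\mu_1)}\leq C\Vert h\Vert_{L^2(\mu_1)}.
\]
Following the idea of \cite{Bertram_Grothaus_JDE_generalizedLangevinDynamics}, we handle the non-commutativity of $A^2$ with $P$ by reorganising the terms so that only these quantities appear.

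\textbf{Step 5: collecting the constants.} Tracking the constants, $c_1$ grows linearly in $\Csigma$, while $c_2$ and the (H3) constant are independent of $\Sigma$. The (H2) constant is $\Cell\Lambda_2$. Inserting these into the explicit rate formula of the abstract theorem gives the dependence $\Cell/(n_1+n_2\Csigma+n_3\Csigma^2)$. The factor $(C-1)/C$ comes from the usual trade-off between the modified entropy and the $L^2$-norm, which is governed by the choice of $\varepsilon$.
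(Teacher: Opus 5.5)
Your proposal is correct and follows the paper's proof essentially step for step. You obtain (D) from (eL); (H1)--(H3) from the vanishing first moment and the $\tau^*\tau$-proportional second moments of $\nabla\phi_2$ together with both Poincar\'e inequalities; (G) from (eT); (H4) through the duality criterion of Lemma~\ref{lem:H4:suffcrit}, using the explicit $S^*AP$ (involving $\nabla\Sigma$ and third derivatives of $\phi_2$) and the a priori estimates of Theorem~\ref{thm:apriori_estimates}; and then the rate bookkeeping. The only real variation is that you carry $\tau^*\tau$ along instead of first reducing to $\tau=I$, $b=0$ via Appendix~\ref{app:details_coordinate_trafos}, and this works equally well.

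A few points of notation and detail need fixing. You have swapped $P$ and $P_S$ relative to (D5); the mean-free projection is the one that (H3) needs. Since $(AP)^*AP=-PA^2P$, the correct signs are $PA^2P=c\,T$ and $(AP)^*AP=-c\,T$. Finally, (G) is required for $(PA^2P,\core)$ on all of $H$, not just on $P(\core)$, which needs the short cutoff argument of Theorem~\ref{thm:G_selfadjoint} in addition to (eT).
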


    \begin{remark}
		\label{rem:assumptions}
		\begin{enumerate}[label={\roman*)}]
			\item Explicit growth conditions for $\phi_i$ that imply ($\Phi_i$2) can be found in \cite{Poincare} and \cite{PoincareConvex}. While the \Poincare inequality is stated there for $\ccinfty(\Omega_i)$, it extends to $C_c^2(\Omega_i)$ under our assumptions since $\mu_i$ is finite.
			\item Condition (eT) is significantly less restrictive than (eL) and admits explicit sufficient criteria, see Lemma \ref{lem:essT:suff}. We retain the abstract formulation for readability.
			\item Assumption ($\Sigma$2) can be weakened at the expense of a more complicated expression for the convergence rate. Instead it is, e.g., sufficient to assume that $\Sigma$ is continuous and weakly differentiable on $\Omega_2$, that $\nabla\Sigma\in L_\textup{loc}^2(\Omega_2, \dx)$ and that 
			$\vert \Sigma\vert \cdot \vert \nabla\phi_2\vert^2$, $\vert \Sigma\vert \cdot \vert \nabla^2\phi_2\vert$,
			$\vert \nabla\Sigma\vert \cdot \vert \nabla\phi_2\vert$, $\vert \nabla\Sigma\vert \cdot \vert \nabla^2\phi_2\vert$,
			$\vert \Sigma\vert \cdot \vert \nabla\phi_2\vert \vert \nabla^2\phi_2\vert$, $\vert \Sigma\vert \cdot \vert \nabla^3\phi_2\vert$ lie in $L^2(\Omega_2,\mu_2)$.
			\item If $\phi_1$ or $\phi_2$ has no singularities, i.e., $\{\phi_1<\infty\}=\mathbb{R}^{d_1}$ or $\{\phi_2<\infty\}=\mathbb{R}^{d_2}$, a different choice of cut-off functions below (as in \cite{Bertram_Grothaus_HypocoercivityLangevinMultipicativeNoise}) may allow for weaker integrability assumptions.
		\end{enumerate}
	\end{remark}
    We next state the version with explicit, verifiable hypotheses.
    
    \medskip
    \Needspace{3\baselineskip}
    \noindent\textbf{Further assumptions on the potentials.}
    \begin{assumptions}
        \item[($\Phi_1$5)] Potential $\phi_1$ has no singularities, i.e., $\Omega_1 = \{\phi_1<\infty\} = \mathbb{R}^{d_1}$.
        \item[($\Phi_2$6)] There are constants $K\in (0,\infty)$ and $\alpha\in [1,2)$ such that
		\begin{displaymath}
			\vert \nabla^2\phi_2(y)\vert \leq K(1+\vert\nabla\phi_2(y)\vert^\alpha) \qquad\text{for all }y\in\Omega_2.
		\end{displaymath}
		\item[($\Phi_2$7)] It holds $\vert\nabla\phi_2\vert^2 e^{-\phi_2} \in L^\infty(\Omega_2,\dx)$.
    \end{assumptions}
    
	\begin{corollary}[Explicit conditions]
		\label{thm:main:explicit}
		Let $d\defeq d_1 = d_2 \in \mathbb{N}$ with $d\geq2$, let $Q \in \mathbb{R}^{d\times d}$ be invertible and $\Sigma \in \mathbb{R}^{d\times d}$ be symmetric and positive definite. Denote its smallest eigenvalue by $\lambda_\textup{min}(\Sigma)$.
		Assume \textup{($\Phi_1$1)} - \textup{($\Phi_1$5)}, \textup{($\Phi_2$1)} - \textup{($\Phi_2$7)}.  Then $(\sL, \core)$ is closable, and its closure generates a strongly continuous contraction semigroup $(T_t)_{t\geq 0}$ on $L^2(E,\mu)$.
		 For each $C \in (1,\infty)$ there is $\lambda \in (0,\infty)$ such that
		\begin{equation*}
			\bigl\lVert T_t f - \textstyle\int\nolimits_{E} f \,\dmu \bigr\rVert_{L^2(E,\mu)} \leq C e^{-\lambda t} \bigl\lVert f - \textstyle\int\nolimits_{E} f \,\dmu \bigr\rVert_{L^2(E,\mu)} 
		\end{equation*}
		for all $t\geq 0$ and $f\in L^2(E,\mu)$. Furthermore,
		\begin{equation*}
			\lambda = \frac{C-1}{C} \frac{\lambda_\textup{min}(\Sigma)}{n_1 + n_2 \vert \Sigma \vert + n_3 \vert \Sigma \vert^2}
		\end{equation*}
		for constants $n_1, n_2, n_3 \in (0,\infty)$ that are explicitly computable and independent of $\Sigma$. 
	\end{corollary}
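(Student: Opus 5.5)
The corollary follows from Theorem \ref{thm:main:abstract} once its remaining hypotheses are checked in the present setting. The assumptions ($\Phi_1$1)--($\Phi_1$4), ($\Phi_2$1)--($\Phi_2$5) are already assumed. We take $d_1=d_2=d$ and $Q$ invertible, hence of full rank. The matrix $\Sigma$ is constant, symmetric and positive definite. What is left to verify is ($\Sigma$1), ($\Sigma$2), (eT) and (eL), and then to rewrite the rate.

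For constant $\Sigma$ the diffusion conditions are immediate. ($\Sigma$1) holds with $\Cell=\lambda_\textup{min}(\Sigma)$. ($\Sigma$2) holds because all derivatives of $\Sigma$ vanish, so $\Csigma=\max_{ij}|\Sigma_{ij}|\le|\Sigma|$. Since $n_2,n_3>0$, replacing $\Csigma$ by $|\Sigma|$ in the denominator only decreases $\lambda$, so the stated rate is admissible. Note also that $\divergence\Sigma=0$.

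For (eT), the operator $T$ is a constant-coefficient, uniformly elliptic (because $Q\tau^*\tau Q^*$ is positive definite) symmetric Ornstein--Uhlenbeck-type operator with respect to $\mu_1$ on $\Omega_1=\mathbb{R}^d$ by ($\Phi_1$5). I would invoke the sufficient criterion of Lemma \ref{lem:essT:suff}. Alternatively, one can argue directly: $\phi_1\in C^2(\mathbb{R}^d)$ and $|\nabla\phi_1|\in L^2(\mu_1)$, so the classical essential self-adjointness results for Dirichlet operators $\Delta-\nabla\phi\cdot\nabla$ with locally Lipschitz $\phi$ on $\mathbb{R}^d$ apply after a linear change of variables. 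These are Wielens-type arguments, using a cut-off together with local elliptic regularity.

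The main obstacle is (eL), the essential $m$-dissipativity of $(\sL,\core)$ with singular $\phi_2$; this is where ($\Phi_2$6), ($\Phi_2$7) and $d\ge2$ enter. My plan follows the known proofs for Langevin operators with multiplicative noise, as in \cite{Bertram_Grothaus_HypocoercivityLangevinMultipicativeNoise}, and proceeds in four steps.

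1. \emph{Regularise in $x$.} Since $\phi_1$ is nonsingular, first treat the case where $\nabla\phi_1$ is replaced by Lipschitz approximations. Then pass to the limit via perturbation, using ($\Phi_1$4) to control $|\nabla\phi_1|$ relative to $T$ (a Kato-type relative bound of $\nabla\phi_1\cdot\nabla_y$ by the antisymmetric part).

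2. \emph{Ground-state transform in $y$.} Conjugate by $e^{-\phi_2/2}$ with respect to Lebesgue measure, turning the $y$-part into a Schrödinger-type operator $\tr(\Sigma\nabla_y^2)-V$ with $V=\tfrac14|\sigma^*\nabla\phi_2|^2-\tfrac12\tr(\Sigma\nabla^2\phi_2)$. Condition ($\Phi_2$6) with $\alpha<2$ makes $V$ bounded below and comparable to $|\nabla\phi_2|^2$, which blows up at $\partial\Omega_2$. Condition ($\Phi_2$7) controls boundary terms.

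3. \emph{Dense range.} Show that $(\lambda-\sL)(\core)$ is dense. If $h\perp(\lambda-\sL)(\core)$, derive a weak equation for $h$. Then test with cut-offs $\chi_k(y)$ adapted to the sublevel sets $\Omega_{2,k}$, with $|\nabla\chi_k|$ controlled by $|\nabla\phi_2|$ times a small factor. The $d\ge2$ assumption is used in the capacity/cut-off argument showing that the boundary of $\Omega_2$ (or the singular set) is not seen, i.e.\ that the commutator terms vanish as $k\to\infty$. I expect this cut-off estimate to be the hardest part; I would first try functions of $\phi_2$ itself, $\chi_k=\eta(\phi_2/k)$.

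4. \emph{Conclude.} With (eL) established, Theorem \ref{thm:main:abstract} yields closability, generation of $(T_t)_{t\ge0}$, and the convergence estimate with the stated rate.
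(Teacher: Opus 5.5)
You have the right reduction, and it is the paper's. ($\Sigma$1) and ($\Sigma$2) hold with $\Cell=\lambda_{\textup{min}}(\Sigma)$ and $\Csigma\le|\Sigma|$, and your monotonicity remark makes the replacement of $\Csigma$ by $|\Sigma|$ in the rate rigorous. (eT) follows from Lemma \ref{lem:essT:suff} under ($\Phi_1$5), and Theorem \ref{thm:main:abstract} then applies.

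The gap is (eL), the only non-routine hypothesis. The paper does not prove it by hand; it cites Lemma \ref{lem:essL:suff}. There, the change of variables $(x,y)\mapsto(Q(\sigma^{-1})^*x,\sigma y)$ with $\Sigma=\sigma\sigma^*$ reduces $\sL$ to $\Delta_y-\nabla\phi_2\cdot\grady+\nabla\phi_2\cdot\gradx-\nabla\phi_1\cdot\grady$. Then \cite[Theorem 2.5]{Eberle_EssSelfAdjointSchroedingerOperators} gives essential self-adjointness of $(\Delta-\nabla\phi_2\cdot\nabla,\ccinfty(\Omega_2))$ on $L^2(\mu_2)$. This is exactly where $d\ge2$, $|\nabla\phi_2|\in L^4(\mu_2)$ from ($\Phi_2$3), and the Lipschitz property of $e^{-\phi_2/2}$ from ($\Phi_2$7) enter. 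Finally, \cite[Theorem 2.9]{Nonnenmacher_Grothaus_EssSelfAdjointGenerator} lifts this to the full degenerate operator, using only that $\phi_1$ is locally Lipschitz on $\mathbb{R}^d$ together with ($\Phi_2$6). You could have cited Lemma \ref{lem:essL:suff} just as you cited Lemma \ref{lem:essT:suff}.

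Your four-step sketch does not replace this argument.

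\textbf{Step 1.} This step fails as stated, because there is no Kato-type relative bound for $\nabla\phi_1\cdot\grady$.
\begin{itemize}
\item $\nabla\phi_1$ is unbounded, even super-exponentially so in Example \ref{ex:examples}~ii).
\item The rest of $\sL$ controls only the unweighted quantity $\Vert\grady f\Vert^2\le\Cell^{-1}(-\SPH{Sf}{f})$, never $\Vert\,|\nabla\phi_1|\,|\grady f|\,\Vert$.
\item Since there is no diffusion in $x$, Theorem \ref{thm:apriori_estimates} is of no use here: it concerns $T$ acting on functions of $x$ alone. In particular ($\Phi_1$4) plays no role in (eL).
\item Essential $m$-dissipativity does not survive a limit of drifts without uniform estimates. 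Supplying those estimates is exactly the delicate localisation argument in the cited work.
\end{itemize}

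\textbf{Step 3.} The actual density argument is left open. With $\chi_k=\eta(\phi_2/k)$, the commutator $[\sL,\chi_k]$ contains $k^{-1}|\nabla\phi_2|^2$, $k^{-1}|\nabla^2\phi_2|$ and the mixed term $k^{-1}|\nabla\phi_1(x)|\,|\nabla\phi_2(y)|$. These act on some $h\perp(\lambda-\sL)(\core)$ about whose $\grady$-regularity and weighted integrability nothing is known a priori. With only $C^2$ potentials, hypoellipticity is not available to supply it. The roles you assign to $d\ge2$ (an \enquote{capacity} argument) and to ($\Phi_2$7) (\enquote{boundary terms}) are guesses rather than arguments.

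As written, your proposal proves the corollary only conditionally on (eL).
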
	
    For comments on the one-dimensional case $d=1$, we refer to Remark~\ref{rem:one dimensional case}.

	\begin{example}
		\label{ex:examples}
        \begin{enumerate}[label={\roman*)}]
        \item Singular potentials of the form $\phi_2(y)=(R-\vert \tau y-b\vert^2)^{-\gamma}$ for $\gamma>0$, $R>0$ and invertible $\tau$ satisfy \textup{($\Phi_2$1)} - \textup{($\Phi_2$7)}. For admissible potentials $\phi_1$  the component $(Y_t)_{t\geq 0}$ of the associated diffusion process (as constructed in Theorem \ref{thm:associated_process}) has paths in the bounded ellipsoid $\{\phi_2 < \infty \}$.
     
        \item  The potential $\phi_1=\exp(\frac{1}{2}\vert \cdot\vert^2)+ \vert \cdot \vert^3$ satisfies assumptions \textup{($\Phi_1$1)} - \textup{($\Phi_1$5)}, but $\phi_1$ violates the growth condition \eqref{eq:growth:old} and has only $C^2$ regularity. Therefore, it lies in a growth--regularity regime not covered by previous results with exponential convergence rates, in particular \cite[Theorem 1]{P3_Camrud_LangevinSingularPotentials} and \cite[Theorem 1.2]{Bertram_Grothaus_JDE_generalizedLangevinDynamics}. 
        \end{enumerate}
    \end{example}

   Under a subset of the assumptions of Theorem \ref{thm:main:abstract}, we obtain the following stochastic representation of the abstract semigroup $(T_t)_{t\geq 0}$ studied therein.

    \begin{theorem}
        \label{thm:identification}
        Let $\sigma\colon \R^{d_2} \to \R^{d_2\times d_2}$ be bounded and locally Lipschitz and let $\Sigma :=\sigma \sigma^*$ satisfy $(\Sigma1)$ and $(\Sigma2)$. For $i=1,2$, let the potential $\phi_i\colon \R^{d_i} \to\R\cup \{+\infty\}$ satisfy $(\Phi_i1)$ and $(\Phi_i 3)$. Assume $($eL$)$ and let $(T_t)_{t\geq 0}$ denote the strongly continuous contraction semigroup on $L^2(E,\mu)$ generated by the closure of $(\sL, \core)$. Let $h\in L^2(\mu)$ be a probability density with respect to $\mu$.
        Then the unique weak solution to SDE \eqref{eq:sde:general} with initial distribution $h\,\textup{d}\mu$ is a conservative diffusion process with state space $\Omega_1\times \Omega_2$, and
        its transition kernels $(p_t)_{t\geq 0}$ induce the operator semigroup $(T_t)_{t\geq 0}$ on $L^2(\Omega_1 \times \Omega_2,\mu) \cong L^2(E,\mu)$.
    \end{theorem}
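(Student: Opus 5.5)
The plan is to construct a Markov process from the semigroup $(T_t)_{t\geq 0}$ via generalized Dirichlet form theory. We then show that this process solves the SDE weakly and identify it with the unique weak solution from Theorem \ref{thm:existence_uniqueness_weak_sol}.

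\textbf{Step 1: the generalized Dirichlet form.} By (eL), the closure $(\cL,D(\cL))$ generates $(T_t)_{t\geq 0}$ on $L^2(E,\mu)$. We first check that $\mu$ is invariant and that $(T_t)$ is sub-Markovian. Invariance comes from $\int \sL f\,\dmu=0$ for $f\in\core$; this is integration by parts using $(\Phi_i1)$, $(\Phi_i3)$ and $(\Sigma2)$, and no boundary terms appear because $f$ has compact support in $\Omega_1\times\Omega_2$. Sub-Markovianity follows from the Dirichlet-operator property $\int \sL f\,(f-1)^+\,\dmu\le 0$, which we verify on $\core$ (after smoothing $(f-1)^+$) and pass to the closure. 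The bilinear form $\mathcal{E}(f,g)=-(\cL f,g)$ is then a generalized Dirichlet form in the sense of Stannat on the state space $\Omega_1\times\Omega_2$. Its symmetric part is the Dirichlet form $\int(\Sigma\grady f,\grady g)\,\dmu$, and the antisymmetric part is the Hamiltonian transport.

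\textbf{Step 2: quasi-regularity and the associated process.} We show quasi-regularity on $\Omega_1\times\Omega_2$. The key point is that $\core$ consists of continuous, compactly supported functions in the domain, and is dense and point-separating. The nest is taken along the compact sublevel sets $\overline{\Omega_{1,k}}\times\overline{\Omega_{2,k}}$ from $(\Phi_i1)$, with cutoffs built from $\phi_i$. The existence theorem for generalized Dirichlet forms (Stannat, Trutnau) then yields a $\mu$-tight special standard process $\mathbb{M}$ on $\Omega_1\times\Omega_2$, augmented by a cemetery, whose transition kernels $p_t$ induce $T_t$. Continuity of paths follows from locality: $\cL$ is a differential operator on $\core$, so there is no jump part.

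\textbf{Step 3: conservativeness.} Since $\mu$ is $T_t$-invariant and $1\in D(\cL)$ with $\cL1=0$ (using that $\mu$ is finite and the approximation of $1$ by cutoffs), we get $T_t1=1$ $\mu$-a.e. Hence $p_t(z,\Omega_1\times\Omega_2)=1$ for quasi-every $z$, and the lifetime is infinite for $\mathbb{P}_{h\mu}$. This is the step I expect to be the main obstacle. Proving $\cL1=0$ requires cutoffs $\chi_k\in\core$ with $\chi_k\to1$ and $\sL\chi_k\to0$ in $L^2(\mu)$. The first-order terms $|\nabla\phi_i|\,|\nabla\chi_k|$ are controlled by $(\Phi_i3)$. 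The cutoff must be chosen as a function of $\phi_i$, for example $\chi_k=\eta(\phi_1/k)\eta(\phi_2/k)$, or with $\log\phi_i$ in place of $\phi_i$ to handle the second-order terms. Alternatively, one can avoid second derivatives by using the conservativeness criterion for generalized Dirichlet forms that only requires $\mathcal{E}(\chi_k,g)\to0$ for $g\in\core$. This criterion only involves $|\grady\chi_k|$ and the antisymmetric drift terms, which are bounded by $(\Phi_i3)$ through dominated convergence.

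\textbf{Step 4: identification.} For $f=x_i$ or $y_j$, localized by the nest, the processes $f(Z_t)-f(Z_0)-\int_0^t\sL f(Z_s)\,\textup{d}s$ are local martingales under $\mathbb{P}_{h\mu}$. We justify this by Fukushima-type decompositions, using the fact that $\int_0^t|\sL f|(Z_s)\,\textup{d}s<\infty$, which holds since $h\in L^2(\mu)$ and $\sL f\in L^2_{\mathrm{loc}}$. The quadratic variations are $2\int\Sigma(Y_s)\,\textup{d}s$ in $y$ and zero in $x$. By the martingale representation theorem with $\sigma$ invertible (from $(\Sigma1)$), we obtain a Brownian motion $B$ such that $(X,Y)$ solves SDE \eqref{eq:sde:general} with initial law $h\,\dmu$. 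By the uniqueness in Theorem \ref{thm:existence_uniqueness_weak_sol} and its extension to general initial laws (Theorem \ref{T:2.4}), this solution is the unique weak solution. Its transition kernels are those of $\mathbb{M}$ and therefore induce $(T_t)$.
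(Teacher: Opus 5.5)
Your proposal is correct and follows essentially the paper's route: an associated process from Stannat's existence theorem with the compact nest $\overline{\Omega_{1,k}}\times\overline{\Omega_{2,k}}$, continuous paths from locality, conservativeness from $1\in D(\cL)$ with $\cL 1=0$, localized coordinate martingales whose brackets $2\int_0^t\Sigma(Y_s)\,\textup{d}s$ yield a Brownian motion via $\sigma^{-1}$, and identification through the uniqueness in Theorem~\ref{T:2.4}. The differences are minor: the paper gets the martingale property directly from Conrad's lemma (Markov property plus $h\in L^2(\mu)$) rather than from a Fukushima decomposition, and the step you flag as the main obstacle is routine, since \textup{($\Phi_2$3)} puts $\vert\nabla\phi_2\vert^2$ and $\vert\nabla^2\phi_2\vert$ in $L^2(\mu_2)$, so the plain cutoffs $\chi_n(\phi_1)\chi_n(\phi_2)$ already give $\sL(\chi_n(\phi_1)\chi_n(\phi_2))\to 0$ in $L^2(\mu)$ by dominated convergence (and in any case invariance, sub-Markovianity and $\mu(E)=1$ already force $T_t1=1$).
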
 
    
    This article is organized as follows. In Section~\ref{sec:solution_sde} we prove existence and uniqueness of the weak solution to \eqref{eq:sde:general} for every starting point $(X_0, Y_0)=(x, y) \in \Omega_1 \times \Omega_2$. In Section~\ref{sec:hypocoercivity_abstract}, the abstract hypocoercivity method as developed in \cite{Dolbeault_Mouhot_Schmeiser_Hypocoercivity_article} and \cite{P1_Grothaus_Stilgenbauer_HypocoercivityKolmogorovBackward} is formulated. In Section \ref{sec:hypocoercivity_application}, we apply it to our setting to prove Theorem \ref{thm:main:abstract}. In Section~\ref{sec:crit_ess-m-diss}, we present sufficient conditions for assumptions (eL) and (eT), which allow us to conclude Corollary \ref{thm:main:explicit}. In Section~\ref{sec:stochastic_interpretation},   
    we identify the semigroup $(T_t)_{t\geq 0}$ in  Theorem \ref{thm:main:abstract} with the transition semigroup of the diffusion process determined by the unique weak solutions of SDE \eqref{eq:sde:general} from Theorem    \ref{thm:existence_uniqueness_weak_sol}.
    
    \section{Existence and Uniqueness of Weak SDE Solutions}
    \label{sec:solution_sde}
	
	In this section, we show that SDE \eqref{eq:sde:general} has a unique weak solution for every starting point $(X_0, Y_0)=(x, y)\in \Omega_1 \times \Omega_2$ under a set of conditions that are nearly minimal. To keep the ideas transparent, we do not attempt to identify the minimal conditions. Clearly, the Lipschitz assumption can be weakened to a suitable weak differentiability condition.
    
    \begin{standingassumptions}
        Throughout this section, let $\sigma\colon \R^{d_2} \to \R^{d_2\times d_2}$ be bounded and locally Lipschitz and let $\Sigma :=\sigma \sigma^*$ be uniformly elliptic. For $i = 1,2$, let $\phi_i\colon \R^{d_i} \to \R\cup \{+\infty\}$ be continuous in the extended sense and locally Lipschitz on $\Omega_i =\{\phi_i < \infty\}$. Moreover, assume that $Z_2:= \int_{\Omega_2} e^{- \phi_2 (y)} \,\dy <\infty$ and $\int_{\Omega_2} |\nabla \phi_2 (y)|^2 e^{-\phi_2 (y)} \,\dy<\infty$.
    \end{standingassumptions}
  
    Note that these assumptions are implied by $(\Sigma1), (\Sigma2), (\Phi_11), (\Phi_21)$ and $(\Phi_23)$.
      
    \begin{proof}[Proof of Theorem \ref{thm:existence_uniqueness_weak_sol}]
        Let $W^{1,2}(\R^{d_2}):=\{ f\in L^2(\R^{d_2}, \dx): \nabla f \in L^2(\R^{d_2}, \dx) \}$ and define 
    	\[
    	\sE^0(u, v) := \int_{\R^{d_2}} \nabla u (x) \cdot \Sigma (x) \nabla v(x) \,\dx
    	\quad \textup{for } u, v\in W^{1,2}(\R^{d_2}).
    	\]
    	Since $ \Sigma :=\sigma \sigma^*$ is  bounded and uniformly elliptic, $(\sE^0, W^{1, 2}(\R^{d_2})) $ is a strongly local regular Dirichlet form on $L^2(\R^{d_2}, \dx)$. Its $L^2$-infinitesimal generator is the elliptic operator of divergence form $\sL^0 := \divergence (\Sigma \nabla)$ in the distributional sense. See \cite{CF12, FOT11} for these facts
    	and the terminologies. 
        By a celebrated result of Aronson \cite{Ar68}, the uniformly elliptic $\sL^0$ has a jointly H\"older continuous fundamental solution $p(t, x, y)$ on $(0, \infty)
    	\times \R^{d_2}\times \R^{d_2}$ that satisfies a two-sided Gaussian estimate and has the property $\int_{\R^{d_2}} p(t, x, y) \,\dy=1$ for every $t>0$ and $x\in \R^{d_2}$. It follows that there is a conservative Feller process $Y^0$ on $\R^{d_2}$ that has transition density function $p(t, x, y)$. 
        The Feller process $Y^0$ is symmetric with respect to the Lebesgue measure on $\R^{d_2}$, whose associated Dirichlet form on $L^2(\R^{d_2}, \dx)$  is $(\sE^0, W^{1, 2}(\R^{d_2}))$. 
        
    	Denote the law of the process $Y^0$  starting from $y$ by   $\P^0_y$. Since the coordinate function $x_i$ on $\R^{d_2}$	is locally in $W^{1,2}(\R^{d_2})$, we see from \cite[Theorems 5.5.1,  5.5.5 and Example 5.5.3]{FOT11} that there is a $d_2$-dimensional continuous martingale additive functional $B$ of $Y^0$ which is a Brownian motion on $\R^{d_2}$ under $\P^0_y$ for every $y\in \R^{d_2}$, and that under $\P^0_y$, 
    	\begin{equation} 
            \label{e:2.1}
            \textup{d}Y^0_t  = \divergence \Sigma  (Y^0_t) \,\textup{d}t + \sqrt{2} \sigma(Y^0_t) \,\textup{d}B_t \quad \textup{with } Y^0_0=y. 
    	\end{equation} 
      	Since $\sigma$ is bounded and locally Lipschitz and $\Sigma$ is uniformly elliptic on $\R^{d_2}$, 
    	it is well known via the Girsanov transform \cite[Theorems IV.2.2 and IV.4.2]{IW89} that for every $y\in \Omega_2$, there is a unique weak solution $\wt Y^0$ on $\R^{d_2}$ to SDE \eqref{e:2.1} up to the lifetime $\zeta^0:=  \lim_{k\to \infty} \inf\{t>0: \wt Y^0_t \notin B(0, k)\}$. 
    	Since $Y^0$ is a conservative weak solution to \eqref{e:2.1}, it follows that $\zeta^0 =\infty$ and $Y^0$ and $\wt  Y^0$ have the same distribution. Consequently, for every $y\in \R^{d_2}$, SDE \eqref{e:2.1} has a unique conservative weak solution that coincides with the Feller process $Y^0$ under $\P^0_y$.
    
    	Define  $\psi_2 (y) := Z_2^{-1/2} e^{-\phi_2 (y)/2}$ for $y\in \R^{d_2}$.
    	  Note that $\psi_2$ is continuous on $\R^{d_2}$ with  $\psi_2=0$ on $\Omega_2^c = \{ \phi_2= \infty\}$, and that $\psi_2 \in W^{1,2} (\Omega_2)$ as 
    	\begin{equation*}
            y \mapsto \nabla \psi_2 (y) =-  Z_2^{-1/2} e^{-\phi_2 (y)/2} \nabla \phi_2 (y) /2 \in L^2(\Omega_2, \dy).    
    	\end{equation*}
        In fact, $\psi_2 \in W^{1,2}_0(\Omega_2)\subset W^{1,2}(\R^{d_2})$. Indeed, for $n\geq 1$, define $v_n = (\psi_2-1/n)^+$.
        Then $v_n\in W^{1,2}(\Omega_2)$ with $\hbox{supp} [v_n] \subset \Omega_2 $ and so  $v_n\in W^{1,2}_0(\Omega_2)$. Since
        \begin{equation*}
            \lim_{n\to \infty} \left( \| \nabla (v_n - \psi_2) \|_{L^2( \Omega_2)}  +  \|  v_n - \psi_2 \|_{L^2( \Omega_2)} \right) =0,    
        \end{equation*}
        it follows that $\psi_2 \in W_0^{1,2} (\Omega_2)$.
        
        Define $M^0_t := - 2^{-1/2}\int_0^t  ( \sigma^* \nabla \phi_2 ) (  Y_s^0) \,\textup{d}B_s$ for $t<\tau^0_{\Omega_2}:= \inf\{s>0: Y^0_s \notin \Omega_2\}$,
        which is a continuous local martingale on the random time interval $[0, \tau^0_{\Omega_2})$. 
        More precisely, let $(D_k)_{k\geq 1}$ be a sequence of relatively compact open subsets of $\Omega_2$ that increases to $\Omega_2$ as $k\to \infty$. Then $\tau_k^0:=  \inf\{s>0: Y^0_s \notin D_k\}$ increases to $\tau^0_{\Omega_2}$ as $k\to \infty$ and, for each $k\geq 1$, $t\mapsto M^0_{t\wedge \tau^0_k}$ is a square integrable martingale. 
        
        Denote by $\mathrm{Exp} (M^0)$ the exponential local martingale of $M^0$ on $[0, \tau^0_{\Omega_2})$, and by $(\sF^0_t)_{t\geq 0}$ the minimal augmented filtration generated by the symmetric diffusion process $Y^0$.
        For each $y\in \Omega_2$, define a probability measure $\bar \P_y$ on 
        $\sF^0_{\tau^0_{\Omega_2}-}:=\sigma ( \cup_{k\geq 1} \sF^0_{\tau^0_k} )$ by 
        \[
            \frac{\textup{d} \bar \P_y}{\textup{d} \P^0_y} = \hbox{Exp} (M^0 )_{\tau^0_k} \quad \hbox{on } \sF^0_{\tau^0_k} 
        \]
        for each $k\geq 1$. 
        It is known as a special case of Theorem 2.6 of \cite{CFTYZ_2004} that the Girsanov transformed process $\bar Y:=( (Y^0_t)_{t\geq 0}, (\bar \P_y)_{y\in \Omega_2} )$ is a (conservative) recurrent $\mu_2$-symmetric diffusion process taking values in $\Omega_2$, whose Dirichlet form $(\sE, \sF)$ on $L^2(\Omega_2, \mu_2)$ is given by 
    	\begin{eqnarray*}
        	\sE (u, v) &:=& \int_{\Omega_2} \nabla u(y)  \cdot \Sigma (y) \nabla v(y) \,d\mu_2 (y) \quad \hbox{for } u, v\in \sF,
        	\  \hbox{ where } \\
        	\sF &:=& \hbox{the completion of } C^2_c (\Omega_2) \hbox{ with respect to } \sqrt{\sE_1},
    	\end{eqnarray*}
    	where $\dmu_2 (y)= \psi_2(y)^2 \,\dy$ and $\sE_1 (u, u):= \sE (u, u)+ \int_{\Omega_2} u(y)^2 \,\dmu_2 (y)$. 
    	For emphasis, we denote the Girsanov transform of $Y^0$ under $\bar \P_y$ by $\bar Y$.  
    	By the Girsanov theorem, $\bar Y$ satisfies 
    	\begin{equation} \label{e:2.2}
            \textup{d} \bar Y_t  = (\divergence \Sigma  - \Sigma \nabla\phi_2)(\bar Y_t) \,\textup{d}t + \sqrt{2} \sigma( \bar Y_t) \,\textup{d} \bar B_t
        \end{equation} 
        for some Brownian motion $\bar B$ on $\R^{d_2}$. Since SDE \eqref{e:2.1} has a unique weak solution, we have by \cite[Theorem IV.4.2]{IW89} and a localization argument that SDE \eqref{e:2.2} also has a unique weak solution.
        
        For $x\in \R^{d_1}$ and $t\geq 0$, define  
        \[
            X^x_t := x+ \int_0^t Q \nabla \phi_2 ( \bar Y_s) \,\textup{d} s.
        \]
       Since $\bar Y$ is conservative with continuous paths in $\Omega_2$, where $\nabla \phi_2$ is locally bounded, $X^x$ is well defined, continuous, and adapted to the minimal augmented filtration $(\bar{\sF}_t)_{t\geq 0}$ generated by $\bar Y$.
        Let $(U_k)_{k\geq 1}$ be a sequence of relatively compact subdomains of $\Omega_1$ that increases to $\Omega_1 = \{x\in \R^{d_1}: \phi _1 (x) < \infty\}$.   Then,  $T_k \defeq \inf\{ t>0: X^x_t \notin U_k\}$ is a $(\bar{\sF}_t)_{t\geq 0}$-stopping time.
    	Set $ \bar{\zeta} := \lim_{k\to \infty} T_k=\inf\{t> 0: X^x_t\notin \Omega_1\}$, and define
    	\[
            M^x_t\defeq  -2^{-1/2}  \int_0^t \sigma(\bar{Y}_s)^{-1} Q^* \nabla \phi_1 (X^x_s) \,\textup{d} \bar B_s \quad \hbox{for } t\in [0,  \bar{\zeta}).
        \]
        Since $\sigma^{-1}$ is bounded and $\nabla\phi_1$ is bounded on each $U_k$, $t\mapsto M^x_{t\wedge T_k}$ is a square integrable martingale under $\bar \P_y$ for every $k\geq 1$, and so $M^x$ is a continuous local martingale on $[0,  \bar\zeta)$ under $\bar \P_y$.
    	Again, denote by $\hbox{Exp} (M^x)$ the exponential local martingale of $M^x$ on $[0,  \bar\zeta )$. It induces a probability measure
    	$\P_{x, y}$ on $ \bar\sF_{\bar\zeta -}  :=\sigma (\cup_{k\geq 1}  \bar\sF_{T_k})$ by 
    	  \begin{equation} \label{e:2.3} 
    	   \frac{\textup{d}\P_{x, y}}{\textup{d}\bar \P_y}= \hbox{Exp} (M^x)_{T_k} \quad \hbox{on } \bar \sF_{T_k}
    	  \end{equation}
    	for each $k\geq 1$. By the Girsanov theorem, under $\P_{x, y}$, $\bar Y$ satisfies for some Brownian motion $W$:
    	  \[
          \textup{d} \bar Y_t = (\divergence \Sigma - \Sigma \nabla\phi_2)(\bar Y_t) \,\textup{d}t - Q^*\nabla\phi_1(X_t^{x}) \,\textup{d}t + \sqrt{2} 
    	   \sigma(\bar Y_t) \,\textup{d}W_t \quad \hbox{for  }  t\in [0, \bar\zeta )  
        \]
        with  $\bar Y_0=y.$
    	For emphasis, write $Y$ for $\bar Y$ under $\P_{x, y}$ and $X$ for $X^x$ under $\P_{x, y}$. Then $(X,Y)$ is a weak solution for the Langevin SDE \eqref{eq:sde:general} on $[0, \bar \zeta)$ starting in $(x,y)$.
    
        The weak solution to \eqref{eq:sde:general} is unique in law. Suppose $(\wt X, \wt Y)$ is another 
    	weak solution to \eqref{eq:sde:general} starting in $(x,y)$ whose underlying probability measure is denoted by $\wt \P_{x, y}$. Denote the corresponding Brownian motion by $\widetilde{B}$, let $(\widetilde{\sF}_t)_{t\geq 0}$ be the underlying filtration of this weak solution, and set $\widetilde{T}_k\defeq \inf \{ t>0 : \widetilde{X}_t \notin U_k \}$. Then we can do an \enquote{inverse} Girsanov transform to \eqref{e:2.3} by 
    	\[
            \frac{\textup{d} Q_{x, y}}{\textup{d} \wt \P_{x, y}} = \hbox{Exp} (\wt M)_{ \widetilde T_k} \quad \hbox{on } 
             \widetilde\sF_{\widetilde{T}_k}
        \]
    	for every $k\geq 1$, where $\wt M_t:= 2^{-1/2}  \int_0^t   \sigma^{-1}(\widetilde Y_s) Q^* \nabla \phi_1  (\wt X_s)\,\textup{d} \widetilde B_s$. By \cite[Theorem IV.4.2]{IW89}, under $Q_{x, y}$, $\wt Y$ is a weak solution to SDE \eqref{e:2.2}. By the weak uniqueness of \eqref{e:2.2}, we know that $\wt Y$ has the same distribution as $Y$. Moreover, $X$ and $\widetilde{X}$ are determined by $Y$ and $\widetilde{Y}$, respectively, through the first line of \eqref{eq:sde:general}. Consequently, after transforming back, $(\wt X, \wt Y)$ has the same distribution as $(X,Y)$.
    \end{proof}

	\begin{remark}  
        \begin{enumerate}[label={\roman*)}]
            \item The proof of Theorem \ref{thm:existence_uniqueness_weak_sol}  
             shows that the lifetime of the weak solution is determined by the $x$-component: as long as $X_t$ stays inside $\Omega_1$, $(X_t, Y_t)$ can continue moving inside $\Omega_1 \times \Omega_2$.
            \item If in addition \Poincare inequality condition ($\Phi_2$2) holds, the $\mu_2$-symmetric diffusion $\bar Y$ in the proof of Theorem \ref{thm:existence_uniqueness_weak_sol} is exponentially ergodic.
            \item The proof of Theorem \ref{thm:existence_uniqueness_weak_sol} does not require $Q$ to be constant. It yields unique weak solutions under the assumption that $Q$ is measurable and locally bounded on $\Omega_1 \times \Omega_2$.
        \end{enumerate}
	\end{remark}

    Collectively, by the weak existence and uniqueness of SDE \eqref{eq:sde:general}, the diffusion process $((X_t, Y_t)_{t\in [0, \zeta)}, (\P_{x, y})_{(x, y) \in \Omega_1\times \Omega_2})$ constructed above forms a continuous strong Markov process taking values in $\Omega_1 \times \Omega_2$ with possible explosion time $\zeta$; see, e.g.,  \cite[Theorem V.4.20]{KS91}. We now pass to the canonical path-space formulation.
    
    Denote by $(\Omega_1\times \Omega_2)_\partial := (\Omega_1\times \Omega_2 )\cup\{\partial\}$ the one-point compactification of $\Omega_1\times \Omega_2$, and define the path space 
    \[
        \sZ \defeq \{\omega \in C([0, \infty); (\Omega_1\times \Omega_2)_\partial ) : \omega(t)=\partial \textup{ for all }t\geq \zeta(\omega) \},
    \]
    where $\zeta(\omega)\defeq \inf \{t\geq 0 : \omega(t)=\partial \}$. 
    Set $\sB (\sZ)\defeq \sigma( \omega (t):t\geq 0)$. Let $\sF_t$ be the $\sigma$-field generated by $\{\omega (s): 0\leq s\leq t\}$ and $\sF_{t+}:=\cap_{s>t} \sF_s$.
    For each $(x, y) \in \Omega_1\times \Omega_2$, we denote by $\P_{(x, y)} $ the probability law induced on $\sZ$ by 
    the unique weak solution $(X_t, Y_t)_{t\in [0, \zeta)}$ of SDE \eqref{eq:sde:general} with $(X_0, Y_0)=(x, y)$.

    From now on, we use the same symbols $(X_t, Y_t)_{t\geq 0}$ for the coordinate process on $\sZ$, i.e. $(X_t,Y_t)(\omega)\defeq \omega(t)$, where the individual coordinates $X$ and $Y$ are only well-defined up to the lifetime.
    
	\begin{definition} 
        A probability measure $\P$ on $\sZ$ is said to be a solution to the martingale problem associated with $(\sL, \core)$ if for every $f\in \core=C^2_c(\Omega_1\times \Omega_2)$
        \[
            M^f_t:= f(X_t , Y_t ) - f(X_0, Y_0) - \int_0^t \sL f (X_s, Y_s) \,\textup{d}s , \quad t\geq 0, 
        \]
       where $f(\partial)\defeq 0$ and $(\sL f)(\partial)\defeq  0$, is a continuous martingale under $\P$ with respect to the filtration $(\sF_{t+})_{t\geq 0}$.
	\end{definition}
    
    By It\^o's formula, one concludes that $\P_{(x, y)}$ is a solution to the martingale problem associated with $(\sL, \core)$.
	In fact, by \cite[Corollary 5.4.9 and Proposition 5.4.11]{KS91} and their proofs, $\P_{(x, y)}$ is the unique
	solution to the martingale problem associated with $(\sL, \core)$  with initial condition $\P ((X_0, Y_0)=(x, y))=1$. 
	
	\begin{theorem} \label{T:2.4} 
	For any probability measure $\nu$ on $\Omega_1\times \Omega_2$, there is a unique solution to the martingale problem  associated with $(\sL, \core)$
	with initial distribution $\nu$. Moreover, SDE  \eqref{eq:sde:general}  has a unique weak solution $(X, Y)$ 
	with initial distribution $\nu$ up to the lifetime $\zeta:= \inf\{t>0: (X_t, Y_t) \notin \Omega_1 \times \Omega_2\}$.
	\end{theorem}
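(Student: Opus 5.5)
The plan is to reduce everything to the point-mass case, where the preceding discussion already gives weak existence, uniqueness, and uniqueness for the martingale problem. The reduction uses the standard mixture and disintegration arguments, as in \cite[Section 5.4]{KS91}.

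\emph{Existence.} First I check measurability: the map $(x,y)\mapsto \P_{(x,y)}(A)$ must be Borel on $\Omega_1\times\Omega_2$ for every $A\in\sB(\sZ)$. This follows from uniqueness of the martingale problem for point masses, via the classical argument of \cite[Exercise 5.4.25/Theorem 5.4.20 and their proofs]{KS91}. The set of solutions of the martingale problem with all possible initial points is a measurable subset of the Polish space of probability measures on $\sZ$. Uniqueness then lets one select $\P_{(x,y)}$ measurably. Alternatively, one can use the strong Markov process structure already recorded after Theorem \ref{thm:existence_uniqueness_weak_sol} (via \cite[Theorem V.4.20]{KS91}), which includes measurability of $(x,y)\mapsto\P_{(x,y)}$.

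With measurability in hand, set $\P_\nu\defeq\int \P_{(x,y)}\,\nu(\textup{d}(x,y))$. For $f\in\core$, $M^f$ is a martingale under each $\P_{(x,y)}$, and $f$ and $\sL f$ are bounded. Hence for $s<t$ and bounded $\sF_{s+}$-measurable $G$, the identity $\mathbb{E}[(M^f_t-M^f_s)G]=0$ integrates over $\nu$. So $\P_\nu$ solves the martingale problem with initial law $\nu$. The corresponding statement for the SDE follows in the same way: on the canonical space, mixing the weak solutions gives a weak solution with initial law $\nu$ up to $\zeta$. Concretely, the Brownian motion can be reconstructed from $Y$ through $\sigma^{-1}$, since $\Sigma$ is uniformly elliptic and $\sigma$ is invertible and bounded. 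Equivalently, one can use the equivalence between the martingale problem and weak solutions \cite[Proposition 5.4.6, Corollary 5.4.9]{KS91}, localized by the stopping times $T_k$ at which the process exits $U_k\times D_k$.

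\emph{Uniqueness.} Let $\P$ solve the martingale problem with initial law $\nu$. Take a regular conditional probability $\P^\omega$ of $\P$ given $\sF_0$, which exists because $\sZ$ is Polish. The main obstacle is to show that for $\nu$-a.e.\ starting point, $\P^\omega$ again solves the martingale problem. This needs a countable family $\{f_n\}\subset\core$, rational times $s<t$, and a countable generating class of $\sF_{s}$-sets, so that the exceptional null sets can be collected. Such a family exists because $C_c^2(\Omega_1\times\Omega_2)$ is separable in the $C^2$-norm on each compact $\overline{U_k\times D_k}$. Right-continuity of $M^f$ in $t$ then extends the martingale identity to all times and to $\sF_{s+}$.

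Having done this, $\P^\omega$ solves the martingale problem started at the point $\omega(0)$. Point-mass uniqueness gives $\P^\omega=\P_{\omega(0)}$ for $\P$-a.e.\ $\omega$, hence $\P=\P_\nu$. Uniqueness in law for the SDE follows, because every weak solution induces a solution of the martingale problem via Itô's formula.
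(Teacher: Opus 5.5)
Your proposal is correct and follows the same route as the paper. Both arguments use Borel measurability of $(x,y)\mapsto\P_{(x,y)}$ from point-mass well-posedness, the mixture $\P_\nu$ for existence, a regular conditional probability given the initial value for uniqueness, and the martingale-problem/weak-solution equivalence of \cite{KS91} with localization for the SDE statement. The only difference is that you sketch the standard arguments the paper cites: \cite[Theorem IV.5.1]{IW89} for measurability, and \cite[Theorem 6.1.3]{SV06} for the fact that the conditional laws again solve the martingale problem.
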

	
	\begin{proof}  As already noted above, for every $(x, y)\in \Omega_1\times \Omega_2$, the martingale problem
	associated with $(\sL, \core)$ has a unique solution $\P_{(x, y)}$ with initial value $(x, y)$. Thus by \cite[Theorem IV.5.1]{IW89},
	$(\P_{(x, y)})_{(x, y)\in \Omega_1\times \Omega_2}$ is a system of diffusion measures generated by $(\sL, \core)$
	in the sense of \cite[Definition IV.5.3]{IW89}.
    In particular, $(x, y) \mapsto \P_{(x, y)}(A)$ is Borel measurable for every $A\in \sB (\sZ)$. 
	For a probability measure $\nu$ on 	$\Omega_1\times \Omega_2$, 
	  define $\P_\nu:= \int_{\Omega_1\times \Omega_2} \P_{(x, y)} \,\textup{d}\nu(x,y)$. 
	  By  Fubini's theorem,  $\P_\nu$ is a solution to the martingale problem associated with
	  $(\sL, \core)$ with initial distribution $\nu$.  
      
    To establish uniqueness, suppose $\P$ is a solution to the martingale problem associated with $(\sL, \core)$ with initial distribution $\nu$. 
	Denote by $(\wt \P_{(x, y)})_{(x, y)\in \Omega_1 \times \Omega_2}$ the regular conditional probability
	of $\P$ given $(X_0, Y_0)$.  By \cite[Theorem 6.1.3]{SV06}, $\wt \P_{(x, y)}$ is a solution to the martingale problem	associated with $(\sL, \core)$  with initial value $(x, y)$ for  $\nu$-almost every $(x,y)\in \Omega_1 \times \Omega_2$. By uniqueness of solutions to this martingale problem, 
	$\wt \P_{(x, y)} = \P_{(x, y)}$ for $\nu$-almost every $(x, y)\in \Omega_1 \times \Omega_2$ and so $\P = \int_{\Omega_1\times \Omega_2} \wt \P_{(x, y)} \,\textup{d}\nu (x, y) =\P_\nu$. 
	This establishes existence and uniqueness of solutions to the martingale problem associated with $(\sL, \core)$ with initial distribution $\nu$.
	
	Now, by  Corollary 5.4.9 and Proposition 5.4.11 (and a localization argument in its proof) of \cite{KS91}, SDE \eqref{eq:sde:general} has a unique weak solution with initial distribution $\nu$ if and only if the martingale problem associated with $(\sL, \core)$ with initial distribution $\nu$ has a unique solution. This completes the proof of the theorem. 
	\end{proof} 
	
    Having established unique weak solutions to SDE \eqref{eq:sde:general}, we next study their long-time behavior. As outlined in the introduction, we use an analytic approach. 

	\section{The Hilbert Space Hypocoercivity Method}
	\label{sec:hypocoercivity_abstract}
	
	We recall the Hilbert space hypocoercivity method as formulated by Grothaus and Stilgenbauer in \cite[Section 2]{P1_Grothaus_Stilgenbauer_HypocoercivityKolmogorovBackward} and \cite[Section 2]{P2_Grothaus_Stilgenbauer_LangevinRevisited}. The framework is given by the following assumptions.
	\begin{datacon}
		\leavevmode\newline
		\vspace{-\baselineskip}
		\begin{itemize}[leftmargin = 1cm]
			\item[(D1)] Let $H$ denote the real Hilbert space $L^2(E,\mu)$ for some probability space $(E,\mathcal{F},\mu)$.
			\item[(D2)] Let $(T_t)_{t\geq 0}$ be a strongly continuous semigroup of bounded linear operators on $H$, and let the linear operator $(\cL,D(\cL))$ be its generator.
			\item[(D3)] Let $\core$ be a dense subspace of $H$ and an operator core of $(\cL,D(\cL))$.
			\item[(D4)] Let $(S,D(S))$ be a symmetric operator and $(A,D(A))$ be a closed antisymmetric operator on $H$ such that $\core \subset D(S)\cap D(A)$ and the decomposition $\cL=S-A$ holds on $\core$.
			\item[(D5)] Let $P$ be an orthogonal projection on $H$ such that $P(H) \subset D(S)$, $SP=0$ and $P(\core) \subset D(A^2)$. Let $P_S\colon H\to H$ be defined by $P_Sf= Pf +\SPH{f}{1}$ for $f\in H$.
		
			\item[(D6)] The measure $\mu$ is invariant for $(\sL,\core)$, that is, $\int_{E} \cL f \,\dmu = 0$ for all $f\in \core $.
			
			\item[(D7)] The semigroup $(T_t)_{t\geq 0}$ is conservative, or equivalently,  $1\in D(\cL)$ and $\cL 1=0$.
		\end{itemize}
	\end{datacon}

	In this setting, $A$ is closed, $P$ is bounded, and $\core \subset D(AP)$, so $(AP,D(AP))$ is closed and densely defined. Thus, there is a unique bounded linear operator $B$ on $H$ that extends $((I+(AP)^*(AP))^{-1}(AP)^*,D((AP)^*))$. This is due to the following well-known result from operator theory. For lack of a precise reference, we provide a proof in Appendix \ref{app:deferred_proofs}.

	\begin{proposition}
		\label{prop:vonNeumann}
		Let $(T,D(T))$ be a closed and densely defined operator on a Hilbert space $H$. Then $I+T^*T\colon D(T^*T)\to H$ is bijective and $((I+T^*T)^{-1}T^*, D(T^*))$ admits a unique bounded linear extension $B: H\to H$. It satisfies $\Vert B \Vert \leq 1$.
	\end{proposition}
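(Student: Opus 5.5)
The plan is to prove bijectivity of $I+T^*T$ first and then obtain $B$ from an explicit formula, namely $B = T^{*}$-composed resolvent identity $(I+T^*T)^{-1}T^* \subset T(I+TT^*)^{-1}$ read in reverse. Concretely, I will prove the operator identity
\begin{equation*}
(I+T^*T)^{-1}T^*g = T^*(I+TT^*)^{-1}g \qquad\text{for all } g\in D(T^*),
\end{equation*}
and then show that the right-hand side extends to a contraction on all of $H$.

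Step 1 is the standard von Neumann argument. Since $T$ is closed, its graph $G(T)\subset H\oplus H$ is closed. The orthogonal complement of $G(T)$ is $\{(-T^*v,v): v\in D(T^*)\}$. Given $h\in H$, I decompose $(h,0)=(u,Tu)+(-T^*v,v)$. This yields $v=-Tu\in D(T^*)$ and $h=u+T^*Tu$, so $I+T^*T$ is surjective. Injectivity follows from
\begin{equation*}
\SP{(I+T^*T)u}{u}{} = \Vert u\Vert^2+\Vert Tu\Vert^2 .
\end{equation*}
This identity also gives $\Vert (I+T^*T)^{-1}\Vert\le 1$ and $\Vert T(I+T^*T)^{-1}h\Vert^2\le \SP{h}{(I+T^*T)^{-1}h}{}\le\Vert h\Vert^2$. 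Since $T$ is closed and densely defined, $T^*$ is closed and densely defined with $T^{**}=T$, so everything applies equally to $T^*$. In particular, $I+TT^*$ is bijective and $C\defeq T^*(I+TT^*)^{-1}$ is an everywhere defined operator with $\Vert C\Vert\le1$.

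Step 2 verifies the identity. For $g\in D(T^*)$, set $w=(I+TT^*)^{-1}g$. Then $w\in D(TT^*)$, so $T^*w\in D(T)$, and $g=w+TT^*w$. Since $g$ and $w$ lie in $D(T^*)$, we get $TT^*w = g-w \in D(T^*)$, hence $T^*w\in D(T^*T)$. Applying $T^*$ then gives $T^*g = (I+T^*T)T^*w$, that is, $(I+T^*T)^{-1}T^*g=T^*w=Cg$.

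Therefore $(I+T^*T)^{-1}T^*$ agrees with the bounded operator $C$ on the dense set $D(T^*)$. Consequently $B\defeq C$ is a bounded extension with $\Vert B\Vert\le1$. Uniqueness follows from density of $D(T^*)$, since two bounded operators that coincide on a dense set are equal.

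I expect the main obstacle to be the domain bookkeeping in Step 2, in particular checking $TT^*w\in D(T^*)$. Everything else is routine.
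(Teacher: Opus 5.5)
Your proof is correct and follows essentially the same route as the paper. The paper cites Conway for von Neumann's theorem (bijectivity of $I+T^*T$ and $\Vert T^*(I+TT^*)^{-1}\Vert\le 1$) rather than reproving it, then checks $(I+T^*T)^{-1}T^* = T^*(I+TT^*)^{-1}$ on $D(T^*)$ via the one-line identity $(I+T^*T)^{-1}T^*(I+TT^*)(I+TT^*)^{-1} = (I+T^*T)^{-1}(I+T^*T)T^*(I+TT^*)^{-1}$ and concludes by density. That identity implicitly needs $TT^*w\in D(T^*)$, which is exactly the domain check your Step 2 makes explicit; the only slip is in your opening sentence, which writes $T(I+TT^*)^{-1}$ where $T^*(I+TT^*)^{-1}$ is meant.
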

   
	Within this framework, we can define the following hypocoercivity conditions.
	
	\begin{hypocon}
		\leavevmode\newline
		\vspace{-\baselineskip}
		\begin{hypocoercivityassumptions}
			\item[(H1)] It holds $PAP=0$ on $\core $.
			\item[(H2)] There is a constant $\Lambda_m>0$ such that
			\begin{equation}
				\label{eq:H2:abstract}
				-\SPH{Sf}{f} \geq \Lambda_m \Vert (I-P_S) f\Vert_H^2\qquad\text{for all }f\in \core .
			\end{equation}
			\item[(G)] The linear operator $(G,\core )\defeq (PA^2P,\core )$ is essentially self-adjoint on $H$.
			\item[(H3)] There is a constant $\Lambda_M>0$ such that
			\begin{equation}
				\label{eq:H3:abstract}
				\Vert APf \Vert_H^2 \geq \Lambda_M \Vert Pf\Vert_H^2\qquad\text{for all }f\in \core .
			\end{equation}
			\item[(H4)] There are constants $N_1, N_2 >0$ such that
            \begin{align}
                \label{eq:H4i:abstract}
				\Vert BS(I-P)f \Vert_H &\leq N_1 \Vert (I-P)f \Vert_H \qquad\text{for all }f\in \core,  \\
                \label{eq:H4ii:abstract}
		          \Vert BA(I-P)f \Vert_H &\leq N_2 \Vert (I-P)f \Vert_H\qquad\text{for all }f\in \core .
            \end{align}            
		\end{hypocoercivityassumptions}
	\end{hypocon}

    Under these conditions, there exists a modified entropy functional, equivalent to the $L^2$-distance to equilibrium, satisfying a coercive dissipation estimate. This leads to the following convergence theorem, see \cite[Theorem 2.18 and Corollary 2.13]{P1_Grothaus_Stilgenbauer_HypocoercivityKolmogorovBackward}. 
	\begin{theorem}
		\label{thm:hypocoercivity:abstract}
		If the data conditions \textup{(D)} and hypocoercivity conditions \textup{(H)} are satisfied, there are constants $\kappa_1,\kappa_2\in (0,\infty)$ such that
		\begin{equation}
			\label{eq:conv:abs}
			\Vert T_t f - \SPH{f}{1} \Vert_{H} \leq \kappa_1 e^{-\kappa_2 t} \Vert f - \SPH{f}{1} \Vert_{H} \qquad\text{for all }f\in H\text{ and }t\geq 0.
		\end{equation}
		Furthermore, $\kappa_1$ and $\kappa_2$ are explicitly computable in terms of $\Lambda_m$, $\Lambda_M$, $N_1$ and $N_2$. Specifically, if $\varepsilon, \delta \in (0,1)$ and $\kappa \in (0,\infty)$ satisfy
        \begin{equation}
            \label{eq:convrate:con1and2}
            \Lambda_m
            - \varepsilon \,(N_1+N_2+1)
              \bigl(1+\tfrac{1}{2\delta}\bigr)
            \geq \kappa
            \quad\textup{and}\quad
            \varepsilon\,
            \bigl(
              \tfrac{\Lambda_M}{1+\Lambda_M}
              - \tfrac{\delta}{2}(N_1+N_2+1)
            \bigr)
            \geq \kappa,
        \end{equation}
		then \eqref{eq:conv:abs} holds for $\kappa_1 = (\frac{1+\varepsilon}{1-\varepsilon})^{1/2}$ and $\kappa_2 = \frac{\kappa}{1+\varepsilon}$.
	\end{theorem}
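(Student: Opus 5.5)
We prove Theorem~\ref{thm:hypocoercivity:abstract} with the modified entropy method of Dolbeault, Mouhot and Schmeiser. For $\varepsilon\in(0,1)$ set
\[
H_\varepsilon[f] \defeq \tfrac12\Vert f\Vert_H^2 + \varepsilon\,\SPH{Bf}{f}.
\]
Since $\Vert B\Vert\le 1$ by Proposition~\ref{prop:vonNeumann}, this satisfies
\[
\tfrac{1-\varepsilon}{2}\Vert f\Vert_H^2\le H_\varepsilon[f]\le\tfrac{1+\varepsilon}{2}\Vert f\Vert_H^2 .
\]
By (D6), (D7) and invariance, $(T_t)$ preserves the mean $\SPH{f}{1}$, so it suffices to take $\SPH{f}{1}=0$. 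We then show
\[
-\tfrac{\mathrm{d}}{\mathrm{d}t}H_\varepsilon[T_tf]\ \ge\ \kappa\,\Vert T_tf\Vert_H^2 .
\]
Grönwall then gives $H_\varepsilon[T_tf]\le e^{-2\kappa t/(1+\varepsilon)}H_\varepsilon[f]$, and the norm equivalence yields $\kappa_1=((1+\varepsilon)/(1-\varepsilon))^{1/2}$ and $\kappa_2=\kappa/(1+\varepsilon)$. By (D3) and density, together with the boundedness of $B$ and contractivity, we can work with $f\in\core$ and the entropy production functional
\[
D[f] \defeq -\SPH{\cL f}{f} - \varepsilon\SPH{B\cL f}{f}-\varepsilon\SPH{Bf}{\cL f}.
\]

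For $f\in\core$ we expand $D[f]$ using $\cL=S-A$. First, $-\SPH{Sf}{f}\ge\Lambda_m\Vert(I-P)f\Vert^2$ by (H2), since $\SPH{f}{1}=0$. Next come preparatory properties of $B$. Because $SP=0$ and $P$, $S$ are symmetric, we get $PS=0$ on $\core$. From (H1) and (D5) we get $B=PB$, hence $BS=BS(I-P)$ and $BA$ splits into $BAP+BA(I-P)$. Using (G), the operator $B(AP)$ extends $(I+G)^{-1}G$ restricted to $P(H)$, where $G=(AP)^*AP$. (H3) together with spectral calculus then gives
\[
\SPH{BAPf}{f}\ \ge\ \tfrac{\Lambda_M}{1+\Lambda_M}\Vert Pf\Vert^2 .
\]
Note that $Pf\perp1$ here, since $P_S$ projects onto $P(H)$ together with the constants.

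The remaining terms are bounded by Cauchy--Schwarz:
\begin{itemize}
\item $\varepsilon\SPH{BS(I-P)f}{f}$ and $\varepsilon\SPH{BA(I-P)f}{f}$ are controlled by (H4), giving at most $(N_1+N_2)\varepsilon\Vert(I-P)f\Vert\,\Vert Pf\Vert$.
\item $\varepsilon\SPH{Bf}{\cL f}$ is handled via the identity $\Vert Bf\Vert\le\tfrac12\Vert(I-P)f\Vert$. This follows because $B^*$ is related to $AP(I+G)^{-1}$ and $B$ vanishes on $P(H)$ by (H1). It gives $\vert\SPH{Bf}{Af}\vert\le\Vert(I-P)f\Vert^2$ and a similar bound for $\SPH{Bf}{Sf}$.
\end{itemize}
Collecting terms and applying Young's inequality $ab\le\tfrac{\delta}{2}b^2+\tfrac{1}{2\delta}a^2$ with $a=\Vert(I-P)f\Vert$ and $b=\Vert Pf\Vert$ gives
\[
D[f]\ \ge\ \Bigl(\Lambda_m-\varepsilon(N_1+N_2+1)(1+\tfrac{1}{2\delta})\Bigr)\Vert(I-P)f\Vert^2+\varepsilon\Bigl(\tfrac{\Lambda_M}{1+\Lambda_M}-\tfrac\delta2(N_1+N_2+1)\Bigr)\Vert Pf\Vert^2 .
\]
Under \eqref{eq:convrate:con1and2} this is at least $\kappa\Vert f\Vert^2$.

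The main obstacle is rigour rather than the algebra. The estimates on $\SPH{Bf}{\cL f}$ need care because $\cL f$ need not lie in $D((AP)^*)$. The monotonicity statement also has to be extended from $\core$ to all trajectories $T_tf$ with $f\in D(\cL)$. I would handle both points through the core property (D3) and the boundedness of $B$: approximate $g\in D(\cL)$ by $g_n\in\core$ with $g_n\to g$ and $\cL g_n\to\cL g$, so that both sides of the inequality pass to the limit. The computation of $BAP$ via essential self-adjointness (G) is the other delicate point. These are exactly the ingredients of \cite[Theorem 2.18]{P1_Grothaus_Stilgenbauer_HypocoercivityKolmogorovBackward}, which may be cited for the details.
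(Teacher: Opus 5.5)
Your route is the one the paper relies on. The paper gives no proof of its own; it cites \cite[Theorem 2.18 and Corollary 2.13]{P1_Grothaus_Stilgenbauer_HypocoercivityKolmogorovBackward}, which is exactly this Dolbeault--Mouhot--Schmeiser modified-entropy argument, and your final lower bound on $D[f]$ is the one behind \eqref{eq:convrate:con1and2}. However, two steps are justified by estimates that do not give what you claim. In both cases the missing ingredient is the full DMS lemma on $B$.

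\emph{Norm equivalence.} $\Vert B\Vert\le 1$ only gives $\varepsilon\vert\SPH{Bf}{f}\vert\le\varepsilon\Vert f\Vert_H^2$, hence $H_\varepsilon[f]\ge\tfrac{1-2\varepsilon}{2}\Vert f\Vert_H^2$. That would give $\kappa_1=((1+2\varepsilon)/(1-2\varepsilon))^{1/2}$ and force $\varepsilon<\tfrac12$. For the stated $\kappa_1$ you need $\Vert B\Vert\le\tfrac12$. Write $B=K^*(I+KK^*)^{-1}$ with $K\defeq AP$ and use $\sup_{s\ge0}s/(1+s)^2=\tfrac14$. Combined with $B=PB=B(I-P)$, this gives
$\vert\SPH{Bf}{f}\vert=\vert\SPH{Bf}{Pf}\vert\le\tfrac12\Vert(I-P)f\Vert_H\Vert Pf\Vert_H\le\tfrac14\Vert f\Vert_H^2$.
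You state $\Vert Bf\Vert\le\tfrac12\Vert(I-P)f\Vert$ later, so here it is only a misattribution.

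\emph{The term $\varepsilon\SPH{Bf}{\cL f}$.} This is the more serious gap. It cannot be controlled through $\Vert Bf\Vert\le\tfrac12\Vert(I-P)f\Vert$: Cauchy--Schwarz leaves the factors $\Vert Af\Vert_H$ and $\Vert Sf\Vert_H$, which are not bounded by $\Vert(I-P)f\Vert_H$. What is needed instead:
\begin{enumerate}
\item Since $Bf\in P(H)\subset D(S)$ and $SP=0$, symmetry of $S$ gives $\SPH{Bf}{Sf}=\SPH{SBf}{f}=0$. This term vanishes; it is not ``similarly bounded''.
\item Since $Bf=PBf\in D(K)$, antisymmetry of $A$ and (H1) give $\SPH{Bf}{Af}=-\SPH{KBf}{f}=-\SPH{KBf}{(I-P)f}$. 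Here $KB=KK^*(I+KK^*)^{-1}=I-(I+KK^*)^{-1}$ is a contraction, so $\Vert KBf\Vert_H=\Vert KB(I-P)f\Vert_H\le\Vert(I-P)f\Vert_H$.
\end{enumerate}
With these facts your collection of terms goes through. It even yields a slightly stronger inequality than \eqref{eq:convrate:con1and2}. The stated form corresponds to bounding all cross terms by $\varepsilon(N_1+N_2+1)\Vert(I-P)f\Vert_H\Vert f\Vert_H$ and then using $\Vert f\Vert_H\le\Vert(I-P)f\Vert_H+\Vert Pf\Vert_H$.
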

	
	To verify (H4), we make use of \cite[Lemma 2.3]{Bertram_Grothaus_HypocoercivityLangevinMultipicativeNoise}, which states the following.
	\begin{lemma}
		\label{lem:H4:suffcrit}
		Let $(T,D(T))$ be a linear operator on $H$ satisfying $\core \subset D(T)$ and $AP(\core )\subset D(T^*)$. If there is a constant $N<\infty$ such that 
		\begin{equation*}
			\lVert T^*APf\rVert_H \leq N \lVert (I-G)f\rVert_H \qquad\text{for all } f\in \core ,
		\end{equation*}
		and \textup{(G)} holds, then $(BT, D(T))$ is bounded by $N$.
	\end{lemma}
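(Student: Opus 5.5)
The plan is to show that $BT$ is bounded on $D(T)$ by writing $B$ through adjoints and applying the hypothesis to $(AP)^*$-preimages. The first step is to identify $B^*$. Because $B$ is the bounded extension of $(I+(AP)^*(AP))^{-1}(AP)^*$, for $g\in D((AP)^*)$ and $h\in H$ we have
\[
\SPH{Bg}{h} = \SPH{(AP)^*g}{(I+(AP)^*AP)^{-1}h}
 = \SPH{g}{AP(I+(AP)^*AP)^{-1}h},
\]
since $(I+(AP)^*AP)^{-1}h\in D((AP)^*AP)\subset D(AP)$. Density of $D((AP)^*)$ and boundedness of $B$ then give $B^* = AP(I+(AP)^*AP)^{-1}$ on all of $H$.

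The second step is to relate $(AP)^*AP$ to $G$. On $\core$, (H1)-type antisymmetry gives $(AP)^*APf = -PA^2Pf = -Gf$ formally, so $I+(AP)^*AP$ should coincide with $I-\overline{G}$, where $\overline{G}$ is the closure. Concretely, $-G$ is symmetric and nonnegative on $\core$: $\SPH{-Gf}{f} = \Vert APf\Vert^2$. Moreover $-G\subset (AP)^*AP$, because for $f\in\core$ and $g\in D(AP)$ one has $\SPH{APf}{APg} = -\SPH{PA^2Pf}{g}$, using antisymmetry of $A$ and $P(\core)\subset D(A^2)$. By (G), $\overline{-G}$ is self-adjoint. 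A self-adjoint operator has no proper self-adjoint extension, so $\overline{-G} = (AP)^*AP$. Hence $(I-G)(\core)$ is dense in $H$, and $(I+(AP)^*AP)^{-1}$ is the inverse of the closure of $I-G$.

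For the main estimate, take $f\in D(T)$ and $h=(I-G)g$ with $g\in\core$. Then
\[
\SPH{BTf}{h} = \SPH{Tf}{B^*h} = \SPH{Tf}{APg} = \SPH{f}{T^*APg},
\]
using $APg\in D(T^*)$. The hypothesis gives $|\SPH{BTf}{h}|\le \Vert f\Vert\, N\Vert (I-G)g\Vert = N\Vert f\Vert\,\Vert h\Vert$. Since such $h$ form a dense set, $\Vert BTf\Vert\le N\Vert f\Vert$.

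I expect the identification $(I+(AP)^*AP)^{-1}(I-G)g = g$ for $g\in\core$ to be the main obstacle. It needs the inclusion $-G\subset (AP)^*AP$, which comes from the domain conditions in (D5), together with the essential self-adjointness in (G) to get density of the range. Everything else is adjoint bookkeeping.
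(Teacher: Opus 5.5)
Your proof is correct and follows essentially the same route as the standard argument behind the result the paper cites without proof (\cite[Lemma 2.3]{Bertram_Grothaus_HypocoercivityLangevinMultipicativeNoise}): (G) and maximality of self-adjoint operators give $\overline{-G}=(AP)^*AP$, hence $(I-G)(\core)$ is dense and $B^*(I-G)g=APg$, so $(BT)^*(I-G)g=T^*APg$ is bounded by $N\Vert (I-G)g\Vert$ on a dense set. One labelling slip: the inclusion $-G\subset (AP)^*AP$ is not an ``(H1)-type'' fact, since it uses only the antisymmetry of $A$ from (D4) and $P(\core)\subset D(A^2)$ from (D5), which is exactly what your explicit computation uses.
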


   Regarding conditions (D2) and (G) and the related assumptions (eL) and (eT), we recall the following from the theory of operator semigroups, see e.g.\ \cite{Pazy, Goldstein}. 
   
    \begin{remark}
    \label{rem:ess-m-dissipativity}
        If an operator $(R,D(R))$ on $H$ is densely defined and dissipative, i.e., $(Rf, f)_H \leq 0$ for all $f\in D(R)$, 
                then the following are equivalent: 
       \begin{enumerate}        
            \item [(i)]    $(R,D(R))$ is essentially $m$-dissipative;
            \item [(ii)]   its closure generates a strongly continuous operator semigroup (which then is contractive); 
            \item [(iii)]   $(\lambda I-R)(D(R))$ is dense in $H$ for some (and hence all) $\lambda >0$.
        \end{enumerate}
        If, moreover, $(R,D(R))$ is symmetric,
        it is essentially $m$-dissipative if and only if it is essentially self-adjoint.
    \end{remark}
		
	\section{Application to Hamiltonian Diffusions with Singular Potentials in Both Variables}
	\label{sec:hypocoercivity_application}
	
	To prove our main result, we show that Theorem \ref{thm:hypocoercivity:abstract} can be applied in our setting of Hamiltonian diffusions with singular potentials in both variables: In Subsection \ref{subsec:data_conditions}, we supplement the definition from the introduction to establish a framework as described by the data conditions (D). In Subsection \ref{subsec:hypocoercivity_conditions}, we verify the hypocoercivity conditions (H). Finally, in Subsection \ref{subsec:main_result}, we establish the main result and compute the convergence rate.
	
	\subsection{The Data Conditions}
	\label{subsec:data_conditions}

    First, we fix a set of assumptions and recall the setting from the introduction.

    \begin{standingassumptions}
        Throughout this subsection, let $\Sigma\colon \mathbb{R}^{d_2} \to \mathbb{R}^{d_2\times d_2}$ be pointwise symmetric and satisfy ($\Sigma2$). Let $\phi_1$ and $\phi_2$ be $\R\cup \{+\infty\}$-valued potentials on $\mathbb{R}^{d_i}$ satisfying ($\Phi_i$1) and ($\Phi_i$3), respectively.
    \end{standingassumptions}
    
	\begin{definition}
		\label{def:recall_intro}        
		Let $\mu_i$ denote the probability measure on $\mathbb{R}^{d_i}$ obtained by normalization of $e^{-\phi_i}\dx$, let $\Omega_{i}$ denote the set $\{ \phi_i < \infty \}$, and $\Omega =\Omega_1 \times \Omega_2$.
		Let $\core =C_c^2(\Omega)$ and $H = L^2(E,\mu)$ with $\mu=\mu_1\otimes\mu_2$ and $E = \mathbb{R}^{d_1}\times \mathbb{R}^{d_2}$.
		Let $(\sL, \core)$ on $H$ be defined by \eqref{eq:intro:generator}.
	\end{definition} 
	
	\begin{remark}
		\label{rem:conventions}
        The density of $\mu$ with respect to $\dx$ is strictly positive on $\Omega$ and vanishes on $\Omega^c$. Thus $H \cong L^2(\Omega,\mu)$, and every $f\in L^2(\Omega,\mu)$ admits at most one continuous representative on $\Omega$. 
        We can therefore identify $\mu$-square-integrable elements of $C(\Omega)$ with their natural embedding into $H$.
		
		The spaces $L^2(\mathbb{R}^{d_1},\mu_1)$ and $L^2(\mathbb{R}^{d_2},\mu_2)$ embed canonically and isometrically into $H$ by viewing elements as  \enquote{functions on $\R^{d_1}\times\R^{d_2}$ constant w.r.t. $y$ and $x$, respectively}. Thus we can view  $L^2(\mu_i)\defeq L^2(\mathbb{R}^{d_i},\mu_i)$ as a closed subspace of $H$ and may use the simplified notation $\Vert \cdot\Vert$ for both the norm on $H$ and on $L^2(\mu_i)$ without risk of confusion.
	\end{remark}
	
	Next, we introduce the projections $P$ and $P_S$. The following characterization is a consequence of Fubini's theorem.
	\begin{lemma}
		\label{lem:IntroProjections}
        Let $P_S$ and $P$ denote the orthogonal projections of $H$ onto $L^2(\mu_1)$ and $\{f\in L^2(\mu_1) : \SPH{f}{1}=0\}$ respectively. Then for $f\in H$ it holds
        \begin{equation*}
            P_S f = \int_{\mathbb{R}^{d_2}} f\,\dmui{2} \defeq 
			\int_{\mathbb{R}^{d_2}}f(\cdot,y)\,\dmui{2}(y) \qquad\text{and} \qquad Pf = P_S f-\SPH{f}{1}.
        \end{equation*}
	\end{lemma}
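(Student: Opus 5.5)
The plan is a direct Fubini computation followed by a check of the defining properties of an orthogonal projection. First, for $f\in H=L^2(E,\mu)$ with $\mu=\mu_1\otimes\mu_2$, Fubini--Tonelli applied to $\vert f\vert^2$ shows that $y\mapsto f(x,y)$ lies in $L^2(\mu_2)$ for $\mu_1$-a.e.\ $x$. Hence $\tilde P f(x)\defeq\int_{\R^{d_2}} f(x,y)\,\dmui{2}(y)$ is defined $\mu_1$-a.e., is measurable, and satisfies, by Jensen (or Cauchy--Schwarz, since $\mu_2$ is a probability measure),
\begin{equation*}
\Vert \tilde P f\Vert_{L^2(\mu_1)}^2\le \int\!\!\int \vert f\vert^2\,\dmui{2}\,\dmui{1}=\Vert f\Vert_H^2.
\end{equation*}
So $\tilde P\colon H\to L^2(\mu_1)\subset H$ is bounded, using the embedding of Remark~\ref{rem:conventions}.

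Next I would verify that $\tilde P$ is the orthogonal projection onto $L^2(\mu_1)$. If $g\in L^2(\mu_1)$, viewed as constant in $y$, then $\tilde P g=g$ because $\mu_2(\R^{d_2})=1$. Moreover, for $f\in H$ and $g\in L^2(\mu_1)$, Fubini gives $\SPH{f}{g}=\int g(x)\int f(x,y)\,\dmui{2}(y)\,\dmui{1}(x)=\SPH{\tilde Pf}{g}$, so $f-\tilde Pf\perp L^2(\mu_1)$. Together with $\tilde P(H)\subset L^2(\mu_1)$, these two facts characterize the orthogonal projection, so $P_S=\tilde P$.

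For $P$, let $V\defeq\{f\in L^2(\mu_1):\SPH{f}{1}=0\}$. Since $1\in L^2(\mu_1)$, we have the orthogonal decomposition $L^2(\mu_1)=V\oplus \operatorname{span}\{1\}$, and $\Vert 1\Vert_H=1$. The orthogonal projection onto $\operatorname{span}\{1\}$ is $f\mapsto\SPH{f}{1}$. Because $V\subset L^2(\mu_1)$, the projection onto $V$ equals $P_S$ minus the projection onto $\operatorname{span}\{1\}$, which gives $Pf=P_Sf-\SPH{f}{1}$. One can also check this directly: $P_Sf-\SPH{f}{1}$ lies in $L^2(\mu_1)$, has mean $\SPH{P_Sf}{1}-\SPH{f}{1}=0$ (using $1\in L^2(\mu_1)$), and $f-(P_Sf-\SPH{f}{1})$ is orthogonal to $V$.

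There is no real obstacle here. The only point needing care is the measure-theoretic well-definedness of the $y$-integral and of the embedding of $L^2(\mu_i)$ into $H$, and both are handled by Fubini--Tonelli because $\mu_2$ is finite.
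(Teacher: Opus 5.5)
Your proposal is correct and follows the paper's route. The paper states this lemma only as a consequence of Fubini's theorem, and your argument spells out exactly that verification: Fubini--Tonelli for well-definedness and for the orthogonality $f - P_S f \perp L^2(\mu_1)$, and then subtracting the projection onto the constants to obtain $P$.
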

     Here, $\int_{\mathbb{R}^{d_2}} f\,\dmui{2}$ denotes the equivalence class containing all measurable extensions of $x\mapsto \int_{\mathbb{R}^{d_2}} f(x,\cdot)\,\dmui{2}$, which is defined $\mu_1$-almost everywhere for any version of $f \in H$.
    
    Using dominated convergence to differentiate under the integral sign and to ensure continuity of the derivative, see e.g. \cite[Theorem 11.4 and 11.5]{Schilling}, we obtain the following regularity result, which we use without further reference.

	\begin{lemma}
		\label{lem:PS:diff}
		For $f\in C_c^k(\Omega)$ and $k\in\mathbb{N}$ we have that $P_Sf \in C_c^k(\Omega_1)$ and $\partial_i P_S f = P_S \dxi f$. 
	\end{lemma}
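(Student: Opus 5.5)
We prove Lemma \ref{lem:PS:diff} by differentiating under the integral sign in the $y$-variable, with the compact support of $f$ supplying the dominating functions. Fix $f\in C_c^k(\Omega)$ and write $K\defeq \supp f$, a compact subset of $\Omega=\Omega_1\times\Omega_2$. Let $K_1$ and $K_2$ be the projections of $K$ onto $\mathbb{R}^{d_1}$ and $\mathbb{R}^{d_2}$. They are compact, and since $K\subset \Omega_1\times\Omega_2$ we have $K_1\subset\Omega_1$ and $K_2\subset\Omega_2$.

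By Lemma \ref{lem:IntroProjections}, the element $P_Sf$ has the representative
\begin{equation*}
	g(x)\defeq \int_{\Omega_2} f(x,y)\,Z_2^{-1}e^{-\phi_2(y)}\,\dy .
\end{equation*}
It is defined for every $x\in\mathbb{R}^{d_1}$, because $y\mapsto f(x,y)$ is bounded and supported in $K_2$. If $x\notin K_1$, then $f(x,\cdot)\equiv 0$, so $g(x)=0$. Hence $\supp g\subset K_1$, which is compact in $\Omega_1$. This is the representative we identify $P_Sf$ with, as in Remark \ref{rem:conventions}.

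The main step is regularity, by induction on the order of derivatives $|\alpha|\le k$ in $x$. For a multi-index $\alpha$ with $|\alpha|\le k$, the map $(x,y)\mapsto \partial_x^\alpha f(x,y)\,Z_2^{-1}e^{-\phi_2(y)}$ has the following properties:
\begin{itemize}
	\item for fixed $y$, it is continuously differentiable in $x$ up to order $k-|\alpha|$;
	\item its $x$-derivatives are bounded by $\|\partial^\beta_x f\|_\infty\,\ind{K_2}(y)\,Z_2^{-1}e^{-\phi_2(y)}$.
\end{itemize}
This bound is a $\dy$-integrable majorant that does not depend on $x$, since $\mu_2$ is finite. The integrability also holds because $\phi_2$ is continuous, hence bounded, on the compact set $K_2\subset\Omega_2$.

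Dominated-convergence differentiation under the integral sign \cite[Theorem 11.5]{Schilling} then gives $\partial_i g(x)=\int \partial_{x_i} f(x,y)\,\dmui{2}(y)$, that is, $\partial_i P_Sf=P_S\dxi f$. The continuity theorem \cite[Theorem 11.4]{Schilling} with the same majorant shows that this derivative is continuous. Applying the argument iteratively to $\dxi f\in C_c^{k-1}(\Omega)$ yields $g\in C^k$, and together with the support statement, $g\in C_c^k(\Omega_1)$.

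The only point requiring care is that the singularities of $\phi_2$ do not destroy the integrable bound. This is settled by the observation that the integrand vanishes outside the compact set $K_2$, which stays strictly inside $\Omega_2$ and therefore away from the set $\{\phi_2=\infty\}$.
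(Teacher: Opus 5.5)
Your proof is correct and takes the same route as the paper, which obtains the lemma by differentiating under the integral sign via dominated convergence and getting continuity of the derivatives from the same two results (Schilling, Theorems 11.4 and 11.5). Your explicit majorant $\|\partial_x^\beta f\|_\infty \ind{K_2} Z_2^{-1} e^{-\phi_2}$ and your support argument via the projections $K_1 \subset \Omega_1$, $K_2 \subset \Omega_2$ supply the details that the paper leaves implicit.
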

	Here and in the following, we write $\partial_i g$ instead of $\dxi g$ or $\dyi g$ for a function $g$ depending only on one of the coordinates $x$ or $y$.
	
	\begin{lemma}
		\label{lem:D_dense}
		$\ccinfty(\Omega)$ is dense in $H$ and $\ccinfty(\Omega_i)$ is dense in $L^2(\mu_i)$ for $i=1,2$.
	\end{lemma}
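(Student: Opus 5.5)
The plan is to reduce everything to the standard density of $\ccinfty(U)$ in $L^2(U,\dx)$ for an open set $U$, transported to the weighted space via the localization provided by the sublevel sets in ($\Phi_i$1). First, by Remark \ref{rem:conventions}, $H\cong L^2(\Omega,\mu)$ and $L^2(\mu_i)\cong L^2(\Omega_i,\mu_i)$, since the densities vanish off $\Omega$ resp.\ $\Omega_i$. So it suffices to approximate an arbitrary $f\in L^2(\Omega,\mu)$; the case of $\Omega_i$ is identical with $\mu_i$ in place of $\mu$, and I would write the argument once for a generic open set $U$ with measure $\rho\,\dx$, where $\rho=Z^{-1}e^{-\phi}$ is continuous and strictly positive on $U$.

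Step one is truncation. Set $U_k\defeq\{\phi<k\}$ with $\phi=\phi_1\oplus\phi_2$ in the product case; alternatively use $\Omega_{1,k}\times\Omega_{2,k}$. By ($\Phi_i$1), each $U_k$ has compact closure $\overline{U_k}\subset U$, and $U_k\uparrow U$. Put $f_k\defeq f\ind{U_k}\ind{\{|f|\le k\}}$. Dominated convergence gives $f_k\to f$ in $L^2(\rho\,\dx)$. So it suffices to approximate a bounded $g$ supported in $\overline{U_k}$.

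Step two is local comparison of measures. On the compact set $K\defeq\overline{U_{k+1}}\subset U$ the continuous density $\rho$ satisfies $0<c\le\rho\le C$. The set $V\defeq U_{k+1}$ is open: $\phi$ is continuous in the extended sense, so sublevel sets are open. It contains $\overline{U_k}$, since $\phi\le k<k+1$ on the closure. By standard density, choose $h_n\in\ccinfty(V)$ with $h_n\to g$ in $L^2(V,\dx)$. Since $\rho\le C$ on $V$, this also gives convergence in $L^2(\rho\,\dx)$, and $h_n\in\ccinfty(U)$. A diagonal argument then finishes the proof. For the product case, $V$ is an open subset of $\Omega$ with compact closure, and the same reasoning applies verbatim with $\rho=\rho_1\otimes\rho_2$.

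The only mildly delicate point is making sure the intermediate open set $V$ is relatively compact in $U$ with $\rho$ bounded there. Boundedness of $\rho$ is automatic: on the set where $\phi$ is finite, $\rho=Z^{-1}e^{-\phi}$ is continuous and strictly positive, and $K$ is compact. Beyond this I expect no real obstacle. Note also that $\mu$ is finite, so no integrability issues arise at infinity.
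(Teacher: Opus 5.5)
Your proof is correct, but it takes a different route from the paper. The paper argues abstractly. Since $\mu$ restricted to $\Omega$ is a finite Borel measure on an open subset of Euclidean space, it is regular, so $C_c(\Omega)$ is dense in $L^2(\Omega,\mu)$. Each $C_c(\Omega)$-function is then a uniform limit of $\ccinfty(\Omega)$-functions by mollification, and uniform convergence gives $L^2(\mu)$-convergence because $\mu$ is finite.

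You instead truncate to relatively compact open sets. There you transfer the classical density of $\ccinfty(V)$ in $L^2(V,\dx)$ to the weighted space via the upper bound $\rho\le C$.

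What each approach buys:
\begin{itemize}
\item The paper's argument uses nothing about $\mu$ beyond its being a finite Borel measure.
\item Yours is more elementary and self-contained. It does, however, rely on local boundedness of $e^{-\phi}$ on $\Omega$, which comes from $\phi_i\in C^2(\Omega_i)$.
\end{itemize}

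Three small remarks:
\begin{itemize}
\item The compactness of the sublevel sets in \textup{($\Phi_i$1)} is not really needed in your argument. Any exhaustion of $\Omega$ by relatively compact open subsets would do equally well.
\item Only the upper bound on $\rho$ is used. The lower bound $c\le\rho$ would merely let you drop the truncation $\vert f\vert\le k$.
\item In the product case, use your alternative $\Omega_{1,k}\times\Omega_{2,k}$. That $\{\phi_1+\phi_2<k\}$ has compact closure in $\Omega$ is not literally stated in \textup{($\Phi_i$1)}. It needs the lower boundedness of each $\phi_i$, which is true and follows from \textup{($\Phi_i$1)}, but you would have to argue it.
\end{itemize}
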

	\begin{proof}
		As $\mu$ restricted to $\Omega$ is a finite Borel measure on an open subset of $\mathbb{R}^{d_1 + d_2}$, it is regular and $C_c(\Omega)$ dense in $L^2(\Omega,\mu)$, see e.g. \cite[Proposition 7.2.3 and 7.4.2]{Cohn}. But every $C_c(\Omega)$-function is the uniform limit (and thus $L^2(\Omega, \mu)$-limit) of $\ccinfty(\Omega)$-functions. The same arguments apply to $\ccinfty(\Omega_i) \subset L^2(\mu_i)$.
	\end{proof}

	Next, we introduce the symmetric and antisymmetric part of $(\sL, \core)$.

	\begin{definition}
		\label{def:SAL}
		Define $(S,\core )$ and $(A,C_c^1(\Omega))$ via
		\begin{align*}
			Sf &\defeq \tr(\Sigma \nabla_y^2f) + (\divergence \Sigma - \Sigma \nabla\phi_2)\cdot\grady f, \\
			Af &\defeq (Q^*\nabla\phi_1)\cdot\grady f - (Q\nabla\phi_2)\cdot\gradx f.
		\end{align*}
		Then $\sL=S-A$ on $\core $ and $(S,\core )$, $(A,C_c^1(\Omega))$ and $(\sL, \core )$ are well-defined linear operators on $H$: Since $\divergence \Sigma$ is bounded by a multiple of $\Csigma$, all terms in the definition of $Sf$ and $Af$ are (bounded by) functions from $C_c(\Omega)\subset H$.
	 
	\end{definition}	
	Integration by parts yields the following identities.
	\begin{lemma}
		\label{lem:bilinearforms}
		Let $g\in C^1(\Omega)\cap H$. Then
		\begin{equation*}
			\SPH{Sf}{g} = - \int_{E} (\Sigma\grady f)\cdot \grady g \,\dmu \qquad\text{for }f\in \core, 
		\end{equation*}
		and
		\begin{equation*}
			\SPH{Af}{g} = \int_{E} (Q\grady f)\cdot \gradx g - (Q\grady g)\cdot \gradx f \,\dmu \qquad\text{for }f\in C_c^1(\Omega).
		\end{equation*}
	\end{lemma}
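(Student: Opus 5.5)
The plan is to integrate by parts in $y$ (for $S$) and in both variables (for $A$) against the product density $Z_1^{-1}Z_2^{-1}e^{-\phi_1(x)-\phi_2(y)}$. The crucial observation is that $f\in\core$ has compact support in $\Omega$, while $g$ is only $C^1(\Omega)\cap H$. Hence all integrals live on $\supp f$, a compact subset of $\Omega$, where $\phi_1,\phi_2$ are $C^2$ and bounded. No boundary terms arise, and none of the singular behaviour of the potentials near $\partial\Omega$ enters.

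\emph{The identity for $S$.} First I rewrite $Sf$ in divergence form against the weight $e^{-\phi_2}$:
\[
e^{-\phi_2}\,Sf=\divergence_y\bigl(e^{-\phi_2}\Sigma\grady f\bigr).
\]
Expanding the right-hand side gives $e^{-\phi_2}\bigl(\tr(\Sigma\nabla_y^2 f)+(\divergence\Sigma)\cdot\grady f-(\Sigma\grady\phi_2)\cdot\grady f\bigr)$. Here I use the symmetry of $\Sigma$ to match $\sum_{i,j}\partial_j\Sigma_{ij}\,\partial_i f$ with $(\divergence\Sigma)\cdot\grady f$, and to write $\grady\phi_2\cdot\Sigma\grady f=(\Sigma\grady\phi_2)\cdot\grady f$.

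Since $\Sigma$ is only Lipschitz by ($\Sigma$2), this identity holds a.e. and in the weak sense. That is enough: the vector field $e^{-\phi_2}\Sigma\grady f$ is Lipschitz with compact support in $\Omega$. I then multiply by $g\,e^{-\phi_1}$ and apply the divergence theorem in $y$ for each fixed $x$ (valid for $W^{1,\infty}$ fields with compact support times the $C^1$ function $g$). Integrating over $x$ with Fubini yields $-\int(\Sigma\grady f)\cdot\grady g\,\dmu$. Integrability is clear because every integrand is continuous and compactly supported in $\Omega$.

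\emph{The identity for $A$.} Here $A$ is split into its two transport terms.
\begin{itemize}
\item For the $y$-transport I note that $\divergence_y(e^{-\phi_2}Q^*\gradx\phi_1\,\text{stuff})$ is the wrong structure. Instead I observe that $(Q^*\nabla\phi_1)\cdot\grady f=(Q\grady f)\cdot\gradx\phi_1$. Writing $e^{-\phi_1}\gradx\phi_1=-\gradx e^{-\phi_1}$ and integrating by parts in $x$ gives
\[
\int (Q^*\nabla\phi_1)\cdot\grady f\; g\,\dmu=\int \gradx\cdot\bigl(g\,Q\grady f\bigr)\,\dmu=\int (Q\grady f)\cdot\gradx g+g\,\tr(Q\nabla_y\nabla_x f)\,\dmu.
\]
\item Symmetrically, $-(Q\nabla\phi_2)\cdot\gradx f=-(Q^*\gradx f)\cdot\grady\phi_2$ with $e^{-\phi_2}\grady\phi_2=-\grady e^{-\phi_2}$. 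Integrating by parts in $y$ gives $-\int (Q^*\gradx f)\cdot\grady g+g\,\tr(Q^*\nabla_x\nabla_y f)\,\dmu$.
\end{itemize}
The mixed second-derivative terms cancel because $\sum_{i,j}Q_{ij}\partial_{x_i}\partial_{y_j}f$ appears in both with opposite signs. This requires $f\in C^2$, or a mollification argument for $f\in C_c^1$. Finally $(Q^*\gradx f)\cdot\grady g=(Q\grady g)\cdot\gradx f$, which gives the claim.

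\emph{Main obstacle: $f\in C_c^1$ only.} The one delicate point is that the mixed derivatives $\partial_{x_i}\partial_{y_j}f$ need not exist for $f\in C_c^1(\Omega)$. I would handle this first for $f\in C_c^2(\Omega)$ as above. I then pass to general $f\in C_c^1(\Omega)$ by mollifying: $f_n\to f$ in $C^1$ with supports in a fixed compact $K\subset\Omega$. Both sides of the identity are continuous under $C^1$ convergence on $K$, since $\nabla\phi_i$ and $\nabla g$ are bounded on $K$. Alternatively, I can do the integration by parts directly in the combined form
\[
\int\bigl[(Q\grady f)\cdot\gradx(g\,e^{-\phi_1})-(Q^*\gradx f)\cdot\grady(g\,e^{-\phi_2})\bigr]
\]
with everything first-order.
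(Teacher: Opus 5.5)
Your proof is correct and follows the same route as the paper, which simply says the identities follow by integration by parts against the product density; no boundary terms appear because $f$ has compact support in $\Omega$. Your additional care (the divergence form with a merely Lipschitz $\Sigma$, and mollification to handle the mixed derivatives when $f\in C_c^1(\Omega)$) fills in details the paper leaves implicit.
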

	By Lemmas \ref{lem:D_dense} and \ref{lem:bilinearforms}, the operators $(S,\core )$, $(A,C_c^1(\Omega))$ and $(\sL, \core )$ are densely defined and dissipative on $H$. As such they admit closures $(S,D(S))$, $(A,D(A))$ and $(\cL ,D(\cL))$, see e.g. \cite[Theorem 4.5]{Pazy}.
	Before we investigate these operators, we introduce cutoff functions as in \cite[Section 5]{P3_Camrud_LangevinSingularPotentials}.
	\begin{definition}
		\label{def:cutoffs}
        For $n\in\mathbb{N}$ let $\chi_n \in C^\infty(\mathbb{R})$ be such that $\ind{(-\infty,n]} \leq \chi_n \leq \ind{(-\infty,n+1)}$ and both $\vert \chi_n'\vert$ and $\vert \chi_n''\vert$ are bounded by a constant $C(\chi) \in (0,\infty)$ independent of $n$.
	\end{definition}
	
	\begin{lemma}
		\label{lem:cutoffs}
		For $i=1,2$ and $n\in\mathbb{N}$ we have that $\chi_n(\phi_i)$, $\chi_n'(\phi_i)$, $\chi_n''(\phi_i)\in C_c^2(\Omega_i)$ and
		\begin{equation*}
			\chi_n(\phi_i) \to 1 \quad\text{and}\quad \chi_n'(\phi_i),\chi_n''(\phi_i) \to 0
		\end{equation*}
		pointwise on $\Omega_i$ as $n\to\infty$ and uniformly bounded by 1 respectively $C(\chi)$.
	\end{lemma}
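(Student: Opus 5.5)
The argument rests on the sublevel-set structure in ($\Phi_i$1) and on the support properties of $\chi_n$ from Definition \ref{def:cutoffs}. I will treat $g\in\{\chi_n,\chi_n',\chi_n''\}$ together, since each is smooth and vanishes on $[n+1,\infty)$.

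\emph{Regularity.} By ($\Phi_i$1), $\phi_i\in C^2(\Omega_i)$ and $g\in C^\infty(\mathbb{R})$. The chain rule then gives $g(\phi_i)\in C^2(\Omega_i)$.

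\emph{Compact support.} The function $g(\phi_i)$ vanishes wherever $\phi_i\geq n+1$. Hence its support in $\Omega_i$ lies in the closure of $\{\phi_i<n+1\}=\Omega_{i,n+1}$. By ($\Phi_i$1), this closure is compact and contained in $\Omega_i$. Therefore $g(\phi_i)\in C_c^2(\Omega_i)$.

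There is a subtlety for $g=\chi_n$: it is not zero on $(-\infty,n]$, so one must argue on $\Omega_i$ only. There $\phi_i$ is finite, and the support of $\chi_n(\phi_i)$ is the closure of $\{x\in\Omega_i:\chi_n(\phi_i(x))\neq0\}$. This set is contained in $\{\phi_i<n+1\}$, so the same bound applies. When $\chi_n(\phi_i)$ is regarded as a function on all of $\mathbb{R}^{d_i}$, one extends it by zero outside $\Omega_i$. This extension is consistent because the support stays inside a compact subset of $\Omega_i$.

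\emph{Convergence and bounds.} Fix $x\in\Omega_i$. Since $\phi_i(x)<\infty$, for all $n\geq\phi_i(x)$ we have $\phi_i(x)\in(-\infty,n]$. On the open set $(-\infty,n)$ we have $\chi_n=1$, so $\chi_n'=\chi_n''=0$ there. At the endpoint $n$ itself, $\chi_n$ attains its maximum $1$, so $\chi_n'(n)=0$. Also, $\chi_n''(n)=\lim_{t\uparrow n}\chi_n''(t)=0$ by continuity. Consequently $\chi_n(\phi_i(x))=1$ and $\chi_n'(\phi_i(x))=\chi_n''(\phi_i(x))=0$ for all $n\geq\phi_i(x)$. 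To avoid the endpoint argument altogether, one can simply take $n>\phi_i(x)$. This gives the pointwise limits.

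The uniform bounds follow directly from Definition \ref{def:cutoffs}. The inequalities $0\leq\chi_n\leq1$ give the bound $1$ for $\chi_n(\phi_i)$. The assumption that $\vert\chi_n'\vert,\vert\chi_n''\vert\leq C(\chi)$ gives the bound $C(\chi)$ for the derivative terms.

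None of these steps is a real obstacle. The only point that needs care is the support argument: we use that the support lies in $\overline{\Omega_{i,n+1}}$, and we need the sublevel-set compactness from ($\Phi_i$1), not merely that $\Omega_i$ is open.
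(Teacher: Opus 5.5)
Your proof is correct and follows the paper's argument. The paper likewise sandwiches $\chi_n(\phi_i)$ between $\ind{\Omega_{i,n}}$ and $\ind{\Omega_{i,n+1}}$, bounds $\vert\chi_n'(\phi_i)\vert,\vert\chi_n''(\phi_i)\vert$ by $C(\chi)\ind{\Omega_{i,n+2}\setminus\Omega_{i,n}}$, and concludes because the sublevel sets exhaust $\Omega_i$ and have compact closure in $\Omega_i$ by ($\Phi_i$1). Your chain-rule and endpoint remarks only spell out what the paper leaves implicit; the ``subtlety'' you flag for $g=\chi_n$ is not really one, since the support argument is the same for all three functions.
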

	
	\begin{proof}
        Since $\Omega_{i,n}= \{\phi_i<n\}$, the identity $\ind{(-\infty,n]} \leq \chi_n \leq \ind{(-\infty,n+1)}$ implies that
        \begin{equation*}
			\ind{\Omega_{i,n}} \leq \chi_n(\phi_i) \leq \ind{\Omega_{i,n+1}}\qquad\text{and}\qquad \vert \chi_n'(\phi_i) \vert, \vert \chi_n''(\phi_i) \vert \leq C(\chi) \ind{\Omega_{i,n+2}\setminus \Omega_{i,n}}.
		\end{equation*}
        The claim follows since the sets $\Omega_{i,n}$, $n\in\mathbb{N}$, form a sequence exhausting $\Omega_{i}$ and have compact closure contained in $\Omega_i$ by assumption ($\Phi_i$1).
	\end{proof}

    Using these cutoff functions for approximation, dominated convergence, and the integrability assumptions ($\Phi_1$3) and ($\Phi_2$3), we can verify some natural formulas for $(S,D(S))$, $(A,D(A))$, and $(\cL ,D(\cL))$. The proof is deferred to the appendix.
    
	\begin{lemma}
		\label{lem:D5D7}
		\begin{enumerate}[label={\roman*)}]
			\item $L^2(\mu_1)\subset D(S)$ and $S|_{L^2(\mu_1)}=0$. In particular, $P(H)\subset D(S)$ and $SP=0$.
			\item $1\in D(A)$ and $A1 = 0$. Moreover, $P(\core )\subset D(A)$ and, for $f\in \core$,
			\begin{equation*}
				AP f = AP_S f = -( Q\nabla\phi_2)\cdot\gradx  P_S f.
			\end{equation*}
			\item $AP(\core)\subset D(A)$ and, for $f\in \core$,
			\begin{equation}
				\label{eq:AAP}
				A^2P f = ( Q \nabla\phi_2)\cdot ((\gradx^2 P_S f) Q\nabla\phi_2) - ( Q^*\nabla\phi_1)\cdot ((\nabla^2\phi_2) Q^* \gradx P_S f).
			\end{equation}
			\item  $1\in D(\cL )$ and $\cL 1=0$.
		\end{enumerate}
	\end{lemma}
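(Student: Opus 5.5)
The common strategy for all four parts is to approximate the relevant element by elements of $\core$ (or $C_c^1(\Omega)$) using the cutoffs $\chi_n(\phi_i)$ from Definition \ref{def:cutoffs}. We then show that the images under $S$, $A$ or $\cL$ converge in $H$ by dominated convergence, and conclude by closedness. Lemma \ref{lem:cutoffs} supplies pointwise convergence $\chi_n(\phi_i)\to1$ and $\chi_n'(\phi_i),\chi_n''(\phi_i)\to0$, together with uniform bounds. The integrability assumptions ($\Phi_1$3) and ($\Phi_2$3), with ($\Sigma2$), supply the dominating functions.

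\emph{Part i).} By Lemma \ref{lem:D_dense} it suffices to consider $g\in C_c^2(\Omega_1)$; the general case follows by closedness of $S$, since $C_c^2(\Omega_1)$ is dense in $L^2(\mu_1)$ and $S$ vanishes on it in the limit. Set $f_n\defeq g\otimes\chi_n(\phi_2)\in\core$. Then $f_n\to g$ in $H$, and
\[
Sf_n = g\,\bigl[\chi_n''(\phi_2)(\nabla\phi_2\cdot\Sigma\nabla\phi_2)+\chi_n'(\phi_2)\bigl(\tr(\Sigma\nabla^2\phi_2)+(\divergence\Sigma-\Sigma\nabla\phi_2)\cdot\nabla\phi_2\bigr)\bigr].
\]
This is dominated by $C\,|g|\,(|\nabla\phi_2|^2+|\nabla^2\phi_2|+|\nabla\phi_2|)\in L^2(\mu)$, by ($\Phi_2$3), and it tends to $0$ pointwise. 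Hence $Sg=0$. Since $P(H)\subset L^2(\mu_1)$, this gives $SP=0$.

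\emph{Part ii).} Take $f_n=\chi_n(\phi_1)\otimes\chi_n(\phi_2)$. Then
\[
Af_n=\chi_n(\phi_1)\chi_n'(\phi_2)\,(Q^*\nabla\phi_1)\cdot\nabla\phi_2-\chi_n'(\phi_1)\chi_n(\phi_2)\,(Q\nabla\phi_2)\cdot\nabla\phi_1 .
\]
The product structure of $\mu$ lets us dominate this by $C|\nabla\phi_1||\nabla\phi_2|\in L^2(\mu)$, using ($\Phi_1$3) and $|\nabla\phi_2|\in L^2(\mu_2)$. Since $Af_n\to0$ pointwise, we get $A1=0$. For $f\in\core$, Lemma \ref{lem:PS:diff} gives $P_Sf\in C_c^2(\Omega_1)$. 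Approximating by $P_Sf\otimes\chi_n(\phi_2)$ yields
\[
AP_Sf=-(Q\nabla\phi_2)\cdot\gradx P_Sf .
\]
The $y$-derivative term is $P_Sf\,\chi_n'(\phi_2)(Q^*\nabla\phi_1)\cdot\nabla\phi_2$. It has compact $x$-support, so $\nabla\phi_1$ is bounded there and the term tends to $0$. Combining with $A1=0$ gives $APf=AP_Sf$.

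\emph{Part iii).} Set $h\defeq-(Q\nabla\phi_2)\cdot\gradx P_Sf$. This lies in $C^1$ in $x$ with compact $x$-support, but $\nabla\phi_2$ is unbounded in $y$. We approximate by $h_n\defeq h\,\chi_n(\phi_2)\in C_c^1(\Omega)$ and apply the formula for $A$ on $C_c^1(\Omega)$.

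The limit terms are
\[
(Q\nabla\phi_2)\cdot(\gradx^2P_Sf)\,Q\nabla\phi_2
\quad\text{and}\quad
-(Q^*\nabla\phi_1)\cdot(\nabla^2\phi_2)\,Q^*\gradx P_Sf ,
\]
where $\nabla\phi_1$ is bounded on $\supp P_Sf$. The error term is $\chi_n'(\phi_2)\,h\,(Q^*\nabla\phi_1)\cdot\nabla\phi_2$, which is dominated by $C|\nabla\phi_2|^2$. The limit terms are dominated by $C(|\nabla\phi_2|^2+|\nabla^2\phi_2|)$. All of these dominating functions lie in $L^2(\mu)$ exactly because $|\nabla\phi_2|\in L^4(\mu_2)$ and $|\nabla^2\phi_2|\in L^2(\mu_2)$.

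This step is the main obstacle. It requires the precise integrability in ($\Phi_2$3), and it must also control that $h_n\to h$ in $H$, which holds by dominated convergence with $|h|\le C|\nabla\phi_2|$. Closedness of $A$ then gives \eqref{eq:AAP}.

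\emph{Part iv).} Take $f_n=\chi_n(\phi_1)\otimes\chi_n(\phi_2)\in\core$. Then $\cL f_n=Sf_n-Af_n$. The $A$-part was handled in ii). The $S$-part is $\chi_n(\phi_1)$ times the expression from i) with $g$ replaced by $1$, and it is dominated by the same $L^2(\mu_2)$ function. Both parts tend to $0$, so by closedness of $\cL$ we obtain $1\in D(\cL)$ and $\cL1=0$.
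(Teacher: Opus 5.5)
Your proof is correct and follows essentially the same argument as the paper. Both use the cutoffs $\chi_n(\phi_i)$, dominated convergence with dominating functions from ($\Phi_1$3), ($\Phi_2$3) and ($\Sigma$2), and closedness of $S$, $A$ and $\cL$. The only deviation is in $A1=0$ and $\cL 1=0$: you use the product cutoff $\chi_n(\phi_1)\chi_n(\phi_2)\in\core$ directly, whereas the paper first shows $C_c^2(\Omega_1)\subset D(A)\cap D(\cL)$ with $\cL=-A=(Q\nabla\phi_2)\cdot\gradx$ there and then applies this to $\chi_n(\phi_1)$; both routes rest on the same integrability bounds, in particular $|\nabla\phi_1|\,|\nabla\phi_2|\in L^2(\mu)$.
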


    To conclude this section, we note that all data conditions (D) hold if (eL) is satisfied.

    \begin{proposition}[Verification of (D)]
        \label{prop:verification_data_conditions}
        If \textup{(eL)} holds, the data conditions \textup{(D)} hold.
    \end{proposition}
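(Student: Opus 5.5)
We verify (D1)--(D7) one at a time. Most of them follow from the definitions together with Lemmas \ref{lem:D_dense}, \ref{lem:bilinearforms} and \ref{lem:D5D7}.

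\textbf{(D1)--(D3).} (D1) holds by Definition \ref{def:recall_intro}, since $\mu=\mu_1\otimes\mu_2$ is a probability measure. By Lemma \ref{lem:bilinearforms} with $g=f$, the operator $(\sL,\core)$ is dissipative, and it is densely defined by Lemma \ref{lem:D_dense}. Assumption (eL) and Remark \ref{rem:ess-m-dissipativity} then show that its closure $(\cL,D(\cL))$ generates a strongly continuous contraction semigroup $(T_t)_{t\geq0}$, which gives (D2). For (D3), $\core$ is dense by Lemma \ref{lem:D_dense}, and it is a core of $(\cL,D(\cL))$ because $\cL$ is by definition the closure of $\sL|_{\core}$.

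\textbf{(D4).} The operator $(S,D(S))$ is the closure of $(S,\core)$. Lemma \ref{lem:bilinearforms} gives $\SPH{Sf}{g}=-\int(\Sigma\grady f)\cdot\grady g\,\dmu$, which is symmetric in $f,g\in\core$ because $\Sigma$ is pointwise symmetric. Symmetry passes to the closure.

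The operator $(A,D(A))$ is the closure of $(A,C_c^1(\Omega))$, so it is closed. Its antisymmetry on $C_c^1(\Omega)$ comes from the second identity in Lemma \ref{lem:bilinearforms}: the integrand changes sign when $f$ and $g$ are exchanged. Antisymmetry passes to the closure as well.

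The decomposition $\sL=S-A$ on $\core\subset C_c^1(\Omega)$ is Definition \ref{def:SAL}.

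\textbf{(D5).} Lemma \ref{lem:IntroProjections} shows that $P$ is an orthogonal projection and that $P_S$ has the stated form. Lemma \ref{lem:D5D7} i) gives $P(H)\subset D(S)$ and $SP=0$. Lemma \ref{lem:D5D7} ii) and iii) give $P(\core)\subset D(A)$ and $AP(\core)\subset D(A)$, that is, $P(\core)\subset D(A^2)$.

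\textbf{(D6) and (D7).} For (D7), $1\in D(\cL)$ and $\cL1=0$ by Lemma \ref{lem:D5D7} iv).

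For (D6), let $f\in\core$. We compute $\int_E \sL f\,\dmu=\SPH{\sL f}{1}=\SPH{Sf}{1}-\SPH{Af}{1}$. Lemma \ref{lem:bilinearforms} applies with $g=1\in C^1(\Omega)\cap H$. Since $\grady 1=\gradx 1=0$, both terms vanish. Hence $\int_E\sL f\,\dmu=0$, and $\mu$ is invariant.

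\textbf{Main obstacle.} The only point that needs care is the justification behind Lemma \ref{lem:bilinearforms} and Lemma \ref{lem:D5D7}. There one must integrate by parts against the singular densities $e^{-\phi_i}$ and pass to the limit with the cutoffs $\chi_n(\phi_i)$. We take those lemmas as given, so the proof reduces to collecting these facts.
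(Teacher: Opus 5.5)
Your proposal is correct and follows the paper's proof essentially line by line. (D1), (D3) and (D4) come from the construction together with Lemmas \ref{lem:D_dense} and \ref{lem:bilinearforms}, (D2) from (eL), (D5) and (D7) from Lemmas \ref{lem:IntroProjections} and \ref{lem:D5D7}, and (D6) from testing Lemma \ref{lem:bilinearforms} against $g=1$. One small remark: deriving dissipativity of $(\sL,\core)$ from Lemma \ref{lem:bilinearforms} uses that $\Sigma$ is positive semidefinite, which is not among that subsection's standing assumptions, but dissipativity is already part of (eL), so nothing is lost.
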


    \begin{proof}
        Since $(\cL,D(\cL))$ is the closure of $(\sL, \core)$, (D2) is equivalent to (eL), and (D3) follows from Lemma \ref{lem:D_dense}. Conditions \textup{(D1)} and -- up to (anti-)symmetry -- \textup{(D4)} hold by construction. (Anti-)symmetry of $(S,\core)$ and $(A,C_c^1(\Omega))$ follows from Lemma \ref{lem:bilinearforms} and extends to the closures. (D5) and (D7) follow from Lemmas \ref{lem:IntroProjections} and \ref{lem:D5D7}. Finally, $\SPH{\cL f}{1}=\SPH{Sf}{1}-\SPH{Af}{1}=0$ for all $f\in \core$ by Lemma \ref{lem:bilinearforms}, which proves (D6). 
    \end{proof}
	
	\subsection{The Hypocoercivity Conditions}
	\label{subsec:hypocoercivity_conditions}
    Next, we verify the hypocoercivity conditions (H) under the assumptions of our main result, Theorem \ref{thm:main:abstract}. To simplify the calculations, we additionally assume that $\phi_2$ is radially symmetric. This assumption can be dropped in the proof of Theorem \ref{thm:main:abstract} by means of a suitable coordinate transformation,   which is detailed in Appendix \ref{app:details_coordinate_trafos}.

     \begin{standingassumptions}
        Throughout this subsection, let $d_1 \leq d_2$, let $Q\in \mathbb{R}^{d_1\times d_2}$ be of full rank, and let $\Sigma\colon \mathbb{R}^{d_2} \to \mathbb{R}^{d_2\times d_2}$ be pointwise symmetric satisfying \textup{($\Sigma$1)} and \textup{($\Sigma$2)}. Let \textup{($\Phi_1$1)} - \textup{($\Phi_1$4)}, \textup{($\Phi_2$1)} - \textup{($\Phi_2$5)}, \textup{(eL)} and \textup{(eT)} hold. Furthermore, let $\tau=I$ and $b=0$ in ($\Phi_2$5), so that $\phi_2$ is radially symmetric.
    \end{standingassumptions}
	
	\subsubsection{Hypocoercivity Assumptions (H1) - (H3)}

    The symmetry of $\phi_2$ results in the following integral identities, whose proof can be found in the appendix.
    
	\begin{lemma}
		\label{lem:phi2_integrals}
		For $i,j\in\{1,\ldots,d_2\}$, it holds that $\int_{\Omega_2} \partial_i\phi_2 \,\dmui{2} = 0$ and
		\begin{displaymath}
			\int_{\Omega_2} \partial_i\partial_j\phi_2 \;\dmui{2} = \int_{\Omega_2} \partial_i\phi_2 \,\partial_j\phi_2 \;\dmui{2} =  \konst \,\delta_{ij}, \quad\text{where}\quad  \konst \defeq \frac{1}{d_2} \Vert \nabla\phi_2\Vert_{L^2(\mu_2)}^2.
		\end{displaymath}
	\end{lemma}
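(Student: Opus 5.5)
The plan is to combine the radial symmetry of $\phi_2$ (with $\tau=I$, $b=0$) with an integration by parts against the Gibbs weight, using the cutoffs from Definition \ref{def:cutoffs} to justify the boundary terms near the singular set $\partial\Omega_2$.

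\emph{First moment.} Write $\phi_2(y)=\psi(\vert y\vert^2)$, so that $\partial_i\phi_2(y)=2y_i\psi'(\vert y\vert^2)$ on $\Omega_2$. The domain $\Omega_2=\{\psi(\vert\cdot\vert^2)<\infty\}$ is invariant under the reflection $R_i\colon y_i\mapsto -y_i$, and so is the density $e^{-\phi_2}$. Under $R_i$, the function $\partial_i\phi_2$ changes sign, while $\partial_j\phi_2$ with $j\neq i$ does not. Since $\partial_i\phi_2\in L^1(\mu_2)$ by ($\Phi_2$3), the change of variables $R_i$ gives $\int\partial_i\phi_2\,\dmui{2}=0$. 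For the same reason $\int\partial_i\phi_2\,\partial_j\phi_2\,\dmui{2}=0$ when $i\neq j$, because the integrand is odd under $R_i$. The integrand is integrable since $\vert\nabla\phi_2\vert\in L^2(\mu_2)$.

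\emph{Diagonal terms.} Invariance under the coordinate permutation that swaps $y_i$ and $y_j$ shows that $\int(\partial_i\phi_2)^2\,\dmui{2}$ does not depend on $i$. Summing over $i$ then identifies the common value as $\konst=\frac{1}{d_2}\Vert\nabla\phi_2\Vert_{L^2(\mu_2)}^2$.

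\emph{Hessian identity.} It remains to prove $\int\partial_i\partial_j\phi_2\,\dmui{2}=\int\partial_i\phi_2\,\partial_j\phi_2\,\dmui{2}$; the symmetry argument above then also yields $\konst\delta_{ij}$ for the Hessian integrals. Formally this is the identity $\partial_i(\partial_j\phi_2\,e^{-\phi_2})=(\partial_i\partial_j\phi_2-\partial_i\phi_2\partial_j\phi_2)e^{-\phi_2}$ integrated over $\Omega_2$. To make it rigorous, multiply by $\chi_n(\phi_2)\in C_c^2(\Omega_2)$ (Lemma \ref{lem:cutoffs}) and integrate by parts with no boundary term:
\begin{equation*}
\int_{\Omega_2}\chi_n(\phi_2)\bigl(\partial_i\partial_j\phi_2-\partial_i\phi_2\partial_j\phi_2\bigr)\,\dmui{2}=-\int_{\Omega_2}\chi_n'(\phi_2)\,\partial_i\phi_2\,\partial_j\phi_2\,\dmui{2}.
\end{equation*}
Now let $n\to\infty$ by dominated convergence. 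The integrands are dominated by $\vert\nabla^2\phi_2\vert+(1+C(\chi))\vert\nabla\phi_2\vert^2$, which lies in $L^1(\mu_2)$ by ($\Phi_2$3). Moreover $\chi_n(\phi_2)\to1$ and $\chi_n'(\phi_2)\to0$ pointwise by Lemma \ref{lem:cutoffs}, so the right-hand side vanishes in the limit and the identity follows.

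The only delicate step is this justification of integration by parts near $\partial\Omega_2$, where $\phi_2$ blows up. The cutoff $\chi_n(\phi_2)$ settles it: its compact support removes the boundary term, and the integrability in ($\Phi_2$3) provides the dominating function. The symmetry steps are routine changes of variables, valid because $\Omega_2$ and $\mu_2$ are invariant under the relevant orthogonal maps.
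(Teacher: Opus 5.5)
Your proposal is correct and follows the paper's proof essentially step by step. The first moment and the off-diagonal terms vanish by oddness under the reflection $y_i\mapsto -y_i$, and the diagonal terms are equal by invariance under permutation of coordinates. The Hessian identity comes from integrating by parts against the cutoffs $\chi_n(\phi_2)$ and passing to the limit by dominated convergence, which ($\Phi_2$3) justifies.
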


	\begin{lemma}[Verification of (H1)]
		\label{lem:H1}
		The identity $PAP=0$ holds on $\core$.
	\end{lemma}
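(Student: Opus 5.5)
By Lemma \ref{lem:D5D7} ii), for $f\in\core$ we have $Pf\in D(A)$ and
\begin{equation*}
APf = -(Q\nabla\phi_2)\cdot\gradx P_S f = -\sum_{k=1}^{d_1}\sum_{l=1}^{d_2} Q_{kl}\,\partial_l\phi_2(y)\,\partial_k P_S f(x).
\end{equation*}
The plan is to apply the explicit formula for $P$ from Lemma \ref{lem:IntroProjections}, namely $Pg = P_S g - \SPH{g}{1}$, to $g=APf$. This reduces the claim to two facts: $P_S(APf)=0$ and $\SPH{APf}{1}=0$. The second follows from the first, since $\SPH{g}{1} = \int_{\mathbb{R}^{d_1}} P_S g\,\dmui{1}$ by Fubini.

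First we check that $APf$ is a product-type integrable function. Each summand is a product $\partial_l\phi_2(y)\,h_k(x)$ with $h_k=\partial_k P_S f\in C_c^1(\Omega_1)$, using Lemma \ref{lem:PS:diff}. Here $\partial_l\phi_2\in L^2(\mu_2)$ by ($\Phi_2$3), so $\partial_l\phi_2\in L^1(\mu_2)$ because $\mu_2$ is a probability measure. Hence Fubini applies, and for $\mu_1$-a.e.\ $x$,
\begin{equation*}
P_S(APf)(x) = -\sum_{k,l} Q_{kl}\,\partial_k P_S f(x)\int_{\Omega_2}\partial_l\phi_2\,\dmui{2}.
\end{equation*}

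The key input is Lemma \ref{lem:phi2_integrals}, which gives $\int_{\Omega_2}\partial_l\phi_2\,\dmui{2}=0$ for every $l$ by the radial symmetry of $\phi_2$ under the standing assumptions. So $P_S(APf)=0$, hence $\SPH{APf}{1}=0$, and $PAPf = 0-0 = 0$.

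The only point needing care is that $P$ is applied to an element of $H$ with the integrals justified. This rests on the integrability from ($\Phi_2$3) and on $\mu_2$ being supported in $\Omega_2$, where $\nabla\phi_2$ is defined, as in Remark \ref{rem:conventions}. I expect no further obstacle.
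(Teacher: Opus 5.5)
Your proposal is correct and follows the paper's proof essentially verbatim. You use the formula for $APf$ from Lemma~\ref{lem:D5D7}, show $P_S APf=0$ via $\int_{\Omega_2}\partial_l\phi_2\,\dmui{2}=0$ from Lemma~\ref{lem:phi2_integrals}, and obtain $\SPH{APf}{1}=0$ by Fubini; your added justification of Fubini through the integrability of $\partial_l\phi_2$ is a harmless detail that the paper leaves implicit.
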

	\begin{proof}
		Let $f\in \core$. Then $APf = -\nabla\phi_2 \cdot (Q^*\gradx P_S f) $ by Lemma \ref{lem:D5D7}, and hence
		\begin{displaymath}
			P_S A P f = \int_{\Omega_2} APf \,\dmui{2}
			= -\sum_{i=1}^{d_2}  (Q^*\gradx P_S f)_i \int_{\Omega_2} \partial_i\phi_2 \,\dmui{2} = 0
		\end{displaymath}
		due to Lemma \ref{lem:phi2_integrals}. Using Fubini's theorem to rewrite $\SPH{APf}{1}$, we conclude that
		\begin{displaymath}
			PAP f = P_S APf - \SPH{APf}{1} = P_S APf - \SP{P_S APf}{1}{L^2(\mu_1)} = 0. \qedhere
		\end{displaymath}
	\end{proof}
	
	\begin{lemma}[Verification of (H2)]
		\label{lem:H2}
		For all $f\in \core$, we have the estimate
		\begin{equation}
			\label{eq:Lambda_m}
			-(Sf,f)_H \geq \Lambda_m \Vert (I-P_S) f\Vert_H^2 \qquad\text{with}\quad \Lambda_m = \Cell \Lambda_2.
		\end{equation} 
	\end{lemma}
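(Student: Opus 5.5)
We combine the integration by parts identity of Lemma \ref{lem:bilinearforms}, the uniform ellipticity ($\Sigma$1), and the \Poincare inequality ($\Phi_2$2) for $\mu_2$, applied fibrewise in the $x$-variable.

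Fix $f\in\core$. Lemma \ref{lem:bilinearforms} with $g=f\in C^1(\Omega)\cap H$ gives
\begin{equation*}
-\SPH{Sf}{f}=\int_E (\Sigma\grady f)\cdot\grady f\,\dmu \geq \Cell \int_E \vert\grady f\vert^2\,\dmu ,
\end{equation*}
where the inequality uses ($\Sigma$1) pointwise; it holds $\mu_2$-almost everywhere in $y$, which suffices under the integral. By Fubini's theorem, $\mu=\mu_1\otimes\mu_2$, so the right-hand side equals $\Cell\int_{\mathbb{R}^{d_1}}\bigl(\int_{\Omega_2}\vert\nabla_y f(x,\cdot)\vert^2\,\dmui{2}\bigr)\dmui{1}(x)$.

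For each fixed $x\in\Omega_1$, the function $f(x,\cdot)$ lies in $C_c^2(\Omega_2)$, because the support of $f$ is a compact subset of $\Omega_1\times\Omega_2$ and its $y$-section is compact in $\Omega_2$. For $x\notin\Omega_1$ the section vanishes. Hence ($\Phi_2$2) applies to each section and yields
\begin{equation*}
\int_{\Omega_2}\vert\grady f(x,\cdot)\vert^2\,\dmui{2}\geq\Lambda_2\int_{\Omega_2}\Bigl\vert f(x,\cdot)-\textstyle\int_{\Omega_2} f(x,\cdot)\,\dmui{2}\Bigr\vert^2\dmui{2}.
\end{equation*}
By Lemma \ref{lem:IntroProjections}, the inner mean is exactly $(P_Sf)(x)$. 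Integrating in $\dmui{1}(x)$ therefore gives $\Lambda_2\Vert f-P_Sf\Vert_H^2$. Chaining the estimates yields \eqref{eq:Lambda_m} with $\Lambda_m=\Cell\Lambda_2$.

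There is no real obstacle here. The only points needing care are measurability and integrability for Fubini, which are immediate since $f\in C_c^2(\Omega)$ is bounded with bounded derivatives, and the fact that the \Poincare inequality is stated for $C_c^2(\Omega_2)$, which is exactly the regularity of the sections.
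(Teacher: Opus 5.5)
Your proposal is correct and follows the paper's proof essentially step for step: Lemma \ref{lem:bilinearforms} with $g=f$, ellipticity ($\Sigma$1), Fubini's theorem, the \Poincare inequality ($\Phi_2$2) applied to the sections $f(x,\cdot)\in C_c^2(\Omega_2)$, and identifying the $\mu_2$-mean with $P_Sf$ to obtain $\Lambda_2\Vert f-P_Sf\Vert_H^2$.
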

	\begin{proof}
		Let $f\in \core$. By Lemma \ref{lem:bilinearforms}, ($\Sigma$1) and Fubini's theorem we have
		\begin{multline*}
			-\SPH{Sf}{f} = \int_{\Omega} \SP{\grady f}{\Sigma \grady f}{\mathbb{R}^{d_2}} \,\dmu 
			\geq \Cell \int_{\Omega} \lvert \grady f \rvert^2 \,\dmu \\
			= \Cell \int_{\Omega_1} \Vert \grady f(x,\cdot)\Vert_{L^2(\Omega_2,\mu_2)}^2  \,\dmui{1}(x).
		\end{multline*}
		Note that $f(x,\cdot)\in C_c^2(\Omega_2)$ for all $x\in\Omega_1$. Hence the \Poincare inequality from ($\Phi_2$2) implies
		\begin{equation*}
			-(Sf,f)_H \geq \Cell \Lambda_2 \int_{\Omega_1} \bigl\lVert f(x,\cdot) - \textstyle\int\nolimits_{\Omega_2} f(x,y)\,\dmui{2}(y) \bigr\rVert_{L^2(\mu_2)}^2  \,\dmui{1}(x),
		\end{equation*}
		where the integral on the right hand side equals $\Vert f - P_S f \Vert_{H}^2$ by Fubini's theorem.
	\end{proof}
	
	\begin{lemma}[Verification of (H3)]
		\label{lem:H3}
		For all $f\in \core$, we have the estimate
		\begin{equation}
			\label{eq:Lambda_M}
			\Vert APf \Vert_H^2 \geq \Lambda_M \,\Vert Pf\Vert_H^2 \qquad\text{with}\quad\Lambda_M = \Lambda_1\konst\, \lmin, 
		\end{equation}
		where $\lmin>0$ denotes the smallest eigenvalue of $QQ^*$.
	\end{lemma}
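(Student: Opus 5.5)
Fix $f\in\core$ and abbreviate $g\defeq P_S f\in C_c^2(\Omega_1)$ (Lemma \ref{lem:PS:diff}). By Lemma \ref{lem:D5D7}, $APf=-(Q\nabla\phi_2)\cdot\gradx g = -\nabla\phi_2\cdot(Q^*\gradx g)$. This depends on $x$ only through $v(x)\defeq Q^*\gradx g(x)\in\mathbb{R}^{d_2}$ and on $y$ only through $\nabla\phi_2(y)$. We therefore compute $\Vert APf\Vert_H^2$ with Fubini's theorem, integrating first in $y$ against $\mu_2$ for fixed $x$.

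Lemma \ref{lem:phi2_integrals} gives $\int_{\Omega_2}\partial_i\phi_2\,\partial_j\phi_2\,\dmui{2}=\konst\,\delta_{ij}$. Hence
\begin{equation*}
\Vert APf\Vert_H^2=\int_{\Omega_1}\sum_{i,j=1}^{d_2} v_i v_j\int_{\Omega_2}\partial_i\phi_2\,\partial_j\phi_2\,\dmui{2}\,\dmui{1}=\konst\int_{\Omega_1}\vert Q^*\gradx g\vert^2\,\dmui{1}.
\end{equation*}
The interchange of sums and integrals is justified because $\gradx g$ is bounded with compact support and $\vert\nabla\phi_2\vert\in L^2(\mu_2)$ by ($\Phi_2$3).

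Next, pointwise we have $\vert Q^*w\vert^2=(w,QQ^*w)_{\mathbb{R}^{d_1}}\geq\lmin\vert w\vert^2$ for all $w\in\mathbb{R}^{d_1}$. Here $QQ^*\in\mathbb{R}^{d_1\times d_1}$ is symmetric positive definite because $Q$ has full rank and $d_1\leq d_2$, so $\lmin>0$. This yields $\Vert APf\Vert_H^2\geq\konst\lmin\Vert\gradx g\Vert_{L^2(\mu_1)}^2$.

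Finally, $g\in C_c^2(\Omega_1)$, so the \Poincare inequality ($\Phi_1$2) gives $\Vert\gradx g\Vert_{L^2(\mu_1)}^2\geq\Lambda_1\Vert g-\int g\,\dmui{1}\Vert_{L^2(\mu_1)}^2$. By Fubini's theorem, $\int_{\Omega_1} g\,\dmui{1}=\SPH{f}{1}$, so by Lemma \ref{lem:IntroProjections} the right-hand side equals $\Lambda_1\Vert Pf\Vert_H^2$. Combining the three steps gives the claim with $\Lambda_M=\Lambda_1\konst\lmin$.

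The only slightly delicate point is the first step, namely the covariance identity for $\nabla\phi_2$. It is provided by Lemma \ref{lem:phi2_integrals}, which is where the radial symmetry of $\phi_2$ enters. Everything else is routine, and $\konst>0$ holds since $\nabla\phi_2\not\equiv0$ (otherwise $\mu_2$ would not be finite on the open set $\Omega_2$ with compact sublevel sets).
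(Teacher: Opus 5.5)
Your proof is correct and follows the paper's argument step by step: Fubini with Lemma~\ref{lem:phi2_integrals} gives $\Vert APf\Vert_H^2=\konst\Vert Q^*\gradx P_Sf\Vert_{L^2(\mu_1)}^2$, then comes the eigenvalue bound for $QQ^*$, and finally the Poincar\'e inequality for $\mu_1$ applied to $P_Sf\in C_c^2(\Omega_1)$, with $\int P_S f\,\dmui{1}=\SPH{f}{1}$. Your extra remark that $\konst>0$, which the paper leaves implicit, is right. The operative reason, though, is that $\nabla\phi_2\equiv 0$ would make $\phi_2$ constant on some component of $\Omega_2$, and this contradicts ($\Phi_2$1), which requires $\overline{\Omega_{2,k}}$ to be compact and contained in $\Omega_2$; finiteness of $\mu_2$ is not what rules it out.
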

	Note that $QQ^*$ is invertible since, by assumption, $Q\in \mathbb{R}^{d_1\times d_2}$ has full rank and $d_1 \leq d_2$. So its smallest eigenvalue is indeed positive.
	\begin{proof}
		Let $f\in \core$. By Lemma \ref{lem:D5D7} we have
		\begin{equation*}
			\begin{split}
				\bigl\lVert APf \bigr\rVert_H^2
				& = \bigl\lVert \nabla\phi_2\cdot( Q^*\gradx P_S f) \bigr\rVert_H^2
				  = \int_{\Omega_1} \int_{\Omega_2} \Bigl(\sum_{i=1}^{d_2} \partial_i\phi_2 \,( Q^*\gradx P_S f)_i\Bigr)^2\,\dmui{2}\,\dmui{1} \\
				& = \sum_{i,j=1}^{d_2} \;\int_{\Omega_2} \partial_i\phi_2 \,\partial_j\phi_2 \;\dmui{2} \;\;\int_{\Omega_1} ( Q^*\gradx P_S f)_i ( Q^*\gradx P_S f)_j \,\dmui{1}.
			\end{split}
		\end{equation*}
		In view of Lemma \ref{lem:phi2_integrals} and the definition of $\lmin$, this yields
		\begin{equation*}
			\Vert APf \Vert_H^2
			= \konst \int_{\Omega_1} \vert  Q^*\gradx P_S f\vert^2 \,\dmui{1}
			\geq \konst \,\lmin \Vert \gradx P_S f\Vert_{L^2(\mu_1)}^2.
		\end{equation*}
		Applying the \Poincare inequality for $\mu_1$, see ($\Phi_1$2), to $P_S f \in C_c^2(\Omega_1)$, we obtain that
		\begin{equation*}
			\Vert \gradx P_S f\Vert_{L^2(\mu_1)}^2 
			\geq \Lambda_1 \Vert P_S f - \SP{P_S f}{1}{L^2(\mu_1)} \Vert_{L^2(\mu_1)}^2.
		\end{equation*}
		Finally,  $\SP{P_S f}{1}{L^2(\mu_1)} = \SP{f}{1}{H}$ by Fubini's theorem, and therefore
		\begin{equation*}
			\Vert P_S f - \SP{P_S f}{1}{L^2(\mu_1)} \Vert_{L^2(\mu_1)}^2 = \Vert P f \Vert_{L^2(\mu_1)}^2 = \Vert P f \Vert_H^2. \qedhere
		\end{equation*}
	\end{proof}
	
	\subsubsection{Hypocoercivity Assumption (G)}
    \label{subsubsec:G}

    Before we prove that $(G,\core)$ is essentially self-adjoint, we note that it is well-defined by (D5) and satisfies $\SPH{Gf}{g} = -\SPH{APf}{APg}$ for all $f,g\in \core$ since $A$ is antisymmetric and $P$ is an orthogonal projection. In particular, $(G,\core)$ is symmetric and negative semidefinite.

    Moreover, $G$ can be expressed in terms of the operator $(T,C_c^2(\Omega_1))$ from assumption (eT), 
    which, since $\tau= I$, is given by
	\begin{equation*}
        Tf =
        \sum_{i,j=1}^{d_1}( Q Q^*)_{ij} (\partial_i\partial_jf-\partial_i\phi_1\partial_jf).
	\end{equation*}
    For $f\in C_c^2(\Omega_1)$ and $g\in C^1(\Omega_1)\cap L^2(\mu_1)$, integration by parts yields
    \begin{equation}
        \label{eq:T}
        \SP{Tf}{g}{L^2(\mu_1)} = -  \int_{\Omega_1} ( Q^*\nabla f)\cdot ( Q^*\nabla g)\,\dmui{1}.
	\end{equation}
    In particular, $T$ is a densely defined (see Lemma \ref{lem:D_dense}), symmetric and negative semidefinite operator on $L^2(\mu_1)$.
	
	\begin{lemma}
		\label{lem:GT}
		For all $f\in \core$, we have that
		\begin{equation}
			\label{eq:GT}
			Gf = PA^2Pf = \konst T P_S f, \qquad\text{where}\quad\konst = \tfrac{1}{d_2} \Vert\nabla\phi_2\Vert_{L^2(\mu_2)}^2.
		\end{equation}
	\end{lemma}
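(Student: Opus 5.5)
The plan is to start from formula \eqref{eq:AAP} of Lemma~\ref{lem:D5D7} for $A^2Pf$ and apply $P = P_S - \SPH{\cdot}{1}$ to it. For $f\in\core$ set $g\defeq P_S f\in C_c^2(\Omega_1)$ (Lemma~\ref{lem:PS:diff}). Then
\begin{equation*}
A^2Pf = \sum_{i,j=1}^{d_2} \partial_i\phi_2\,\partial_j\phi_2\,(Q^*\gradx^2 g\, Q)_{ij} - \sum_{i,j=1}^{d_2} (Q^*\nabla\phi_1)_i\,\partial_i\partial_j\phi_2\,(Q^*\gradx g)_j .
\end{equation*}
Here the $x$-dependent factors $(Q^*\gradx^2 g\,Q)_{ij}$ and $(Q^*\nabla\phi_1)_i(Q^*\gradx g)_j$ are continuous with compact support in $\Omega_1$, hence bounded. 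The $y$-dependent factors $\partial_i\phi_2\partial_j\phi_2$ and $\partial_i\partial_j\phi_2$ lie in $L^1(\mu_2)$ by ($\Phi_2$3).

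First I would apply $P_S$, i.e.\ integrate in $y$ against $\mu_2$ using Fubini, and evaluate the $y$-integrals with Lemma~\ref{lem:phi2_integrals}. Both integrals equal $\konst\delta_{ij}$, so the $i,j$ double sums collapse to traces:
\begin{equation*}
P_S A^2 P f = \konst\Bigl(\tr(Q^*\gradx^2 g\,Q) - (Q^*\nabla\phi_1)\cdot(Q^*\gradx g)\Bigr) = \konst \sum_{i,j=1}^{d_1}(QQ^*)_{ij}\bigl(\partial_i\partial_j g - \partial_i\phi_1\,\partial_j g\bigr) = \konst\, T g,
\end{equation*}
using $\tr(Q^*MQ)=\tr(QQ^*M)$ and the symmetry of $\gradx^2 g$.

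Second, I would show that the constant part vanishes: $\SPH{A^2Pf}{1} = \SP{P_SA^2Pf}{1}{L^2(\mu_1)} = \konst\SP{Tg}{1}{L^2(\mu_1)} = 0$. This follows from \eqref{eq:T} with the test function $1\in C^1(\Omega_1)\cap L^2(\mu_1)$. Alternatively, it follows directly from antisymmetry, since $\SPH{A^2Pf}{1} = -\SPH{APf}{A1}=0$ by Lemma~\ref{lem:D5D7}. Hence $Gf = PA^2Pf = P_SA^2Pf = \konst T P_S f$.

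The only delicate step is justifying the use of Fubini and the factorisation of the integrals. The product structure handles this: each summand is a bounded compactly supported function of $x$ times a $\mu_2$-integrable function of $y$. For the $\nabla\phi_1$ term, ($\Phi_1$1) gives continuity on the compact support of $g$, so no integrability of $\nabla\phi_1$ beyond local boundedness is needed. I expect no real obstacle; the main work is the index bookkeeping in the trace identity.
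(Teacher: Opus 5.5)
Your proposal is correct and follows the paper's proof essentially step for step: integrate \eqref{eq:AAP} against $\mu_2$ using Lemma~\ref{lem:phi2_integrals} and cyclicity of the trace to get $P_SA^2Pf=\konst TP_Sf$, then show $\SPH{A^2Pf}{1}=0$ via Fubini and \eqref{eq:T}. Your alternative argument for the vanishing term, antisymmetry together with $A1=0$, is also valid, and your justification of Fubini via the product structure spells out a detail the paper leaves implicit.
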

	\begin{proof}
		Let $f\in \core$ and recall from Lemma \ref{lem:D5D7} that 
		\begin{displaymath}
			A^2P f = ( Q \nabla\phi_2)\cdot (\gradx^2( P_S f) \, Q\nabla\phi_2) - ( Q^*\nabla\phi_1)\cdot (\nabla^2\phi_2\,  Q^* \gradx P_S f).
		\end{displaymath}
        To determine the image of $A^2Pf$ under $P_S$, we integrate both terms on the right-hand side against $\mu_2$. Using Lemma \ref{lem:phi2_integrals} and invariance of the trace under cyclic permutations, we compute that
        \begin{multline*}
				\int_{\Omega_2} ( Q \nabla\phi_2)\cdot (\gradx^2( P_S f) \, Q\nabla\phi_2) \,\dmui{2}
				= \sum_{i,j} (Q^*\gradx^2( P_S f) \, Q)_{ij} \int_{\Omega_2} \partial_i\phi_2 \partial_j\phi_2 \,\dmui{2} \\
				= \sum_{i,j} (Q^*\gradx^2( P_S f) \, Q)_{ij} \cdot \konst \,\delta_{ij}
				= \konst \tr\bigl(Q^*\gradx^2( P_S f)Q\bigr) 
				= \konst \tr\bigl(QQ^*\gradx^2( P_S f)\bigr),
		\end{multline*}
        and
        \begin{multline*}
			\int_{\Omega_2} (Q^*\nabla\phi_1)\cdot (\nabla^2\phi_2\, Q^* \gradx P_S f) \,\dmui{2} \\
			= (Q^*\nabla\phi_1)\cdot \Bigl( \int_{\Omega_2}\nabla^2\phi_2 \,\dmui{2} \Bigr) (Q^* \gradx P_S f)
			= \konst (Q^*\nabla\phi_1)\cdot (Q^* \gradx P_S f).
		\end{multline*}
		Combining both results, we obtain
		\begin{equation*}
			P_S A^2Pf
			= \konst \sum_{i,j=1}^{d_1}(QQ^*)_{ij} (\partial_{i}\partial_{j} P_S f-\partial_i\phi_1\partial_j P_S f)
			= \konst T P_S f.
		\end{equation*}
		Using Fubini's theorem and \eqref{eq:T}, we observe that
		\begin{displaymath}
			\SPH{A^2Pf}{1} = \SP{P_S A^2Pf}{1}{L^2(\mu_1)} = \konst \SP{T P_S f}{1}{L^2(\mu_1)}=0,
		\end{displaymath}
		and thus
		\begin{equation*}
			Gf = PA^2Pf = P_SA^2Pf + \SPH{A^2Pf}{1} =  \konst T P_S f. \qedhere
		\end{equation*}		
	\end{proof}
	
	\begin{theorem}[Verification of (G)]
		\label{thm:G_selfadjoint}
		$(G,\core)$ is essentially self-adjoint.
	\end{theorem}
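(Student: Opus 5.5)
We have to show that $(G,\core)$ is essentially self-adjoint. By Remark \ref{rem:ess-m-dissipativity}, and because $(G,\core)$ is symmetric and negative semidefinite, it suffices to show that $(I-G)(\core)$ is dense in $H$. Equivalently, we show that any $h\in H$ with $\SPH{(I-G)f}{h}=0$ for all $f\in\core$ must vanish. The plan is to reduce this to assumption (eT) via Lemma \ref{lem:GT}, which gives $Gf = \konst T P_S f$. If $\konst=0$, then $\nabla\phi_2=0$ $\mu_2$-a.e., which is impossible since $Z_2<\infty$ forces $\phi_2$ to be nonconstant. So we may assume $\konst>0$.

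First decompose $H = L^2(\mu_1)\oplus L^2(\mu_1)^\perp$, where $L^2(\mu_1)=P_S(H)$. Write $h = P_S h + (I-P_S)h$. By Lemma \ref{lem:GT}, $Gf\in L^2(\mu_1)$ for every $f\in\core$. Moreover, $Gf$ depends only on $P_S f$, and $P_S$ maps $\core$ onto $C_c^2(\Omega_1)$. For the surjectivity, take $g\in C_c^2(\Omega_1)$ and any $\eta\in C_c^2(\Omega_2)$ with $\int\eta\,\dmui{2}=1$; then $f=g\otimes\eta\in\core$ satisfies $P_S f=g$.

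We split the orthogonality condition into the two components.

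\textbf{Component orthogonal to $L^2(\mu_1)$.} Take $f=g\otimes\eta$ with $\int\eta\,\dmui{2}=0$. Then $P_Sf=0$, so $Gf=0$, and the condition reads $\SPH{f}{h}=0$. Functions of the form $g\otimes\eta$ with $\eta$ of $\mu_2$-mean zero span a dense subset of $L^2(\mu_1)^\perp$. To see this, note that $\{g\otimes\eta\}$ is total in $H$ by Lemma \ref{lem:D_dense}, and $(I-P_S)(g\otimes\eta) = g\otimes(\eta-\int\eta\,\dmui{2})$. The function $\eta-\int\eta\,\dmui{2}$ is not compactly supported, but it can be approximated in $L^2(\mu_2)$ by mean-zero $C_c^2(\Omega_2)$ functions: replace the constant by a constant multiple of a fixed bump that is close to $1$ in $L^2(\mu_2)$, e.g.\ $\chi_n(\phi_2)$ from Lemma \ref{lem:cutoffs}. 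Hence $(I-P_S)h=0$.

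\textbf{Component in $L^2(\mu_1)$.} Now $h=P_Sh=:k\in L^2(\mu_1)$. For $f=g\otimes\eta$ with $\int\eta\,\dmui{2}=1$, Fubini gives
\[
\SPH{(I-G)f}{k} = \SP{g-\konst Tg}{k}{L^2(\mu_1)} = 0 \qquad\text{for all } g\in C_c^2(\Omega_1).
\]
By (eT) and Remark \ref{rem:ess-m-dissipativity}, $(I-\konst T)(C_c^2(\Omega_1))$ is dense in $L^2(\mu_1)$. This uses that $\konst T$ is essentially $m$-dissipative, since scaling by a positive constant preserves this property (apply Remark \ref{rem:ess-m-dissipativity}(iii) with $\lambda=1/\konst$). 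Therefore $k=0$, so $h=0$, and the claim follows.

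The main point requiring care is the density argument in the first component, specifically producing compactly supported mean-zero test functions $\eta$. The cutoff $\chi_n(\phi_2)$ from Lemma \ref{lem:cutoffs} converges to $1$ in $L^2(\mu_2)$ by dominated convergence, which handles this. The remaining step is a direct invocation of (eT).
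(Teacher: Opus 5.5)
Your proof is correct, and its core is the same as the paper's. You use Lemma \ref{lem:GT} to write $G=\konst T P_S$ on $\core$, you test the orthogonality relation with functions whose $P_S$-image is a prescribed $g\in C_c^2(\Omega_1)$, and you apply (eT) with $\lambda=\konst^{-1}$ to get $P_S h=0$.

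The difference is in how the $y$-dependent part of $h$ is handled.

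The paper tests with $\chi_n(\phi_2)f$, which converges in $H$ to the $y$-independent function $f$. The limiting pairing is therefore $\SP{(I-\konst T)f}{P_S h}{L^2(\mu_1)}$, which needs no prior knowledge of $(I-P_S)h$, so $P_S h=0$ comes first. The paper then notes that $\SPH{Gf}{h}=\SP{Gf}{P_S h}{L^2(\mu_1)}=0$, because $Gf\in L^2(\mu_1)$. The orthogonality relation collapses to $\SPH{f}{h}=0$ for all $f\in\core$, and density of $\core$ in $H$ finishes the proof.

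Your exactly normalized bump $\eta$ avoids a limit. The cost is that $\SPH{g\otimes\eta}{h}=\SP{g}{P_S h}{L^2(\mu_1)}$ only holds once $h$ is known to be $y$-independent. This forces you to prove $(I-P_S)h=0$ first, via density of mean-zero tensor products in $L^2(\mu_1)^\perp$. That argument is valid, provided the multiple of $\chi_n(\phi_2)$ is taken as $(\int\eta\,\dmui{2})/\omega_n$ so the mean vanishes exactly, and using the standard totality of $C_c^2(\Omega_1)\otimes C_c^2(\Omega_2)$ in $H$. The paper's ordering makes this step unnecessary, because the already available density of $\core$ in $H$ does the same job.

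One small point concerns $\konst>0$. The condition $Z_2<\infty$ alone does not rule out $\nabla\phi_2=0$ on $\Omega_2$: a potential equal to a constant on a bounded open set and $+\infty$ outside has finite $Z_2$. What excludes it is the compact-closure condition in ($\Phi_2$1). The case is harmless anyway, since $\konst=0$ gives $G=0$ on $\core$, so $(I-G)(\core)=\core$ is dense.
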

	\begin{proof}
		By Remark \ref{rem:ess-m-dissipativity}, the claim is equivalent to $(I-G)(\core)$ being dense in $H$. 
		Therefore, we let $g\in H$ satisfy
		\begin{equation}
			\label{eq:thm:G_SA:1}
			\SPH{(I-G)f}{g}=0 \qquad\text{for all }f\in \core,
		\end{equation}
		and show that $g=0$. In view of the Hahn-Banach and the Riesz representation theorem, this proves the claim.

        Let $f\in C_c^2(\Omega_1)$. We define $f_n\defeq \chi_n(\phi_2)f\in \core$ and $\omega_n\defeq \int_{\mathbb{R}^{d_2}} \chi_n(\phi_2) \,\dmui{2}$ for $n\in\mathbb{N}$. Then $ P_S f_n =  \omega_n f$ and by dominated convergence $f_n\to f$ in $H$ and $\omega_n \to 1$ in $\mathbb{R}$. Hence
		\begin{equation*}
			(I-G) f_n = (I- \konst T P_S) f_n
			= f_n -  \omega_n \konst Tf \to (I-\konst T)f \qquad\text{as}\quad n\to\infty.
		\end{equation*}
		Since $f$ and $Tf$ are independent of $y$, using Fubini's theorem and \eqref{eq:thm:G_SA:1}, we obtain that
		\begin{equation}
			\label{eq:thm:G_SA:2}
            \SP{(I - \konst T)f}{P_S g}{L^2(\mu_1)} = \SPH{(I- \konst T)f}{g} = \lim_{n\to\infty} \SPH{(I-G)f_n}{g} = 0.
		\end{equation}
		By Remark \ref{rem:ess-m-dissipativity}, assumption (eT) implies that $(\konst^{-1} I - T)(C_c^2(\Omega_1))$ is dense in $L^2(\mu_1)$. It thus follows from \eqref{eq:thm:G_SA:2} that $P_S g = 0$.
		
		Since $Gf$ is independent of $y$, we conclude that
		$\SPH{Gf}{g} = \SP{Gf}{P_Sg}{L^2(\mu_1)} = 0$ for all $f\in \core$.
		Combining this with \eqref{eq:thm:G_SA:1},
		\begin{displaymath}
			\SPH{f}{g} = \SPH{(I-G)f}{g} + \SPH{Gf}{g} = 0 
		\end{displaymath}
		for all $f\in \core$, and $g=0$ follows as $\core$ is dense in $H$.		
	\end{proof}
	
	\subsubsection{Hypocoercivity Assumption (H4) - First Inequality}	
	
	To verify the first inequality of (H4), we make use of an auxiliary operator as used in
	\cite[Section 4, in part. Lemma 4.3]{Bertram_Grothaus_JDE_generalizedLangevinDynamics}.
	
	\begin{lemma}
		\label{lem:R}
		For $k=1,\ldots,d_2$, define
		\begin{equation*}
			\omega_k \defeq \sum_{i,j=1}^{d_2} \bigl( \partial_i \Sigma_{ij} \, \partial_j\partial_k \phi_2 
			+ \Sigma_{ij} (\partial_i\partial_j\partial_k \phi_2 
			-  \partial_i \phi_2 \, \partial_j\partial_k\phi_2)\bigr) \in L^2(\mu_2).
		\end{equation*}
		Then
		\begin{equation*}
			Rf \defeq \omega\cdot (Q^*\gradx P_S f) \in H
            \qquad\text{and}\qquad \Vert Rf \Vert_H \leq \Csigma \, C(\phi_2) \,\Vert Q^* \gradx P_S f\Vert_{L^2(\mu_1)}
		\end{equation*}
		for all $f\in \core$, where $C(\phi_2)\in (0,\infty)$ is a constant depending only on $\phi_2$.
	\end{lemma}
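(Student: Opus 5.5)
The argument has two parts: first show that each component $\omega_k$ lies in $L^2(\mu_2)$ with an explicit bound, then use the tensor-product structure of $\mu=\mu_1\otimes\mu_2$ to handle $Rf$.

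\emph{Step 1: $\omega_k\in L^2(\mu_2)$.} Write $\omega_k$ as a sum of three types of terms. The terms $\partial_i\Sigma_{ij}\,\partial_j\partial_k\phi_2$ are bounded pointwise by $\Csigma\,\vert\nabla^2\phi_2\vert$. The terms $\Sigma_{ij}\,\partial_i\partial_j\partial_k\phi_2$ are bounded by $\Csigma\,\vert\nabla^3\phi_2\vert$. The terms $\Sigma_{ij}\,\partial_i\phi_2\,\partial_j\partial_k\phi_2$ are bounded by $\Csigma\,\vert\nabla\phi_2\vert\,\vert\nabla^2\phi_2\vert$. Here ($\Sigma$2) supplies the bounds on $\Sigma$ and $\nabla\Sigma$. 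By ($\Phi_2$3) and ($\Phi_2$4), all three majorants lie in $L^2(\mu_2)$. The triangle inequality therefore gives
\[
\Vert \omega\Vert_{L^2(\mu_2;\mathbb{R}^{d_2})}\leq \Csigma\, C(\phi_2),
\]
where $C(\phi_2)$ is $d_2^{2}$ (or a comparable combinatorial factor) times the sum of $\Vert\,\vert\nabla^2\phi_2\vert\,\Vert_{L^2(\mu_2)}$, $\Vert\,\vert\nabla^3\phi_2\vert\,\Vert_{L^2(\mu_2)}$ and $\Vert\,\vert\nabla\phi_2\vert\vert\nabla^2\phi_2\vert\,\Vert_{L^2(\mu_2)}$. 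This constant depends only on $\phi_2$. Measurability causes no trouble, since the third weak derivatives are measurable functions.

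\emph{Step 2: $Rf\in H$ and the norm bound.} For $f\in\core$, Lemma \ref{lem:PS:diff} gives $P_Sf\in C_c^2(\Omega_1)$, so $v\defeq Q^*\gradx P_S f$ is a bounded, compactly supported function of $x$ alone. Then $Rf(x,y)=\omega(y)\cdot v(x)$ is a product-type measurable function on $E$. By Cauchy--Schwarz in $\mathbb{R}^{d_2}$, $\vert Rf(x,y)\vert\leq\vert\omega(y)\vert\,\vert v(x)\vert$, and Fubini (Tonelli) yields
\[
\Vert Rf\Vert_H^2\leq \int_{\Omega_2}\vert\omega\vert^2\,\dmui{2}\int_{\Omega_1}\vert v\vert^2\,\dmui{1}.
\]
This is finite and gives $\Vert Rf\Vert_H\leq\Csigma C(\phi_2)\Vert Q^*\gradx P_Sf\Vert_{L^2(\mu_1)}$.

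No serious obstacle is expected. The only point requiring care is Step 1, namely checking that each summand of $\omega_k$ is covered by one of the integrability hypotheses and that every occurrence of $\Sigma$ or $\nabla\Sigma$ contributes a single factor of $\Csigma$, so that the bound is linear in $\Csigma$.
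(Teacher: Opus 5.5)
Your proposal is correct and follows the paper's proof essentially step for step. The paper also bounds $\vert\omega_k\vert$ pointwise by $\Csigma$ times sums of $\vert\nabla^2\phi_2\vert$, $\vert\nabla^3\phi_2\vert$ and $\vert\nabla\phi_2\vert\,\vert\nabla^2\phi_2\vert$, which are controlled via ($\Phi_2$3) and ($\Phi_2$4), and then uses Cauchy--Schwarz in $\mathbb{R}^{d_2}$ and Fubini's theorem to factor $\Vert Rf\Vert_H$ as $\Vert\omega\Vert_{L^2(\mu_2)}\,\Vert Q^*\gradx P_S f\Vert_{L^2(\mu_1)}$.
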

	
	\begin{proof}
		By ($\Sigma2$) we have
		\begin{equation*}
			\vert \omega_k \vert \leq \Csigma \sum_{i,j=1}^{d_2} \bigl(\vert \partial_j\partial_k \phi_2 \vert + \vert \partial_i\partial_j\partial_k \phi_2 \vert + \vert \partial_i \phi_2 \vert \, \vert \partial_j\partial_k\phi_2 \vert \bigr)
		\end{equation*}
		almost everywhere. So the integrability assumptions ($\Phi_2 3$) and ($\Phi_2 4$) yield that $\omega_k \in L^2(\mu_2)$ for all $k$ and that $\Vert \omega \Vert_{L^2(\mu_2)} \leq \Csigma C(\phi_2)$ for a constant $C(\phi_2)\in (0,\infty)$.
		
		Now, for $f\in \core$ the Cauchy-Schwarz-inequality and Fubini's theorem imply that
		\begin{equation*}
			\Vert Rf \Vert^2_H 
			\leq \int_{\Omega} \lvert \omega\rvert^2 \lvert Q^*\gradx P_S f\rvert^2 \,\dmu 
			= \Vert \omega \Vert_{L^2(\mu_2)}^2 \Vert Q^* \gradx P_S f\Vert_{L^2(\mu_1)}^2,
		\end{equation*}
		where $\vert Q^* \gradx P_S f \vert \in C_c(\Omega_1)\subset L^2(\mu_1)$.
	\end{proof}
	
	\begin{theorem}[Verification of (H4), 1st inequality]
		\label{thm:H4:IE1}
        For all $f\in \core$, it holds that
		\begin{equation}
			\label{eq:N1}
			\Vert BS(I-P)f \Vert_H \leq N_1 \Vert (I-P)f \Vert_H \quad\text{with}\quad N_1 = \Csigma C(\phi_2)/\sqrt{2\konst}.
		\end{equation}
	\end{theorem}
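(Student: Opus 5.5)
The plan is to apply Lemma \ref{lem:H4:suffcrit} with the operator $(S,D(S))$ in the role of $T$. Condition (G) holds by Theorem \ref{thm:G_selfadjoint}, and $\core\subset D(S)$ holds by definition. It therefore suffices to show three things: that $AP(\core)\subset D(S^*)$; that $S^*APf=-Rf$ with $R$ from Lemma \ref{lem:R}; and that $\Vert Rf\Vert_H\le N_1\Vert (I-G)f\Vert_H$ for $f\in\core$. Lemma \ref{lem:H4:suffcrit} then shows that $BS$ is bounded by $N_1$ on $D(S)$. Since $P(H)\subset D(S)$ by Lemma \ref{lem:D5D7}, we have $(I-P)f\in D(S)$ for $f\in\core$, and \eqref{eq:N1} follows immediately.

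\emph{Step 1 (identifying $S^*AP$).} Fix $f\in\core$ and set $h\defeq APf=-\nabla\phi_2\cdot(Q^*\gradx P_Sf)$, using Lemma \ref{lem:D5D7}. Since $\phi_2\in C^2(\Omega_2)$ and $P_Sf\in C_c^2(\Omega_1)$, we have $h\in C^1(\Omega)\cap H$. For $g\in\core$, Lemma \ref{lem:bilinearforms} gives
\begin{equation*}
\SPH{Sg}{h}=-\int_E(\Sigma\grady g)\cdot\grady h\,\dmu .
\end{equation*}
Here $\grady h=-(\nabla^2\phi_2)\,Q^*\gradx P_Sf$. The next move is to integrate by parts once more in $y$ against the density $e^{-\phi_2}$. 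This uses the weak third derivatives of $\phi_2$ from ($\Phi_2$4) and the weak derivatives of $\Sigma$ from ($\Sigma$2). There are no boundary terms, because $g$ has compact support in $\Omega$ and $\phi_2$ is locally $W^{3,1}$ on $\Omega_2$. Writing out the divergence $\sum_{i,j}\partial_i\bigl(\Sigma_{ij}\,\partial_j\partial_k\phi_2\,e^{-\phi_2}\bigr)e^{\phi_2}$ yields exactly $\omega_k$, and hence
\begin{equation*}
\SPH{Sg}{h}=-\int_E g\,\omega\cdot(Q^*\gradx P_Sf)\,\dmu=-\SPH{g}{Rf}.
\end{equation*}
Since $(S,D(S))$ is the closure of $(S,\core)$, this identity extends to all $g\in D(S)$. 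Together with $Rf\in H$ from Lemma \ref{lem:R}, it gives $h\in D(S^*)$ and $S^*APf=-Rf$. Conceptually, $S$ acting on $\partial_k\phi_2$ produces $\omega_k$, and the symmetry of $S$ lets this be read off on the level of $S^*$.

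\emph{Step 2 (the estimate).} By Lemma \ref{lem:R},
\begin{equation*}
\Vert Rf\Vert_H\le \Csigma C(\phi_2)\Vert Q^*\gradx P_Sf\Vert_{L^2(\mu_1)} .
\end{equation*}
By \eqref{eq:T} and Lemma \ref{lem:GT},
\begin{equation*}
\Vert Q^*\gradx P_Sf\Vert^2_{L^2(\mu_1)}=-\SP{TP_Sf}{P_Sf}{L^2(\mu_1)}=-\konst^{-1}\SPH{Gf}{f}.
\end{equation*}
Expanding the square gives
\begin{equation*}
\Vert (I-G)f\Vert_H^2=\Vert f\Vert_H^2-2\SPH{Gf}{f}+\Vert Gf\Vert_H^2\ge -2\SPH{Gf}{f},
\end{equation*}
and therefore $\Vert Q^*\gradx P_Sf\Vert_{L^2(\mu_1)}\le (2\konst)^{-1/2}\Vert (I-G)f\Vert_H$. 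Combining the two bounds yields $\Vert S^*APf\Vert_H\le N_1\Vert(I-G)f\Vert_H$ with $N_1=\Csigma C(\phi_2)/\sqrt{2\konst}$.

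I expect the main obstacle to be Step 1. The second integration by parts must be justified rigorously with only weak third derivatives of $\phi_2$ and Lipschitz $\Sigma$. This should be harmless locally because of the compact support of $g$: on $\supp g$ the weight $e^{-\phi_2}$ is bounded above and below, so the computation reduces to the standard weak-derivative product rule. The integrability hypotheses ($\Phi_2$3) and ($\Phi_2$4) are then needed only to ensure $Rf\in H$, which is exactly the content of Lemma \ref{lem:R}.
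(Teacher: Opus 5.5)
Your proposal is correct and follows the paper's proof essentially step for step: you apply Lemma~\ref{lem:H4:suffcrit} with $S$ in place of $T$, identify $S^*APf$ through $R$ from Lemma~\ref{lem:R} by a second integration by parts in $y$, and use the bound $2\konst\Vert Q^*\gradx P_Sf\Vert^2\le\Vert(I-G)f\Vert_H^2$, which you derive directly in $H$ rather than via $\Vert(I-\konst T)P_Sf\Vert_{L^2(\mu_1)}$ and \eqref{eq:I-kTvsI-G}. Your sign $S^*APf=-Rf$ is in fact the correct one, since the first line of \eqref{eq:S*AP} drops the minus sign from Lemma~\ref{lem:bilinearforms}; this is immaterial for the norm estimate.
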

	
	\begin{proof}
		Due to Lemma \ref{lem:H4:suffcrit},
        it suffices to show for all $f\in \core$ that
		\begin{equation}
			\label{eq:H4:IE1:intermediate}
			APf \in D(S^*), \quad S^*APf = Rf, \quad\text{and}\quad \Vert Rf\Vert_H \leq N_1 \Vert (I-G)f\Vert_H.
		\end{equation}
		Let $f, g\in \core$. Then $APf= -\nabla\phi_2\cdot Q^*\gradx P_S f \in C^1(\Omega)$ by Lemma \ref{lem:D5D7}. Thus Lemma \ref{lem:bilinearforms} and integration by parts yield   
		\begin{align}
				\SPH{Sg}{APf}
				&= \sum_{i,j=1}^{d_2} \int_{\Omega} (\dyi g) \, \Sigma_{ij} \, (\dyi[j] APf) \,\dmu \notag \\
				&= \sum_{i,j,k=1}^{d_2} \int_{\Omega} (\dyi g) \, \Sigma_{ij} \, (-\partial_j\partial_k \phi_2) (Q^*\gradx P_Sf)_k \,\dmu \notag \\
				&= \sum_{k=1}^{d_2} \int_{\Omega} g \, \omega_k \, (Q^*\gradx P_Sf)_k \,\dmu
				= \SPH{g}{Rf}. \label{eq:S*AP}
		\end{align}
		Since $\core$ is a core of $(S,D(S))$, \eqref{eq:S*AP} extends to all $g\in D(S)$. This proves the first half of \eqref{eq:H4:IE1:intermediate}. It remains to estimate $\Vert Rf\Vert_H$.
		
        We note that
	    $\Vert (I-\konst T) g\Vert^2_{L^2(\mu_1)} = \Vert g\Vert^2_{L^2(\mu_1)} - 2\konst \SP{Tg}{g}{L^2(\mu_1)} + \Vert \konst Tg \Vert^2_{L^2(\mu_1)}$
        for $g\in C_c^2(\Omega_1)$, and use \eqref{eq:T} to obtain
		\begin{equation*}
			2\konst  \Vert Q^* \gradx P_S f\Vert^2_{L^2(\mu_1)} = -2\konst \SP{TP_Sf}{P_Sf}{L^2(\mu_1)} \leq \Vert (I-\konst T)P_S f\Vert^2_{L^2(\mu_1)}.
		\end{equation*}
		We conclude from Lemma \ref{lem:R} that
		\begin{equation*}
			\Vert Rf \Vert_H \leq \Csigma C(\phi_2) \Vert Q^* \gradx P_S f\Vert_{L^2(\mu_1)} \leq N_1 \Vert (I-\konst  T)P_S f\Vert_{L^2(\mu_1)}.
		\end{equation*}
		By \eqref{eq:GT} and since $P_S=I$ on $P(H)\subset L^2(\mu_1)$, we have  $\konst TP_Sf = Gf = P_SGf$  and thus
		\begin{equation}
            \label{eq:I-kTvsI-G}
			\Vert (I-\konst T)P_S f\Vert_{L^2(\mu_1)} = \Vert P_S (I-G)f \Vert_{L^2(\mu_1)} \leq \Vert (I-G)f \Vert_H,
		\end{equation}
        which concludes the proof.
	\end{proof}
	
	\subsubsection{Hypocoercivity Assumption (H4) - Second Inequality}
	To verify the second inequality in (H4), we make use of elliptic a priori estimates from \cite{P3_Camrud_LangevinSingularPotentials} in an adapted, slightly generalized form.
	
	Let $\theta\in (0,1)$. Then ($\Phi_1$4) implies that there is a positive constant $\Ctheta \in (0,\infty)$ such that 
	\begin{equation*}
		\label{eq:def:Ctheta}
		\vert \Delta\phi_1\vert \leq \theta \vert \nabla\phi_1\vert^2 + \Ctheta\qquad \text{on } \Omega_1.
	\end{equation*}
	Set
	\begin{equation*}
		\label{eq:def:epsmax}
		\emax \defeq \frac{\lmin}{4 \Cq}  (1-\theta)^2,
	\end{equation*}
    where $\vert \cdot\vert_2$ denotes the spectral norm, and introduce for all $0<\varepsilon < \emax$ the constants
	\begin{equation*}
		\label{eq:def:xi_12}
		\begin{aligned}
			\xi_{\varepsilon, \theta}^{(1)} &\defeq \frac{1}{1-\frac{\varepsilon}{\emax}} \, \max\biggl(\frac{1}{\konst^2},\frac{\Cq}{2\konst}\Bigl(\varepsilon \frac{2\Ctheta}{1-\theta} + C_\varepsilon\Bigr)\biggr), \\
			\xi_{\varepsilon, \theta}^{(2)} &\defeq  \frac{1}{1-\frac{\varepsilon}{\emax}} \, \max\biggl(\frac{1}{\konst^2}\frac{\Cq^{-1}}{\emax},\frac{1}{2\konst}\Bigl(\frac{2\Ctheta}{1-\theta} + \frac{C_\varepsilon}{\emax}\Bigr)\biggr).
		\end{aligned}
	\end{equation*}
	Note that $\xi_{\varepsilon, \theta}^{(1)} > \konst^{-2}$ and $\xi_{\varepsilon, \theta}^{(2)} > 4 \konst^{-2} \lmin ^{-1}$. Hence the infima
	\begin{equation*}
		\label{eq:def:xi}
		\xi_1 \defeq \inf \xi_{\varepsilon, \theta}^{(1)} \qquad\text{and}\qquad \xi_2 \defeq \inf \xi_{\varepsilon, \theta}^{(2)}
	\end{equation*}
	with respect to all $\theta \in (0,1)$ and $0<\varepsilon< \emax$ define positive constants.
	
	\begin{theorem}
		\label{thm:apriori_estimates}
		For all $f\in C_c^2(\Omega_1)$, we have the estimates
        \begin{align}
            \lVert  Q^* \nabla^2 f \, Q \rVert_{L^2(\mu_1)}^2 
			&\leq \xi_1 \lVert (I-\konst T)f \rVert_{L^2(\mu_1)}^2, \label{eq:aprioriestimate:1} \\
            \bigl\lVert \lvert\nabla\phi_1\rvert \,\lvert  Q^* \nabla f \rvert \bigr\rVert_{L^2(\mu_1)}^2 
			&\leq \xi_2 \lVert (I-\konst T)f \rVert_{L^2(\mu_1)}^2. \label{eq:aprioriestimate:2}
        \end{align}
	\end{theorem}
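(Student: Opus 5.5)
Set $g\defeq(I-\konst T)f$ and expand $\Vert g\Vert^2=\Vert f\Vert^2+2\konst\Vert Q^*\nabla f\Vert^2+\konst^2\Vert Tf\Vert^2$, where we use \eqref{eq:T}; all norms are in $L^2(\mu_1)$. This gives $\konst^2\Vert Tf\Vert^2\le\Vert g\Vert^2$ and $2\konst\Vert Q^*\nabla f\Vert^2\le\Vert g\Vert^2$. Both estimates should follow from one integration-by-parts identity for $\Vert Tf\Vert^2$. Write $M\defeq QQ^*$, so that $Tf=\divergence(M\nabla f)-\nabla\phi_1\cdot M\nabla f$ with respect to $\mu_1$. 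For $f\in C_c^2(\Omega_1)$ the plan is to integrate by parts twice, in the spirit of the Bochner formula, with boundary terms vanishing by compact support. If $f$ is only $C^2$, one first approximates by $C_c^3$ functions or passes to mollified difference quotients. The outcome should be
\begin{equation*}
\Vert Tf\Vert^2=\Vert Q^*\nabla^2 f\,Q\Vert^2+\int_{\Omega_1}(M\nabla f)\cdot\nabla^2\phi_1\,(M\nabla f)\,\dmui{1}.
\end{equation*}
The first term on the right is the Frobenius norm of $Q^*\nabla^2 fQ$, which equals $\tr(M\nabla^2 f M\nabla^2 f)$.

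The Hessian term carries no sign and is the main obstacle, since $\phi_1$ is not convex. We control it with ($\Phi_1$4): $|\nabla^2\phi_1|\le\varepsilon|\nabla\phi_1|^2+C_\varepsilon$. This bounds it by $\Cq\int(\varepsilon|\nabla\phi_1|^2+C_\varepsilon)|Q^*\nabla f|^2\,\dmui{1}$, using $|M\nabla f|^2\le\Cq|Q^*\nabla f|^2$. We therefore need a second estimate that bounds $\int|\nabla\phi_1|^2|Q^*\nabla f|^2\,\dmui{1}$ by $\Vert Tf\Vert^2$, $\Vert Q^*\nabla^2f\,Q\Vert^2$ and lower-order terms. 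This is where the Camrud et al.\ idea with $\theta$ enters. Integrate by parts in the identity $\int|\nabla\phi_1|^2 h\,e^{-\phi_1}=\int(\Delta\phi_1 h+\nabla\phi_1\cdot\nabla h)e^{-\phi_1}$, or its analogue adapted to $M$, with $h=|Q^*\nabla f|^2$. Use $|\Delta\phi_1|\le\theta|\nabla\phi_1|^2+\Ctheta$ and absorb the $\theta$ part into the left, giving the factor $(1-\theta)$. Bound $\nabla h$ by $2|\nabla^2 f\,M\nabla f|$ and apply Young's inequality. Squaring the constants produces $(1-\theta)^2$, which yields roughly
\begin{equation*}
\bigl\Vert|\nabla\phi_1||Q^*\nabla f|\bigr\Vert^2\le\frac{2\Ctheta}{1-\theta}\Vert Q^*\nabla f\Vert^2+\frac{1}{\emax\Cq}\Vert Q^*\nabla^2f\,Q\Vert^2 .
\end{equation*}
Here the spectral constants $\lmin$ and $\Cq$ appear when converting between $\nabla f$, $Q^*\nabla f$ and $M\nabla f$, and this accounts for the shape of $\emax$.

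Inserting this bound into the identity gives
\begin{equation*}
\Vert Q^*\nabla^2fQ\Vert^2\bigl(1-\tfrac{\varepsilon}{\emax}\bigr)\le\Vert Tf\Vert^2+\Cq\bigl(\varepsilon\tfrac{2\Ctheta}{1-\theta}+C_\varepsilon\bigr)\Vert Q^*\nabla f\Vert^2 .
\end{equation*}
For $\varepsilon<\emax$, the bounds $\Vert Tf\Vert^2\le\konst^{-2}\Vert g\Vert^2$ and $\Vert Q^*\nabla f\Vert^2\le(2\konst)^{-1}\Vert g\Vert^2$ give \eqref{eq:aprioriestimate:1} with constant $\xi^{(1)}_{\varepsilon,\theta}$; taking the infimum over $\theta$ and $\varepsilon$ gives $\xi_1$. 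Feeding this back into the weighted bound yields \eqref{eq:aprioriestimate:2} with constant $\xi^{(2)}_{\varepsilon,\theta}$, and the infimum gives $\xi_2$.

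The delicate points are the following. The Bochner-type identity must be justified for merely $C^2$ data: $\phi_1\in C^2$ suffices, but $f\in C^2$ needs approximation. The weighted estimate must be rigorous despite the singularity of $\phi_1$; compact support of $f$ inside $\Omega_1$ makes all integrands bounded, so this is fine. Matching the exact constants requires choosing the Young parameters carefully.
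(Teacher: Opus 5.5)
Your plan is essentially the paper's proof. It uses the same integration-by-parts identity for $\Vert Tf\Vert^2$ controlled via ($\Phi_1$4), the same $\theta$-weighted estimate $\Vert |\nabla\phi_1|\,|Q^*\nabla f|\Vert^2\le \frac{2\Ctheta}{1-\theta}\Vert Q^*\nabla f\Vert^2+\frac{1}{\emax\Cq}\Vert Q^*\nabla^2 f\,Q\Vert^2$, the same cross-substitution of the two bounds, and a mollification step from smooth $f$ to $C_c^2$. For the weighted estimate, the paper uses $g_\delta=(|Q^*\nabla f|^2+\delta)^{1/2}$ instead of your direct choice $h=|Q^*\nabla f|^2$; yours works equally well and gives exactly the constant $\lmin^{-1}$ if you factor $|\nabla^2 f\,QQ^*\nabla f|\le|\nabla^2 f\,Q|\,|Q^*\nabla f|$.

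One small correction concerns the last step. To land exactly on $\xi^{(1)}_{\varepsilon,\theta}$ and $\xi^{(2)}_{\varepsilon,\theta}$, which contain a maximum, you must use the joint bound $\konst^2\Vert Tf\Vert^2+2\konst\Vert Q^*\nabla f\Vert^2\le\Vert(I-\konst T)f\Vert^2$. Applying your two separate bounds only yields the sum of the two constants inside the maximum.
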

	Before proving Theorem \ref{thm:apriori_estimates}, we establish two auxiliary lemmas. In these lemmas and throughout the proof of the theorem, we drop the subscript of the norm $\Vert \cdot \Vert_{L^2(\mu_1)}$. This improves readability and is justified by Remark \ref{rem:conventions}.
	
	\begin{lemma}
		\label{lem:apriori1}
		Let $f\in\ccinfty(\Omega_1)$ and $\varepsilon>0$, then
		\begin{equation}
			\label{eq:apriori:1}
			\Vert Q^*\nabla^2 f \, Q \Vert^2 \leq \Vert Tf \Vert^2 + \varepsilon \Cq \, \bigl\Vert 	\vert\nabla\phi_1\vert \, \vert Q^*\nabla f\vert \bigr\Vert^2 + C_\varepsilon \Cq \, \Vert Q^*\nabla f \Vert^2,
		\end{equation}
		where $T= \sum_{i,j=1}^{d_1}(QQ^*)_{ij} (\partial_i\partial_j-\partial_i\phi_1\partial_j)$ is the operator from Section \ref{subsubsec:G}.
	\end{lemma}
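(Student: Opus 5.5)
The plan is to prove \eqref{eq:apriori:1} through a Bochner-type identity for $\Vert Tf\Vert^2$, obtained by integrating by parts twice in $L^2(\mu_1)$. We write $T$ as a sum of squares of constant-coefficient vector fields. For $k=1,\ldots,d_2$ let $V_k \defeq \sum_{i=1}^{d_1} Q_{ik}\partial_i$, so that $(Q^*\nabla f)_k = V_k f$ and $(Q^*\nabla^2 f\,Q)_{kl} = V_kV_l f$. For $f,g\in C_c^1(\Omega_1)$, integration by parts against $e^{-\phi_1}$ gives the formal adjoint
\begin{equation*}
V_k^\dagger g = -V_k g + (V_k\phi_1)\,g .
\end{equation*}
Hence $T = -\sum_k V_k^\dagger V_k$ on $\ccinfty(\Omega_1)$; this is consistent with \eqref{eq:T}. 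The fields $V_k$ have constant coefficients, so they commute, and the only nontrivial commutator is
\begin{equation*}
[V_l, V_k^\dagger] = (V_lV_k\phi_1),
\end{equation*}
understood as a multiplication operator. This function is continuous on $\Omega_1$ because $\phi_1\in C^2(\Omega_1)$ by ($\Phi_1$1).

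Next we expand the norm. Since $f\in\ccinfty(\Omega_1)$, every function below is compactly supported in $\Omega_1$ and at most three derivatives of $f$ and two of $\phi_1$ appear, so all integrations by parts are legitimate. We compute
\begin{equation*}
\Vert Tf\Vert^2 = \sum_{k,l}\SP{V_k^\dagger V_k f}{V_l^\dagger V_l f}{L^2(\mu_1)} = \sum_{k,l}\SP{V_lV_k^\dagger V_k f}{V_l f}{L^2(\mu_1)}.
\end{equation*}
Commuting $V_l$ past $V_k^\dagger$ with the identity above gives
\begin{equation*}
\Vert Tf\Vert^2 = \sum_{k,l}\Vert V_lV_k f\Vert^2 + \int_{\Omega_1} (Q^*\nabla f)\cdot (Q^*\nabla^2\phi_1\, Q)(Q^*\nabla f)\,\dmui{1}.
\end{equation*}
The first sum is exactly $\Vert Q^*\nabla^2 f\,Q\Vert^2$, with the Frobenius norm taken pointwise.

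Finally we bound the cross term from below. Pointwise we have $|Q^*\nabla^2\phi_1 Q|_2\le |Q|_2^2\,|\nabla^2\phi_1| = \Cq\,|\nabla^2\phi_1|$, because any matrix norm used for $|\nabla^2\phi_1|$ dominates the spectral norm. Inserting ($\Phi_1$4) gives
\begin{equation*}
\int_{\Omega_1} (Q^*\nabla f)\cdot (Q^*\nabla^2\phi_1\, Q)(Q^*\nabla f)\,\dmui{1} \geq -\Cq\int_{\Omega_1}\bigl(\varepsilon|\nabla\phi_1|^2 + C_\varepsilon\bigr)|Q^*\nabla f|^2\,\dmui{1}.
\end{equation*}
Rearranging the identity from the previous step then yields \eqref{eq:apriori:1}.

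The main point needing care is the commutator bookkeeping: the sign of the $V_k\phi_1$ term in $V_k^\dagger$ must be tracked correctly, and the Hessian of $\phi_1$ must enter only through $Q^*\nabla^2\phi_1 Q$ and not through $\nabla^2\phi_1$ alone. Everything else is routine. The restriction to $\ccinfty$ is only used so that third derivatives of $f$ exist; passing to $C_c^2$ is presumably done later by approximation.
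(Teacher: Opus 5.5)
Your proof is correct and follows essentially the same route as the paper. Both derive the Bochner-type identity $\Vert Tf\Vert^2 = \Vert Q^*\nabla^2 f\,Q\Vert^2 + \int_{\Omega_1} (QQ^*\nabla f)\cdot \nabla^2\phi_1\,(QQ^*\nabla f)\,\dmui{1}$ by integrating by parts twice, and then bound the Hessian term pointwise using ($\Phi_1$4) and $\vert Q\vert_2^2 = \Cq$; your vector fields $V_k$ and the commutator $[V_l,V_k^\dagger]=V_lV_k\phi_1$ are a tidier packaging of the paper's index-wise integration by parts.
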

	\begin{proof}
		First, a simple computation shows that
		\begin{multline*}
			\bigl\Vert Q^*\nabla^2 f \, Q \bigr\Vert_{L^2(\mu_1)}^2 
			= \int_{\Omega_1} \sum_{i,j=1}^{d_2} \Bigl(\sum_{k,l=1}^{d_1} Q_{il}^* \partial_l\partial_k f Q_{kj}\Bigr) \Bigl(\sum_{m,n=1}^{d_1} Q_{in}^* \partial_n\partial_m f Q_{mj}\Bigr) \,\dmui{1} \\
			= \sum_{l,n} \sum_{k,m} (QQ^*)_{ln} \,(QQ^*)_{km} \int_{\Omega_1} \partial_l\partial_k f \partial_n\partial_m f \,\dmui{1}.
		\end{multline*}
		Using integration by parts repeatedly, we obtain that
		\begin{multline*}
			\int_{\Omega_1} \partial_l\partial_k f \partial_n\partial_m f \,\dmui{1} \\
			= \SP{\partial_l\partial_n f-\partial_l f \partial_n\phi_1}{\partial_k\partial_m f-\partial_m f \partial_k \phi_1} {L^2(\mu_1)} 
			- \int_{\Omega_1} \partial_m f\partial_l f \, \partial_k\partial_n \phi_1 \,\dmui{1}.
		\end{multline*}
		Inserting this expression above, we conclude that
		\begin{equation*}
			\bigl\Vert Q^*\nabla^2 f \, Q \bigr\Vert_{L^2(\mu_1)}^2
			= \Vert Tf \Vert_{L^2(\mu_1)}^2 - \int_{\Omega_1} \SP{QQ^* \nabla f}{\nabla^2\phi_1 \,QQ^*\nabla f}{\mathbb{R}^{d_1}} \,\dmui{1}. 
		\end{equation*}
		Finally, assumption ($\Phi_1$4) allows to estimate that
		\begin{multline*}
			\Bigl\lvert \int_{\Omega_1} \SP{QQ^* \nabla f}{\nabla^2\phi_1 \,QQ^*\nabla f}{\mathbb{R}^{d_1}} \,\dmui{1} \Bigr\rvert
			\leq \int_{\Omega_1} \vert QQ^* \nabla f \vert\, \vert \nabla^2\phi_1 \vert \, \vert QQ^*\nabla f \vert \,\dmui{1} \\
			\leq \int_{\Omega_1} (\varepsilon \vert\nabla\phi_1\vert^2+C_\varepsilon) \vert QQ^*\nabla f\vert^2 \,\dmui{1} 
			\leq \Cq  \,\bigl( \varepsilon \,\bigl\Vert \vert\nabla\phi_1\vert \, \vert Q^*\nabla f\vert \bigl \Vert^2 +\, C_\varepsilon \,\Vert Q^*\nabla f \Vert^2\bigr),
		\end{multline*}
		which yields the claim.
	\end{proof}
	
	\begin{lemma}
		\label{lem:apriori2}
		Let $f\in \ccinfty(\Omega_1)$ and $\theta\in (0,1)$, then
		\begin{equation}
			\label{eq:apriori:4}
			\bigl\Vert \vert Q^*\nabla f\vert \,\vert\nabla\phi_1\vert \bigl\Vert^2
			\leq \frac{4(1-\theta)^{-2}}{\lmin}\Vert Q^* \nabla^2 f Q \Vert^2 + \frac{2\Ctheta}{1-\theta} \Vert Q^*\nabla f \Vert^2.
		\end{equation}
	\end{lemma}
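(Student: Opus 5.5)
The plan is a single integration by parts against the Gibbs weight, which trades the factor $\vert\nabla\phi_1\vert^2$ for a Laplacian of $\phi_1$ and a first derivative of $\vert Q^*\nabla f\vert^2$. These are then controlled by the bound $\vert\Delta\phi_1\vert\leq\theta\vert\nabla\phi_1\vert^2+\Ctheta$ and by Young's inequality.

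\emph{Step 1 (integration by parts).} Set $g\defeq\vert Q^*\nabla f\vert^2$. Since $f\in\ccinfty(\Omega_1)$, $g$ is smooth with compact support in $\Omega_1$, and $\phi_1\in C^2(\Omega_1)$. Hence every integral below is finite, and in particular
\[
X\defeq\bigl\Vert \vert Q^*\nabla f\vert\,\vert\nabla\phi_1\vert\bigr\Vert<\infty .
\]
Using $\vert\nabla\phi_1\vert^2e^{-\phi_1}=-\nabla\phi_1\cdot\nabla e^{-\phi_1}$ and integrating by parts (no boundary terms), I get
\[
X^2=\int_{\Omega_1} g\,\vert\nabla\phi_1\vert^2\,\dmui{1}=\int_{\Omega_1}\bigl(\nabla g\cdot\nabla\phi_1+g\,\Delta\phi_1\bigr)\,\dmui{1}.
\]

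\emph{Step 2 (bounding the two terms).} For the second term, the consequence of ($\Phi_1$4) recorded before Theorem \ref{thm:apriori_estimates} gives
\[
\int_{\Omega_1} g\,\Delta\phi_1\,\dmui{1}\leq\theta X^2+\Ctheta\Vert Q^*\nabla f\Vert^2 .
\]
For the first term, $\nabla g=2\,\nabla^2 f\,QQ^*\nabla f$, so pointwise
\[
\vert\nabla g\cdot\nabla\phi_1\vert\leq 2\,\vert\nabla^2 f\,Q\vert\,\vert Q^*\nabla f\vert\,\vert\nabla\phi_1\vert .
\]
Next I use $\vert Q^*v\vert^2=(v,QQ^*v)\geq\lmin\vert v\vert^2$ for $v\in\mathbb{R}^{d_1}$, applied column-wise to the matrix $\nabla^2 f\,Q$ (whose columns lie in $\mathbb{R}^{d_1}$), together with symmetry of $\nabla^2 f$. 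This gives $\vert\nabla^2 f\,Q\vert^2\leq\lmin^{-1}\vert Q^*\nabla^2 f\,Q\vert^2$ in the Frobenius norm. This matrix comparison is the only slightly delicate point; one must take the norm convention to be the Frobenius norm, consistent with Lemma \ref{lem:apriori1}. Cauchy--Schwarz in $L^2(\mu_1)$ then yields
\[
\int_{\Omega_1}\vert\nabla g\cdot\nabla\phi_1\vert\,\dmui{1}\leq 2a X,\qquad a\defeq\lmin^{-1/2}\Vert Q^*\nabla^2 f\,Q\Vert .
\]

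\emph{Step 3 (absorption).} Combining the two bounds gives
\[
(1-\theta)X^2\leq 2aX+\Ctheta\Vert Q^*\nabla f\Vert^2 .
\]
Young's inequality in the form $2aX\leq\tfrac{1-\theta}{2}X^2+\tfrac{2a^2}{1-\theta}$ lets me absorb half of the left-hand side, since $X<\infty$:
\[
\tfrac{1-\theta}{2}X^2\leq\tfrac{2a^2}{1-\theta}+\Ctheta\Vert Q^*\nabla f\Vert^2 .
\]
Multiplying by $2/(1-\theta)$ gives exactly the claimed bound
\[
X^2\leq\frac{4(1-\theta)^{-2}}{\lmin}\Vert Q^*\nabla^2 f\,Q\Vert^2+\frac{2\Ctheta}{1-\theta}\Vert Q^*\nabla f\Vert^2 .
\]
Apart from the matrix comparison in Step 2, everything is routine bookkeeping.
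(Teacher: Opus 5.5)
Your proof is correct and gives exactly the stated constants, but the route differs from the paper's in a way worth noting. The paper first proves a general weighted inequality $\Vert g\nabla\phi_1\Vert^2\leq \frac{4}{(1-\theta)^2}\Vert\nabla g\Vert^2+\frac{2\Ctheta}{1-\theta}\Vert g\Vert^2$ for $g\in C_c^2(\Omega_1)$. It uses the same integration by parts, the same bound on $\Delta\phi_1$, and Young's inequality with $\eta=\frac{1-\theta}{2}$. It then extends this inequality via the cutoffs $\chi_n(\phi_1)g$ to bounded $C^2$ functions with $\vert\nabla g\vert\in L^2(\mu_1)$; that dominated-convergence step uses $\vert\nabla\phi_1\vert\in L^2(\mu_1)$. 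Finally it applies the inequality to the regularization $g_\delta=(\vert Q^*\nabla f\vert^2+\delta)^{1/2}$ and lets $\delta\to 0$. The detour is needed because $\vert Q^*\nabla f\vert$ is not differentiable where $Q^*\nabla f$ vanishes, and $g_\delta\geq\sqrt{\delta}$ is not compactly supported.

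You instead integrate by parts against the square $\vert Q^*\nabla f\vert^2$, which is smooth with compact support in $\Omega_1$. This makes the regularization, the cutoff extension and the integrability assumption ($\Phi_1$3) unnecessary. Your pointwise bound $\vert\nabla\vert Q^*\nabla f\vert^2\vert\leq 2\,\vert\nabla^2 f\,Q\vert\,\vert Q^*\nabla f\vert$ plays the role of the paper's $\vert\nabla g_\delta\vert\leq\vert Q^*\nabla^2 f\vert$, and your column-wise comparison with $\lmin$ matches the paper's. The paper's route isolates a reusable weighted inequality valid for general $g$. Yours is shorter and self-contained for the specific weight needed here.
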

	\begin{proof}
		Let $g\in C_c^2(\Omega_1)$. Integration by parts yields
		\begin{equation*}
			\Vert g\nabla\phi_1 \Vert^2
			= \int_{\Omega_1} 2g \nabla g \cdot \nabla\phi_1 \,\dmui{1} + \int_{\Omega_1} g^2 \Delta\phi_1 \,\dmui{1}.
		\end{equation*}
		Here, by Cauchy's inequality
		\begin{equation*}
			\Bigl\vert \int_{\Omega_1} 2g \nabla g \cdot \nabla\phi_1 \,\dmui{1} \Bigr\vert  
			\leq 2 \lVert \eta^{-1/2} \nabla g \Vert \, \Vert \eta^{1/2} g \nabla\phi_1 \Vert
			\leq \eta^{-1} \Vert \nabla g \Vert^2 + \eta \Vert g\nabla\phi_1 \Vert^2
		\end{equation*}
		holds for any $\eta\in (0,1-\theta)$, and 
		\begin{equation*}
			\Bigl\vert \int_{\Omega_1} g^2 \Delta\phi_1 \,\dmui{1} \Bigr\vert \leq \int_{\Omega_1} g^2 (\theta \vert \nabla\phi_1 \vert^2 + \Ctheta ) \,\dmui{1} = \theta \Vert g\nabla\phi_1 \Vert^2 + \Ctheta \Vert g \Vert^2.
		\end{equation*}
		Choosing $\eta = \frac{1-\theta}{2}$ and rearranging the terms, we conclude that
		\begin{equation}
			\label{eq:apriori:2}
			\Vert g\nabla\phi_1 \Vert^2 \leq \frac{4}{(1-\theta)^2} \Vert \nabla g\Vert^2 + \frac{2\Ctheta}{1-\theta} \Vert g\Vert^2.
		\end{equation}
		
		Next, let $g\in C^2(\Omega_1)$ be bounded and satisfy $\vert \nabla g\vert \in L^2(\mu_1)$. Then \eqref{eq:apriori:2} still holds, since by dominated convergence the approximating sequence $(\chi_n(\phi_1) g)_{n\in\mathbb{N}}$ from $C_c^2(\Omega_1)$ satisfies 
		\begin{equation*}
			\Vert \chi_n(\phi_1)g\Vert \to \Vert g\Vert, \quad \Vert \chi_{n}(\phi_1)g\nabla\phi_1 \Vert \to \Vert g\nabla\phi_1 \Vert \quad\text{and}\quad \Vert \nabla (\chi_n(\phi_1)g)\Vert \to \Vert \nabla g\Vert
		\end{equation*}
		as $n\to\infty$. Recall Lemma \ref{lem:cutoffs} for the definition of the cutoff functions $(\chi_n(\phi_1))_{n\in\mathbb{N}}$.
		
		Now, let $f\in\ccinfty(\Omega_1)$ and set $g_\delta \defeq (\vert Q^*\nabla f\vert^2 + \delta)^{1/2}$ for $\delta>0$. Then $g_\delta$ lies in $C^2(\Omega_1)$, is bounded, and satisfies $\vert \nabla g_\delta \vert \in L^2(\mu_1)$.
		Consequently,
		\begin{equation}
			\label{eq:apriori:3.5}
			\Vert g_\delta \nabla\phi_1 \Vert^2 \leq \frac{4}{(1-\theta)^2} \Vert \nabla g_\delta \Vert^2 + \frac{2\Ctheta}{1-\theta} \Vert g_\delta\Vert^2.
		\end{equation}
		Since $\vert Q^*\nabla f\vert < g_\delta$, we calculate that
		\begin{equation*}
			\lvert \partial_i g_\delta \rvert  = \frac{1}{g_\delta} \lvert (Q^*\nabla f)\cdot (Q^*\nabla\partial_i f)\rvert \leq \lvert Q^*\nabla\partial_i f \rvert,
		\end{equation*}
		which implies that $\vert \nabla g_\delta \vert^2 \leq \vert Q^*\nabla^2 f\vert^2 \leq \vert Q^* \nabla^2 f Q \vert^2 \, \lmin^{-1}$. 
		With \eqref{eq:apriori:3.5} we conclude that
		\begin{equation*}
			\bigl\Vert \vert Q^*\nabla f\vert \, \vert\nabla\phi_1\vert \bigr\Vert^2 \leq \Vert g_\delta \nabla\phi_1 \Vert^2 \leq \frac{4 (1-\theta)^{-2}}{\lmin} \Vert Q^* \nabla^2 f Q \Vert^2 + \frac{2\Ctheta}{1-\theta} \Vert g_\delta \Vert^2.
		\end{equation*}
		The claim follows since $g_\delta$ converges to $\vert Q^*\nabla f\vert $ in $L^2(\mu_1)$ as $\delta \to 0$.
	\end{proof}
	
	\begin{proof}[Proof of Theorem \ref{thm:apriori_estimates}]
		First, let $f\in \ccinfty(\Omega_1)$. Using equation \eqref{eq:T}, we observe that 
        \begin{align}
			\Vert (I-\konst T) f\Vert^2 &= \Vert f\Vert^2 - 2\konst\SP{Tf}{f}{L^2(\mu_1)} + \konst^2 \Vert Tf \Vert^2 \notag \\
			&= \Vert f \Vert^2 + 2\konst\Vert  Q^*\nabla f\Vert^2 + \konst^2 \Vert Tf \Vert^2. \label{eq:apriori:H56}
        \end{align}
		To obtain the first estimate, we let $\theta\in (0,1)$, $\varepsilon\in (0,\emax)$ and apply \eqref{eq:apriori:4} to the right-hand side of \eqref{eq:apriori:1}. Rearranging terms, this yields
		\begin{equation*}
			\label{eq:apriori:n1}
			a_1 \Vert  Q^* \nabla^2 f  Q \Vert^2
			\leq \Vert Tf \Vert^2  + a_2 \Vert  Q^*\nabla f \Vert^2 \\
			\leq \max\Bigl(\frac{1}{\konst^{2}},\frac{a_2}{2\konst}\Bigr) \bigl(\konst^2 \Vert Tf\Vert^2 + 2\konst \Vert  Q^*\nabla f\Vert^2\bigr),
		\end{equation*}
		where $a_1 = 1-\varepsilon/\emax>0$ and $a_2 = (2 \varepsilon \Ctheta (1-\theta)^{-1} + C_\varepsilon) \Cq$.
		With \eqref{eq:apriori:H56} we conclude that
		\begin{equation}
            \label{eq:apriori:n1+}
			\Vert  Q^* \nabla^2 f  Q \Vert^2 
			\leq a_1^{-1} \max\Bigl(\frac{1}{\konst^{2}},\frac{a_2}{2\konst}\Bigr) \Vert (I-\konst T) f\Vert^2 = \xi_{\varepsilon,\theta}^{(1)} \Vert (I-\konst T) f\Vert^2.
		\end{equation}
        If instead we apply \eqref{eq:apriori:1} to the right-hand side of \eqref{eq:apriori:4}, after rearranging we obtain
        \begin{align*}
			\label{eq:apriori:n2}
			a_1 \bigl\Vert \vert Q^*\nabla f\vert \vert\nabla\phi_1\vert \bigr\Vert^2 
			&\leq a_3 \Vert Tf \Vert^2 + a_4 \Vert  Q^*\nabla f \Vert^2 \\
			&\leq \max\Bigl(\frac{a_3}{\konst^{2}},\frac{a_4}{2\konst}\Bigr) \bigl(\konst^2 \Vert Tf\Vert^2 + 2\konst \Vert  Q^*\nabla f\Vert^2\bigr),
		\end{align*}
		where $a_3 = 4 \lmin^{-1} (1-\theta)^{-2} = (\emax \Cq)^{-1}$ and $a_4= 2\Ctheta (1-\theta)^{-1} + C_\varepsilon / \emax $.
 		Again using \eqref{eq:apriori:H56}, we get
		\begin{equation}
            \label{eq:apriori:n2+}
			\bigl\Vert \vert \nabla\phi_1\vert\, \vert Q^* \nabla f\vert \bigr\Vert^2
			\leq a_1^{-1} \max\Bigl(\frac{a_3}{\konst^{2}},\frac{a_4}{2\konst}\Bigr) \Vert (I-\konst T) f\Vert^2 = \xi_{\varepsilon,\theta}^{(2)} \Vert (I-\konst T) f\Vert^2.
		\end{equation}
		Taking the infimum in \eqref{eq:apriori:n1+} and \eqref{eq:apriori:n2+} proves the  estimates \eqref{eq:aprioriestimate:1} and \eqref{eq:aprioriestimate:2} for $f\in \ccinfty(\Omega_1).$
        
		Now, let $f\in C_c^2(\Omega_1)$. By convolution with an approximate identity, there is a sequence $(f_n)_{n\in\mathbb{N}}$ in $\ccinfty(\Omega_1)$ such that $\partial^\alpha f_n\to \partial^\alpha f$ uniformly as $n\to\infty$ for all multi-indices $\alpha$ of order $\vert\alpha\vert\leq 2$ and such that the support of $f$ and every $f_n$ is contained in a fixed compact set $K\subset \Omega_1$. On $K$ the continuous functions $\partial_i\phi_1$ are bounded, so
		\begin{equation}
			\label{eq:T:altcore}
			Tf_n = \sum_{i,j=1}^{d_1}( Q Q^*)_{ij} (\partial_i\partial_jf_n-\partial_i\phi_1\partial_jf_n) \to 
			\sum_{i,j=1}^{d_1}( Q Q^*)_{ij} (\partial_i\partial_jf-\partial_i\phi_1\partial_jf) =
			Tf
		\end{equation}
		uniformly as $n\to\infty$. Similarly, $(I-\konst T)f_n \to (I-\konst T)f$, 
		\begin{equation*}
			\lvert\nabla\phi_1\rvert^2 \lvert  Q^* \nabla f_n \rvert^2 \to \lvert\nabla\phi_1\rvert^2 \lvert  Q^* \nabla f \rvert^2 \qquad\text{and}\qquad
			\lvert  Q^* \nabla^2 f_n \, Q \rvert^2 \to 	\lvert  Q^* \nabla^2 f \, Q \rvert^2
		\end{equation*}
		uniformly as $n\to\infty$.
		Since $\mu_1$ is finite, uniform convergence implies convergence in $L^2(\mu_1)$. Hence the estimates \eqref{eq:aprioriestimate:1} and \eqref{eq:aprioriestimate:2} carry over to $f\in C_c^2(\Omega_1)$.
	\end{proof}

	\begin{theorem}[Verification of (H4), 2nd inequality]
		\label{thm:H4:IE2}
		The estimate $\Vert BA(I-P)f \Vert_H \leq N_2 \Vert (I-P)f \Vert_H$ holds for all $f\in \core$ and
		\begin{equation}
			\label{eq:N2}
			N_2 = \xi_1^{1/2} \bigl\Vert \vert\nabla\phi_2\vert^2\bigr\Vert_{L^2(\mu_2)} + \xi_2^{1/2} \Vert  Q\nabla^2\phi_2\Vert_{L^2(\mu_2)}.
		\end{equation}
	\end{theorem}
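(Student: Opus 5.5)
Following the proof of Theorem \ref{thm:H4:IE1}, we apply Lemma \ref{lem:H4:suffcrit} with $T=A$. Since $A$ is closed and antisymmetric, $A^*\supset -A$, and by Lemma \ref{lem:D5D7} iii) we have $AP(\core)\subset D(A)\subset D(A^*)$ with $A^*APf=-A^2Pf$. Because (G) holds by Theorem \ref{thm:G_selfadjoint}, it suffices to prove
\begin{equation*}
\Vert A^2Pf\Vert_H \leq N_2 \Vert (I-G)f\Vert_H \qquad\text{for all } f\in\core.
\end{equation*}
Lemma \ref{lem:H4:suffcrit} then bounds $(BA,D(A))$ by $N_2$. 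Since $(I-P)\core\subset D(A)$ (it consists of $\core$ plus elements of $P(\core)\subset D(A)$), this gives $\Vert BA(I-P)f\Vert_H\le N_2\Vert(I-P)f\Vert_H$.

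For the key estimate, write $A^2Pf$ via \eqref{eq:AAP} as the difference of two terms and use the triangle inequality. The first term is $(Q\nabla\phi_2)\cdot(\nabla_x^2 P_Sf\,Q\nabla\phi_2)=\nabla\phi_2\cdot(Q^*\nabla_x^2P_Sf\,Q)\nabla\phi_2$. Its pointwise absolute value is at most $\vert\nabla\phi_2\vert^2\,\vert Q^*\nabla_x^2P_Sf\,Q\vert$, using the Frobenius norm, which dominates the operator norm. By Fubini its $H$-norm is at most $\Vert\vert\nabla\phi_2\vert^2\Vert_{L^2(\mu_2)}\Vert Q^*\nabla^2P_SfQ\Vert_{L^2(\mu_1)}$, which is finite by ($\Phi_2$3). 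The second term is $(Q^*\nabla\phi_1)\cdot(\nabla^2\phi_2\,Q^*\nabla_xP_Sf)=\nabla\phi_1\cdot (Q\nabla^2\phi_2)(Q^*\nabla_xP_Sf)$. It is bounded pointwise by $\vert\nabla\phi_1\vert\,\vert Q\nabla^2\phi_2\vert\,\vert Q^*\nabla_xP_Sf\vert$, so its $H$-norm is at most $\Vert Q\nabla^2\phi_2\Vert_{L^2(\mu_2)}\,\Vert\vert\nabla\phi_1\vert\vert Q^*\nabla P_Sf\vert\Vert_{L^2(\mu_1)}$.

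Next, apply Theorem \ref{thm:apriori_estimates} to $P_Sf\in C_c^2(\Omega_1)$ (Lemma \ref{lem:PS:diff}). Estimate \eqref{eq:aprioriestimate:1} controls the first factor by $\xi_1^{1/2}\Vert(I-\konst T)P_Sf\Vert$, and \eqref{eq:aprioriestimate:2} controls the second by $\xi_2^{1/2}\Vert(I-\konst T)P_Sf\Vert$. Finally, \eqref{eq:I-kTvsI-G} gives $\Vert(I-\konst T)P_Sf\Vert_{L^2(\mu_1)}\le\Vert(I-G)f\Vert_H$, which yields exactly the constant $N_2$ in \eqref{eq:N2}.

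The substantive difficulty, the elliptic a priori estimates, is already handled in Theorem \ref{thm:apriori_estimates}. What remains delicate is bookkeeping of the matrix norms, so that the Frobenius norms in the pointwise bounds match those used in Theorem \ref{thm:apriori_estimates}. One must also justify the identification $A^*AP f=-A^2Pf$ on $\core$ in the domain sense required by Lemma \ref{lem:H4:suffcrit}, which follows from the antisymmetry of the closed operator $A$ together with Lemma \ref{lem:D5D7} iii).
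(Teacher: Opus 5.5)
Your proposal is correct and matches the paper's proof step by step. It applies Lemma~\ref{lem:H4:suffcrit} with $T=A$, uses that $A^*$ extends $-A$ so that $A^*APf=-A^2Pf$ via Lemma~\ref{lem:D5D7}, bounds the two terms of \eqref{eq:AAP} pointwise and factorizes via Fubini, invokes Theorem~\ref{thm:apriori_estimates}, and closes with \eqref{eq:I-kTvsI-G}. Your remark that $(I-P)(\core)\subset D(A)$ spells out a step the paper leaves implicit.
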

	
	\begin{proof}
		Since the hypocoercivity condition (G) has been verified in Theorem \ref{thm:G_selfadjoint}, we can use Lemma \ref{lem:H4:suffcrit}. Thus it suffices to show that $AP(\core)\subset D(A^*)$ and 
		\begin{equation}
        \label{eq:H4IE2 goal}
			\Vert A^*APf\Vert \leq N_2 \Vert (I-G)f\Vert \qquad\text{for all }f\in \core.
		\end{equation}
		
		Let $f\in \core$. As $(A,D(A))$ is antisymmetric, $(A^*,D(A^*))$ extends $(-A,D(A))$. Hence $APf\in D(A)\subset D(A^*)$ by Lemma \ref{lem:D5D7}, and $A^*APf = -A^2Pf$ is given by \eqref{eq:AAP}. Thus we obtain that
		\begin{equation*}
			\lVert A^* A P f\rVert 
			\leq \bigl\lVert \lvert \nabla\phi_2 \rvert^2 \cdot \lvert Q^* \gradx^2( P_S f) \, Q\rvert \bigr\rVert
			+ \bigl\lVert \lvert Q\nabla^2\phi_2 \rvert \cdot \lvert \nabla\phi_1\rvert \lvert Q^* \gradx P_S f\rvert \bigr\rVert.
		\end{equation*}
		Using Fubini's theorem, the right-hand side can be identified as
		\begin{equation*}
			\bigl\Vert  Q^*\gradx^2( P_S f) \, Q \bigr\Vert_{L^2(\mu_1)}
			\bigl\Vert \vert\nabla\phi_2\vert^2\bigr\Vert_{L^2(\mu_2)} 
			+ \bigl\Vert \vert \nabla\phi_1\vert \vert Q^*\gradx P_S f\vert \bigr\Vert_{L^2(\mu_1)} 
			\bigl\Vert  Q\nabla^2\phi_2\bigr\Vert_{L^2(\mu_2)},
		\end{equation*}
		so the a priori estimates from Theorem \ref{thm:apriori_estimates} yield
		\begin{equation*}
			\label{eq:H4:2:int}
				\Vert A^* APf \Vert
				\leq \bigl( \xi_1^{1/2} \bigl\Vert \vert\nabla\phi_2\vert^2\bigr\Vert 
				+  \xi_2^{1/2} \Vert  Q\nabla^2\phi_2\Vert
                \bigr)
				 \;\Vert (I-\konst T) P_S f \Vert.
		\end{equation*}
        Therefore, \eqref{eq:H4IE2 goal} follows by applying the estimate \eqref{eq:I-kTvsI-G}.
	\end{proof}
	
	\subsection{Main Result and Convergence Rate}
	\label{subsec:main_result}
	
	We are now able to apply Theorem \ref{thm:hypocoercivity:abstract}, the abstract hypocoercivity theorem, to conclude our main result. The computation of the convergence rate is analogous to \cite{Bertram_Grothaus_HypocoercivityLangevinMultipicativeNoise}.
	\begin{proof}[Proof of Theorem \ref{thm:main:abstract}]
		Below, as in Section \ref{subsec:hypocoercivity_conditions}, we assume that $\phi_2$ is radially symmetric, i.e., $\tau=I$ and $b=0$. The general result is then obtained via the coordinate transformation $y\mapsto \tau^{-1}(y+b)$ as detailed in Appendix~B.
        
		If $\tau=I$ and $b=0$, it follows from Section \ref{subsec:hypocoercivity_conditions} that the hypocoercivity conditions are satisfied for constants $\Lambda_m$, $\Lambda_M$, $N_1$ and $N_2$ as in \eqref{eq:Lambda_m}, \eqref{eq:Lambda_M}, \eqref{eq:N1} and \eqref{eq:N2}. Hence Theorem \ref{thm:hypocoercivity:abstract} applies. To specify the convergence rate, we choose explicit values for $\varepsilon, \delta \in (0,1)$ and $\kappa \in (0,\infty)$ satisfying \eqref{eq:convrate:con1and2}.
		
		Note that
		\begin{equation}
			\label{eq:sigma_dependence}
			\Lambda_m = \Cell \Lambda_m^\prime \qquad\text{and}\qquad N_1 = \Csigma N_1^\prime
		\end{equation}
		for positive constants $\Lambda_m^\prime$ and $N_1^\prime$ independent of $\Sigma$, while $\Lambda_M$ and $N_2$ do not depend on $\Sigma$ at all.
		Setting 
		\begin{equation*}
			\delta \defeq \frac{\Lambda_M}{1+\Lambda_M} \frac{1}{1+N_1+N_2} \in (0,1),
		\end{equation*} 
		the inequalities in \eqref{eq:convrate:con1and2} take the form
		\begin{equation}
			\label{eq:conv_crit_simple}
			\Cell \, \Lambda_m^\prime - \varepsilon \, r(\Csigma) \geq \kappa \qquad\text{and} \qquad\varepsilon \, s \geq \kappa
		\end{equation}
		with strictly positive constants
		\begin{equation*}
			r(\Csigma) \defeq (1+\Csigma N_1^\prime + N_2) \big(1 + \tfrac{1+\Lambda_M}{2\Lambda_M}(1+\Csigma N_1^\prime + N_2)\bigr)
			\quad\text{and}\quad 
			s\defeq\tfrac{1}{2} \tfrac{\Lambda_M}{1+\Lambda_M}.
		\end{equation*}
        
		Define $a_1, a_2, a_3 \in (0,\infty)$ via
		$r(\Csigma)+s \eqdef a_1 + a_2\Csigma + a_3 \Csigma^2.$ Then
		\begin{equation*}
			\bar{\varepsilon}(\Csigma) \defeq \frac{\Lambda_m^\prime \Csigma}{r(\Csigma)+s} = \frac{\Lambda_m^\prime \Csigma}{a_1 + a_2 \Csigma + a_3 \Csigma^2}
		\end{equation*}
		is bounded as a function of $\Csigma \in (0,\infty)$, so $\bar{\varepsilon}_\text{max} \defeq \max(1,\sup_{\alpha>0} \bar{\varepsilon}(\alpha))$ is finite.
        The ellipticity constant $\Cell$ is bounded by the diagonal entries of $\Sigma$. In particular, $\Cell \leq \Csigma$.
		  Hence for $C\in(1,\infty)$, we can set
		\begin{equation*}
			\varepsilon \defeq \frac{C-1}{C} \frac{\bar{\varepsilon}(\Csigma)}{\bar{\varepsilon}_\text{max}} \frac{\Cell}{\Csigma} \in (0,1), 
            \quad\text{then}\quad 
            \varepsilon(r(\Csigma)+s) = \frac{C-1}{C} \frac{1}{\bar{\varepsilon}_\text{max}} \Lambda_m^\prime \Cell \leq \Lambda_m^\prime \Cell.
		\end{equation*}
        Therefore $\kappa \defeq \varepsilon s \leq \Lambda_m^\prime \Cell - \varepsilon r(\Csigma)$ satisfies \eqref{eq:conv_crit_simple}, and Theorem \ref{thm:hypocoercivity:abstract} yields for all $t\geq 0$ and $g\in H$ that
		\begin{equation}
            \label{eq:exponential_conv_kappa}
			\Vert T_t g - \SPH{g}{1} \Vert \leq \kappa_1 e^{-\kappa_2t} \Vert g - \SPH{g}{1} \Vert,
		\end{equation} where $\kappa_1 = (\frac{1+\varepsilon}{1-\varepsilon})^{1/2}$ and $\kappa_2 = \frac{\kappa}{1+\varepsilon}$.
		Since $\varepsilon \leq \frac{C -1}{C}$ and $(C - 1)^2 \geq 0$, we observe that
		\begin{equation*}
			\kappa_1^2 = \frac{1+\varepsilon}{1-\varepsilon} \leq\frac{{(2C - 1)}/{C}}{{1}/{C}} = 2C - 1 \leq C^2\quad \text{and}\quad \kappa_2 = \frac{\kappa}{1+\varepsilon} > \frac{\kappa}{2}\eqdef \lambda.
		\end{equation*}
		Consequently, \eqref{eq:exponential_conv_kappa} remains valid if $\kappa_1$ and $\kappa_2$ are replaced by $C$ and $\lambda$ respectively.
		This concludes the proof as for $n_i \defeq {2 \bar{\varepsilon}_\text{max}}({s \Lambda_m^\prime)}^{-1} a_i$ the rate $\lambda$ assumes the form
		\begin{equation*}
			\lambda 
			= \frac{s\varepsilon}{2} = \frac{s}{2} \frac{C-1}{C \bar{\varepsilon}_\text{max}} \frac{\Lambda_m^\prime \Cell}{a_1 + a_2 \Csigma + a_3 \Csigma^2} = \frac{C-1}{C} \frac{\Cell}{n_1 + n_2 \Csigma + n_3 \Csigma^2}. \qedhere
		\end{equation*}
	\end{proof}
	
	\section{Sufficient Criteria for Essential \texorpdfstring{$m$}{m}-Dissipativity of \texorpdfstring{$\sL$}{L} and \texorpdfstring{$T$}{T}}
	\label{sec:crit_ess-m-diss}
	Next, we present some explicit sufficient criteria for the abstract conditions (eT) and (eL).
	\begin{lemma}
	    \label{lem:essT:suff}
	    Let $d_1 \leq d_2$ and $Q\in \mathbb{R}^{d_1\times d_2}$ be of full rank. Assume \textup{($\Phi_1$1)}. Then 
        $\textup{(eT)}$ holds
        if \textup{($\Phi_1$5)} or the following condition is satisfied:
    	\begin{itemize}[leftmargin = 1.1cm]
    		\item[\textup{(eT)$'$}] It holds $\vert\nabla\phi_1\vert \in L^2(\mu_1)$, $\vert\nabla\phi_1\vert^2 +\vert\nabla^2\phi_1\vert \in L_\textup{loc}^2(\mathbb{R}^{d_1},\mu_1)$, $\vert \nabla\phi_1\vert^2 e^{-\phi_1}\in L_\textup{loc}^\infty(\mathbb{R}^{d_1},\mu_1)$ and $d_1\geq 2$.
    	\end{itemize}
	\end{lemma}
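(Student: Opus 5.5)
We reduce (eT) to essential self-adjointness of a weighted Laplacian-type operator after a linear change of coordinates. Since $Q$ has full rank and $d_1\le d_2$, the matrix $M\defeq QQ^*$ (more generally $Q\tau^*\tau Q^*$) is symmetric positive definite. Set $x=M^{1/2}z$ and $\tilde\phi_1(z)\defeq\phi_1(M^{1/2}z)$. Then $f\mapsto f\circ M^{1/2}$ maps $C_c^2(\Omega_1)$ bijectively onto $C_c^2(\tilde\Omega_1)$, and after normalization it is a unitary map $L^2(\mu_1)\to L^2(\tilde\mu_1)$. Under this map $T$ becomes $\tilde T=\Delta-\nabla\tilde\phi_1\cdot\nabla$ on $C_c^2(\tilde\Omega_1)$. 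The hypotheses ($\Phi_1$1), ($\Phi_1$5) and (eT)$'$ are invariant under this transformation up to constants, since $\nabla\tilde\phi_1=M^{1/2}\nabla\phi_1\circ M^{1/2}$ and similarly for the Hessian. It therefore suffices to treat $Q Q^*=I$.

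By Remark \ref{rem:ess-m-dissipativity}, and because $T$ is symmetric and dissipative by \eqref{eq:T}, we must show that $(I-T)(C_c^2(\Omega_1))$ is dense. So let $h\in L^2(\mu_1)$ satisfy $\SP{(I-T)f}{h}{L^2(\mu_1)}=0$ for all $f\in C_c^2(\Omega_1)$. Set $u\defeq he^{-\phi_1}$. Then $u$ is a distributional solution on $\Omega_1$ of $u-\Delta u-\divergence(u\nabla\phi_1)=0$, since the formal adjoint with respect to Lebesgue measure of $\Delta-\nabla\phi_1\cdot\nabla$ is $\Delta+\divergence(\cdot\nabla\phi_1)$.

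\emph{Case ($\Phi_1$5).} Here $\Omega_1=\mathbb{R}^{d_1}$ and $\phi_1\in C^2$. Elliptic regularity applies, since the coefficients are locally Lipschitz: we get $h\in W^{1,2}_{\mathrm{loc}}$, and bootstrapping gives $h\in W^{2,2}_{\mathrm{loc}}$ with $h-Th=0$ pointwise a.e. Take smooth cutoffs $\eta_n(x)=\eta(x/n)$ with $|\nabla\eta_n|\le C/n$. Testing the equation with $\eta_n^2h$ (justified by the local regularity) and integrating by parts against $\mu_1$ gives
\[\int\eta_n^2h^2\,\dmui{1}+\int\eta_n^2|\nabla h|^2\,\dmui{1}=-2\int\eta_n h\nabla\eta_n\cdot\nabla h\,\dmui{1}.\]
By Young's inequality the right-hand side is at most $\tfrac12\int\eta_n^2|\nabla h|^2\,\dmui{1}+2\int h^2|\nabla\eta_n|^2\,\dmui{1}$. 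Letting $n\to\infty$ yields $\Vert h\Vert^2\le 0$, hence $h=0$. This is the standard Wielens/Liskevich-type argument, and it needs no integrability of $\nabla\phi_1$.

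\emph{Case (eT)$'$.} This case is the main obstacle, because the cutoffs must now also approach $\partial\Omega_1$, where $\nabla\phi_1$ blows up. Local regularity on $\Omega_1$ goes as before. For the cutoff I would use $\eta_n=\chi_n(\phi_1)$ from Lemma \ref{lem:cutoffs}, possibly multiplied by a spatial cutoff, since $\Omega_1$ may be unbounded. The same computation leaves the error term $2\int h^2\chi_n'(\phi_1)^2|\nabla\phi_1|^2\,\dmui{1}$. This is bounded by $C(\chi)^2\int_{\{n\le\phi_1\le n+2\}}h^2|\nabla\phi_1|^2\,\dmui{1}$, which does not obviously vanish because $h$ is only in $L^2$.

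Here the hypotheses $|\nabla\phi_1|^2e^{-\phi_1}\in L^\infty_{\mathrm{loc}}$ and $d_1\ge2$ must enter. Written in Lebesgue measure, the error term is $\int_{\{n\le\phi_1\le n+2\}}h^2|\nabla\phi_1|^2e^{-\phi_1}\,\dx$, whose weight is locally bounded. So it suffices to show $h^2\in L^1_{\mathrm{loc}}(\dx)$ near $\partial\Omega_1$ with vanishing mass on the shrinking shells, and dominated convergence then finishes the argument. The local bound is where I would use the $L^2_{\mathrm{loc}}$ conditions on $|\nabla\phi_1|^2+|\nabla^2\phi_1|$ and $d_1\ge2$ to establish $h\in W^{1,2}_{\mathrm{loc}}(\mathbb{R}^{d_1})$ across the boundary, following the approach for singular Langevin potentials (capacity of the singular set). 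Alternatively, I would try a Dirichlet-form argument: show that $\mu_1$-symmetric Markov uniqueness holds because $W^{1,2}_0$ equals $W^{1,2}$ in the weighted space. If that fails, I would fall back on the $h$-transform by $e^{-\phi_1/2}$, which turns $T$ into the Schrödinger operator $\Delta-V$ with $V=\tfrac14|\nabla\phi_1|^2-\tfrac12\Delta\phi_1$, and then cite essential self-adjointness for potentials $V\in L^2_{\mathrm{loc}}$ bounded below on $\mathbb{R}^{d_1}$. That last lower bound is the delicate point to check.
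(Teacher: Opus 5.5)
You handle ($\Phi_1$5) correctly, with the same coordinate change as the paper. The $W^{2,2}_{\mathrm{loc}}$ regularity of the annihilator $h$, followed by the cutoff $\eta(\cdot/n)$, is a self-contained version of the Wielens result the paper cites.

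The case \textup{(eT)$'$}, however, is not proved. You correctly isolate the obstruction: the term $2\int h^2\chi_n'(\phi_1)^2\vert\nabla\phi_1\vert^2\,\dmui{1}$, where $h$ is only in $L^2(\mu_1)$. None of your three continuations removes it.
\begin{itemize}
\item The claim that $h\in L^2_{\mathrm{loc}}$ or $W^{1,2}_{\mathrm{loc}}$ with respect to Lebesgue measure across $\partial\Omega_1$ comes with no mechanism. It is essentially as strong as the lemma itself: $h\in L^2(\mu_1)$ gives no control where $e^{-\phi_1}\to 0$, and orthogonality to $(I-T)(C_c^2(\Omega_1))$ carries no information across $\partial\Omega_1$.
\item Markov uniqueness is strictly weaker than essential self-adjointness, so it cannot yield \textup{(eT)}.
\item The ground-state transform gives $\Delta-V$ on $C_c^2(\Omega_1)\subset L^2(\Omega_1,\dx)$. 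Here $V$ is singular at $\partial\Omega_1$ and not known to be bounded below under the lemma's hypotheses, since ($\Phi_1$4) is not assumed. The Kato-type criterion you have in mind concerns $\ccinfty(\mathbb{R}^{d_1})$ with $V\in L^2_{\mathrm{loc}}(\mathbb{R}^{d_1})$. On the open set $\Omega_1$ you would need a growth condition on $V$ near the boundary, which the hypotheses do not supply.
\end{itemize}
Tellingly, $d_1\geq 2$ and $\vert\nabla\phi_1\vert^2e^{-\phi_1}\in L^\infty_{\mathrm{loc}}$ never actually enter your argument.

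The missing idea is to put the cutoff on the test functions rather than on $h$. For $f\in\ccinfty(\mathbb{R}^{d_1})$, set $f_n\defeq f\chi_n(\phi_1)\in C_c^2(\Omega_1)$. The difference $Tf_n-\chi_n(\phi_1)(\Delta f-\nabla\phi_1\cdot\nabla f)$ consists of terms involving $\chi_n'(\phi_1)$ and $\chi_n''(\phi_1)$. These are supported in $\supp f\cap\{\phi_1\geq n\}$ and bounded by a constant times $(\vert f\vert+\vert\nabla f\vert)(1+\vert\nabla\phi_1\vert^2+\vert\nabla^2\phi_1\vert)$.

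Because these terms involve the fixed function $f$ and not the unknown $h$, dominated convergence applies. Using $\vert\nabla\phi_1\vert^2+\vert\nabla^2\phi_1\vert\in L^2_{\mathrm{loc}}(\mathbb{R}^{d_1},\mu_1)$, you get $Tf_n\to\Delta f-\nabla\phi_1\cdot\nabla f$ in $L^2(\mu_1)$. Hence $\ccinfty(\mathbb{R}^{d_1})$ lies in the domain of the closure of $(T,C_c^2(\Omega_1))$. Since $\ccinfty(\Omega_1)$ is already a core, the closures of $(T,C_c^2(\Omega_1))$ and $(\Delta-\nabla\phi_1\cdot\nabla,\ccinfty(\mathbb{R}^{d_1}))$ coincide.

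In your language, $h$ is then automatically orthogonal to $(I-T)(\ccinfty(\mathbb{R}^{d_1}))$, which is exactly the information across $\partial\Omega_1$ you were missing. Essential self-adjointness on $\ccinfty(\mathbb{R}^{d_1})$ with the degenerate weight $e^{-\phi_1}$ is then a known result (Eberle, Theorem 2.5). That is where $d_1\geq 2$, $\vert\nabla\phi_1\vert\in L^2(\mu_1)$ and $\vert\nabla\phi_1\vert^2e^{-\phi_1}\in L^\infty_{\mathrm{loc}}$ enter.
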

	
	\begin{proof}
		As detailed in Appendix \ref{app:details_coordinate_trafos}, after a suitable coordinate transformation we may assume that $(T,C_c^2(\Omega))$ has the simple form
		\begin{equation*}
			\label{eq:T:simple}
			T = \Delta - \nabla \phi_1 \cdot \nabla.
		\end{equation*}
		
		As a densely defined, dissipative operator $(T,C_c^2(\Omega_1))$ has a dissipative closure $(T,D(T))$. We next show that 
		\begin{equation}
			\label{eq:T:ext}
			(\Delta - \nabla\phi_1 \cdot \nabla,\ccinfty(\mathbb{R}^{d_1}))\subset (T,D(T)).
		\end{equation} 
		If \textup{($\Phi_1$5)} holds, this is trivial since $\mathbb{R}^{d_1}=\Omega_1$. If \textup{(eT)$'$} holds instead, define $f_n\defeq f \chi_n(\phi_1)\in C_c^2(\Omega_1)$ for $f\in \ccinfty(\mathbb{R}^{d_1})$, and observe that
		\begin{multline*}
			T f_n = (\Delta f - \nabla \phi_1 \cdot \nabla f) \chi_n(\phi_1) + (\nabla f \cdot \nabla\phi_1) \chi_n'(\phi_1) \\
			+ f( \vert\nabla\phi_1\vert^2 \chi_n''(\phi_1) - \vert\nabla\phi_1\vert^2 \chi_n'(\phi_1) + \Delta \phi_1 \chi_n'(\phi_1))
			\to \Delta f - \nabla \phi_1 \cdot \nabla f 
		\end{multline*}
		in $L^2(\mu_1)$ by dominated convergence using that $\vert\nabla\phi_1\vert^2, \vert\nabla^2\phi_1\vert \in L_\textup{loc}^2(\mathbb{R}^{d_1},\mu_1)$.
		Since $(T,D(T))$ is closed, \eqref{eq:T:ext} follows.
		
		Now, the proof of Theorem \ref{thm:apriori_estimates}, primarily equation \eqref{eq:T:altcore}, shows that the subset $\ccinfty(\Omega_1)$ of $\ccinfty(\mathbb{R}^{d_1})$ is a core of $(T,C_c^2(\Omega_1))$. Together with \eqref{eq:T:ext}, this implies that $(T,D(T))$ is the closure of 
		\begin{equation*}
			\label{eq:T:eberle}
			(\Delta - \nabla\phi_1 \cdot \nabla,\ccinfty(\mathbb{R}^{d_1})).
		\end{equation*}
        
        We may therefore apply standard criteria for this operator: \cite[Theorem 3.1]{Wielens_EssSelfAdjointSchroedingerOperators} under assumption \textup{($\Phi_1$5)}, and \cite[Theorem 2.5]{Eberle_EssSelfAdjointSchroedingerOperators} under assumption \textup{(eT)$'$}.
		By the equivalences recalled in Remark~\ref{rem:ess-m-dissipativity}, both statements yield that $(T,C_c^2(\Omega_1))$ is essentially $m$-dissipative.
	\end{proof}
	
	\begin{lemma}
		\label{lem:essL:suff}
		Let $d\defeq d_1 = d_2 \in \mathbb{N}$ with $d\geq 2$, let $Q \in \mathbb{R}^{d\times d}$ be invertible, and let $\Sigma \in \mathbb{R}^{d\times d}$ be symmetric and positive definite. Assume \textup{($\Phi_1$1)} and \textup{($\Phi_1$5)}, as well as \textup{($\Phi_2$1)}, \textup{($\Phi_2$3)}, \textup{($\Phi_2$6)}, and \textup{($\Phi_2$7)}. Then \textup{(eL)} is satisfied.
	\end{lemma}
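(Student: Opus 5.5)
We plan to reduce (eL) to known essential $m$-dissipativity results for Langevin-type operators with constant diffusion, in the spirit of \cite{P2_Grothaus_Stilgenbauer_LangevinRevisited} and \cite{Bertram_Grothaus_HypocoercivityLangevinMultipicativeNoise}. The singularity of $\phi_2$ is handled by first passing from the core $C_c^2(\Omega_1\times\Omega_2)$ to the larger core $\ccinfty(\mathbb{R}^{d}\times\mathbb{R}^{d})$. This mirrors the treatment of $T$ in Lemma~\ref{lem:essT:suff}.

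\textbf{Step 1: coordinate change.} Since $Q$ is invertible, we apply the linear change of variables $x\mapsto Q^{-1}x$ (absorbing $Q$ into $\phi_1$) together with a linear change in $y$ that turns $\Sigma$ into the identity. This reduces to $Q=I$, $\Sigma=I$ with transformed potentials. The transformed potentials still satisfy ($\Phi_1$1), ($\Phi_1$5), ($\Phi_2$1), ($\Phi_2$3), ($\Phi_2$6), ($\Phi_2$7), up to constants. The details are analogous to Appendix~\ref{app:details_coordinate_trafos}. The operator then reads
\[
\sL=\Delta_y-\nabla\phi_2\cdot\grady+\nabla\phi_2\cdot\gradx-\nabla\phi_1\cdot\grady .
\]

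\textbf{Step 2: enlarging the core.} Since $\Omega_1=\mathbb{R}^d$, only the $y$-singularity matters. For $f\in\ccinfty(\mathbb{R}^{2d})$ we set $f_n=f\chi_n(\phi_2)\in\core$. Expanding $\sL f_n$ as in Lemma~\ref{lem:essT:suff} produces error terms of the form $\chi_n'(\phi_2)|\nabla\phi_2|^2f$, $\chi_n''(\phi_2)|\nabla\phi_2|^2 f$, $\chi_n'(\phi_2)\Delta\phi_2 f$ and $\chi_n'(\phi_2)\nabla\phi_2\cdot\grady f$. The term $\chi_n'(\phi_2)\,f\,\nabla\phi_2\cdot\gradx\phi_2$ vanishes because $\phi_2$ does not depend on $x$. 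All these terms tend to $0$ in $H$ by dominated convergence, using $|\nabla\phi_2|\in L^4(\mu_2)$ and $|\nabla^2\phi_2|\in L^2(\mu_2)$ from ($\Phi_2$3). Hence the closure of $(\sL,\core)$ extends $(\sL,\ccinfty(\mathbb{R}^{2d}))$. Conversely, $\core$ contains the functions in $\ccinfty(\Omega)$, so both operators have the same closure and it suffices to prove essential $m$-dissipativity on $\ccinfty(\mathbb{R}^{2d})$.

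\textbf{Step 3: the main obstacle.} We must show that $(I-\sL)(\ccinfty(\mathbb{R}^{2d}))$ is dense in $H$. The plan follows the standard scheme, with the roles of $x$ and $y$ interchanged relative to the Langevin case.
\begin{enumerate}
\item[(a)] Prove essential $m$-dissipativity for the version with $\nabla\phi_1$ replaced by zero, or by a Lipschitz cutoff of it. This uses a unitary ground-state transform $f\mapsto e^{-\phi_2/2}f$ to $L^2(\dx\,\dy)$. After the transform the $y$-part becomes a Schrödinger operator $\Delta_y-V$ with $V=\tfrac14|\nabla\phi_2|^2-\tfrac12\Delta\phi_2$, and the $x$-transport term is $\nabla\phi_2\cdot\gradx$.
\item[(b)] Combine (a) with partial Fourier transform in $x$ and a Kato/Eberle-type argument for the Schrödinger part. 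This is where $d\ge2$ and ($\Phi_2$7), namely $|\nabla\phi_2|^2e^{-\phi_2}\in L^\infty$, enter, exactly as in \cite[Theorem 2.5]{Eberle_EssSelfAdjointSchroedingerOperators}. Condition ($\Phi_2$6) with $\alpha<2$ gives $\Delta\phi_2\le \varepsilon|\nabla\phi_2|^2+C$, so $V$ is bounded below and the symmetric part is essentially self-adjoint on $\ccinfty$.
\item[(c)] Add back the bounded-on-compacts perturbation $-\nabla\phi_1\cdot\grady$. This is done by perturbation, using that it is antisymmetric and relatively bounded with respect to $\grady$, which is controlled by the dissipation $\|\grady f\|^2=-(\sL f,f)$. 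A localization in $x$ via cutoffs $\varphi_k(x)$ then removes the local boundedness restriction on $\nabla\phi_1$ (which is only continuous on all of $\mathbb{R}^d$).
\end{enumerate}
I expect sub-step (b) to be the hardest. The transport term $\nabla\phi_2\cdot\gradx$ is singular near $\partial\Omega_2$, and we must check that the regularity estimates behind Eberle's criterion survive with the extra unbounded drift $\nabla\phi_2$ multiplying $\gradx$. The first attempt would treat it, after Fourier transform in $x$, as the purely imaginary potential $i\,\nabla\phi_2\cdot k$, which does not affect the real-part coercivity estimates.
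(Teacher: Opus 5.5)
Steps 1 and 2 are sound (Step 2 is correct but not needed). Step 3 carries the whole content of the lemma, and there you give a programme rather than a proof: its decisive step (b) is left open, and as sketched it would not go through. The paper does not prove essential $m$-dissipativity of the degenerate non-symmetric operator by hand. After the same coordinate change, it does three things:
\begin{enumerate}
\item[(i)] it uses \cite[Theorem 2.5]{Eberle_EssSelfAdjointSchroedingerOperators} only to obtain essential self-adjointness of the symmetric part $(\Delta-\nabla\phi_2\cdot\nabla,\ccinfty(\Omega_2))$ on $L^2(\mu_2)$;
\item[(ii)] it notes that $\phi_1\in C^1(\mathbb{R}^d)$ is locally Lipschitz;
\item[(iii)] it applies \cite[Theorem 2.9]{Nonnenmacher_Grothaus_EssSelfAdjointGenerator}, which gives essential $m$-dissipativity of $\Delta_y-\nabla\phi_2\cdot\grady+\nabla\phi_2\cdot\gradx-\nabla\phi_1\cdot\grady$ from these two facts together with ($\Phi_2$1) and ($\Phi_2$6).
\end{enumerate}
Your (a)--(c) amount to reproving that theorem, and the key estimate is missing.

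In (b), the fact that $ik\cdot\nabla\phi_2$ is purely imaginary only gives dissipativity of the fibre operators $S_2+ik\cdot\nabla\phi_2$, where $S_2\defeq\Delta-\nabla\phi_2\cdot\nabla$. It gives no density of the range on the core, and an unbounded skew multiplication operator can destroy essential $m$-dissipativity. What is needed is relative boundedness of $|\nabla\phi_2|$ with respect to $S_2$. This is the real role of ($\Phi_2$6), not merely $V\geq -C$; lower-boundedness of $V$ alone does not give essential self-adjointness on $\ccinfty(\Omega_2)$ when $\Omega_2\neq\mathbb{R}^d$, which is why Eberle's criterion is needed. Concretely, ($\Phi_2$6) gives $|\Delta\phi_2|\le\theta|\nabla\phi_2|^2+K_\theta$. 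The argument of \eqref{eq:apriori:2} with $\phi_2$ in place of $\phi_1$, combined with $\|\nabla g\|^2=-(S_2g,g)\le\|S_2g\|\,\|g\|$, then yields $\||\nabla\phi_2|\,g\|^2\le\varepsilon\|S_2g\|^2+C_\varepsilon\|g\|^2$. The same bound is needed for the commutator $\nabla\phi_2(y)\cdot\nabla\varphi_k(x)$ in your $x$-localization, which is singular at $\partial\Omega_2$. The gluing of the fibres after the Fourier transform is not addressed at all.

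You also need to fix the Hilbert-space bookkeeping in (a). The map $f\mapsto e^{-\phi_2/2}f$ lands in $L^2(\mu_1\otimes\dy)$, not $L^2(\dx\,\dy)$. Moreover, in $L^2(\mu)$ the operator $L_0$ obtained by dropping $-\nabla\phi_1\cdot\grady$ is not dissipative, since $(L_0f,f)_H=-\|\grady f\|_H^2+\tfrac12\int f^2\,\nabla\phi_1\cdot\nabla\phi_2\,\dmu$. You must flatten both variables with $e^{-(\phi_1+\phi_2)/2}$. Then the terms $\pm\tfrac12\nabla\phi_1\cdot\nabla\phi_2$ cancel, and the perturbation is indeed the antisymmetric $-\nabla\phi_1\cdot\grady$ you have in mind.

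The simplest repair is to replace Step 3 by the citation of \cite[Theorem 2.9]{Nonnenmacher_Grothaus_EssSelfAdjointGenerator}.
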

	
	\begin{proof}
        As detailed in Appendix \ref{app:details_coordinate_trafos}, after a suitable coordinate transformation we may assume that $(\sL, \core)$ has the simple form
		\begin{equation*}
			\label{eq:L:simple}
			\sL = \Delta_y - \nabla \phi_2 \cdot \grady + \nabla \phi_2 \cdot \gradx - \nabla \phi_1 \cdot \grady.
		\end{equation*}
		Due to \textup{($\Phi_2$1)}, \textup{($\Phi_2$3)} and \textup{($\Phi_2$7)}, \cite[Theorem 2.5]{Eberle_EssSelfAdjointSchroedingerOperators} yields that $(\Delta - \nabla\phi_2 \cdot \nabla,\ccinfty(\Omega_2))$ is essentially self-adjoint on $L^2(\mu_2)$. By \textup{($\Phi_1$1)} and \textup{($\Phi_1$5)}, $\phi_1$ is continuously differentiable and thus locally Lipschitz on $\Omega_1=\mathbb{R}^{d_1}$. With these two observations, \textup{($\Phi_2$1)}, and \textup{($\Phi_2$6)}, we can apply \cite[Theorem 2.9]{Nonnenmacher_Grothaus_EssSelfAdjointGenerator} to obtain that $(\sL, \core)$ is essentially $m$-dissipative.
	\end{proof}

    \begin{remark}[The one-dimensional case]
        \label{rem:one dimensional case}
        In dimension $d_i=1$ explicit criteria for essential self-adjointness of $(\Delta - \nabla\phi_i \cdot \nabla,\ccinfty(\mathbb{R}^{d_i}))$ can be found in \cite[Section~2]{Wielens_EssSelfAdjointSchroedingerOperators}. However, the resulting conditions depend on the boundary behavior of $\phi_i$ and are thus less transparent than the integrability assumptions required by \cite[Theorem 2.5]{Eberle_EssSelfAdjointSchroedingerOperators}. Since our list of assumptions in Corollary \ref{thm:main:explicit} is already rather involved, we do not pursue the one-dimensional case here.
        Nevertheless, we emphasize that a sufficient condition for (eT) that is explicit in $\phi_1$ and allows singularities can be obtained for $d_1=1$ in this way. Similarly, under such explicit assumptions on $\phi_2$ ensuring self-adjointness of $(\Delta - \nabla\phi_2 \cdot \nabla,\ccinfty(\mathbb{R}^{d_2}))$ for $d_2=1$, the proof of Lemma \ref{lem:essL:suff} yields an explicit criterion for (eL) for $d=1$.
    \end{remark}
    Using the above criteria, we obtain the explicit version of our main theorem.
    \begin{proof}[Proof of Corollary \ref{thm:main:explicit}]
		Since $\Sigma$ is a constant, symmetric, and positive definite matrix, ($\Sigma$1) and ($\Sigma$2) are clearly satisfied for   $\Cell = \lambda_\textup{min}(\Sigma)$ and $\Csigma = \vert \Sigma \vert$.
		Due to Lemma \ref{lem:essT:suff} and \ref{lem:essL:suff}, assumptions (eT) and (eL) hold. Thus \textup{($\Phi_1$1)} - \textup{($\Phi_1$4)}, \textup{($\Phi_2$1)} - \textup{($\Phi_2$5)}, \textup{($\Sigma$1)}, \textup{($\Sigma$2)}, \textup{(eL)} and \textup{(eT)} are all satisfied, and the claim follows from Theorem \ref{thm:main:abstract}.
	\end{proof}

    \section{Stochastic Interpretation}
	\label{sec:stochastic_interpretation}
 
    In this section, we identify the semigroup $(T_t)_{t\geq 0}$ in  Theorem \ref{thm:main:abstract} with the transition semigroup of the diffusion process determined by the unique weak solutions of SDE  \eqref{eq:sde:general} from Theorem 
    \ref{thm:existence_uniqueness_weak_sol}, as stated in Theorem \ref{thm:identification}.
    To this end, we establish the existence of a conservative diffusion process associated with $(\cL ,D(\cL))$ and show that this process yields martingale and weak solutions to the SDE \eqref{eq:sde:general}. 
    To keep the presentation focused, some details are deferred to Appendix \ref{app:details_stochastic_representation}.
   
	\begin{standingassumptions}
	    Throughout this section, let $\Sigma\colon \mathbb{R}^{d_2} \to \mathbb{R}^{d_2\times d_2}$ be pointwise symmetric with ($\Sigma1$) and ($\Sigma2$). Let $\phi_1$ and $\phi_2$ be $\R\cup \{+\infty\}$-valued potentials on $\mathbb{R}^{d_i}$ satisfying ($\Phi_i$1) and ($\Phi_i$3), respectively. Additionally, assume (eL).
	\end{standingassumptions}
    
	Under these assumptions, which are not intended to be minimal, the definitions and results from Section \ref{subsec:data_conditions} apply. In particular, all data conditions are satisfied.
	
	\subsection{Construction of the Associated Process}

   An associated process is obtained from the theory of generalized Dirichlet forms \cite{Stannat_GeneralizedDirichletForms}. Due to (D7), it is conservative, so we can view it as a process on the subspace $C([0,\infty); \Omega)$ of $\sZ$. We denote the induced $\sigma$-field on $C([0,\infty);\Omega)$ by $\mathcal{B}$ 
    and the raw natural filtration by $(\mathcal{G}_t)_{t\geq 0}$.
    
    The notions of nests, exceptional sets, and properties holding quasi-everywhere are in the following understood with respect to the sub-Markovian strongly continuous contraction resolvent $(G_\alpha)_{\alpha > 0}$ defined by $(\cL,D(\cL))$, see Appendix \ref{app:details_stochastic_representation} for details.
	
	\begin{theorem}
		\label{thm:associated_process}
		There exists a conservative diffusion process $\mathbf{M}=(C([0,\infty); \Omega), \mathcal{B}, (\mathcal{G}_t)_{t\geq 0},$
        $(X_t, Y_t)_{t\geq 0}, (\mathbb{P}_{(x,y)})_{(x,y)\in \Omega})$ with state space $\Omega$ that is properly associated in the resolvent sense with $(\cL,D(\cL))$,  i.e., $R_\alpha f$ is a quasi-continuous $\mu$-version of $G_\alpha f$ for all $\alpha>0$ and all bounded $f \in L^2(\Omega,\mu)$. Here, $(R_\alpha)_{\alpha >0}$ denotes the resolvent of $\mathbf{M}$.
    \end{theorem}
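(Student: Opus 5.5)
We will obtain $\mathbf{M}$ from Stannat's existence theorem for processes associated with generalized Dirichlet forms \cite{Stannat_GeneralizedDirichletForms}. The first step is to put $(\cL,D(\cL))$ into that framework. Take $\mathcal{V}=H$ with the trivial symmetric part, so that $\Lambda=\cL$ and the generalized Dirichlet form is $\sE(u,v)=-\SPH{\cL u}{v}$ on $D(\cL)\times H$.

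Next we check that the associated resolvent $(G_\alpha)_{\alpha>0}$ is sub-Markovian. Since $\core=C_c^2(\Omega)$ is a core by (eL), it suffices to show $\SPH{\sL f}{(f-1)^+}\le 0$ for $f\in\core$, with a standard approximation to handle that $(f-1)^+$ is only Lipschitz. For the symmetric part, Lemma~\ref{lem:bilinearforms} gives $-\int \nabla_y f\cdot\Sigma\nabla_y (f-1)^+\,\dmu\le 0$. For the antisymmetric part, $\int Af\,(f-1)^+\,\dmu=\int A\bigl(\tfrac12((f-1)^+)^2\bigr)\,\dmu=0$, by the chain rule and the invariance $\int A g\,\dmu=0$, which follows from Lemma~\ref{lem:bilinearforms} with $g=1$. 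The same computation with $\sL 1=0$ (Lemma~\ref{lem:D5D7}) also gives conservativeness, $\alpha G_\alpha 1=1$. Since $\core$ consists of continuous functions and is an algebra, the co-form condition (D3) of Stannat holds as well. A convenient way to get it is to use $\chi_n(\phi_1)\chi_n(\phi_2)$-type functions, via Lemma~\ref{lem:cutoffs}, to produce the required $1$-excessive functions.

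The main obstacle is quasi-regularity: we need an $\sE$-nest of compact sets and a separating countable family of quasi-continuous functions. We would take the nest $F_k=\overline{\Omega_{1,k}}\times\overline{\Omega_{2,k}}$, which consists of compact subsets of $\Omega$ by ($\Phi_i$1). To show it is a nest, we use Stannat's criterion via capacities. We bound $\mathrm{Cap}(\Omega\setminus F_k)$ by $\sE$-norms of the functions $1-\chi_k(\phi_1)\chi_k(\phi_2)$, or rather of their $\alpha$-excessive regularizations $G_1$-dominating functions. Their $\sL$-images contain $\nabla\phi_i\,\chi'$ and $|\nabla\phi_2|^2\chi''$ terms, which tend to zero in $L^2(\mu)$ by dominated convergence using ($\Phi_1$3), ($\Phi_2$3) and ($\Sigma$2). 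This is where the integrability assumptions enter, and it is the step we expect to be most technical. Separation then follows from the countable family of $C_c^\infty(\Omega)$ functions, which lie in $D(\cL)$ and are continuous, hence quasi-continuous.

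Given quasi-regularity and sub-Markovianity, Stannat's theorem yields a $\mu$-tight special standard process properly associated in the resolvent sense, with state space $\Omega$ after removing an exceptional set. Continuity of paths follows from locality, i.e.\ $\SPH{\sL f}{g}=0$ when $f,g\in\core$ have disjoint supports, since $\sL$ is a differential operator. Strong locality then gives a diffusion through the standard argument of Trutnau/Stannat, which uses that $\mathbf{M}$ has no killing inside $\Omega$. Conservativeness, $\alpha G_\alpha 1=1$, gives infinite lifetime quasi-everywhere. Finally, we restrict to the complement of the exceptional set, which we take to be invariant, and reparametrize so that $\mathbf{M}$ lives on $C([0,\infty);\Omega)$ and $R_\alpha f$ is a quasi-continuous version of $G_\alpha f$.
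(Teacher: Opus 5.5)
Your plan has the same architecture as the paper's proof: Stannat's existence theorem in the semigroup case ($\mathcal V=H$), the same compact nest $F_k=\overline{\Omega_{1,k}}\times\overline{\Omega_{2,k}}$, separation by $C_c^\infty(\Omega)$, locality plus conservativity (via Trutnau and Stannat) for continuous paths and infinite lifetime, and a final modification on an exceptional set. The differences are in how two ingredients are checked, and there is no fatal gap.

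For sub-Markovianity, the paper cites Eberle's lemma on abstract diffusion operators with an invariant measure. Your direct check of $\SPH{\sL f}{(f-1)^+}\le 0$ on $\core$ is essentially the proof of that lemma and is fine. It uses a convex smoothing of $(\cdot-1)^+$, the chain rule for $A$, and $\SPH{Ag}{1}=0$, and extends to $D(\cL)$ by graph-norm approximation.

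For the nest, which you flag as the hard step, the paper uses Stannat's Remark III.2.11. By that remark, $(F_k)$ is a nest once $\{f\in D(\cL): f=0\ \mu\text{-a.e. off some }F_k\}$ is a core. This is immediate, because every $f\in\core$ has compact support in $\Omega$ and so vanishes off some $F_k$. No capacity estimate and no integrability assumption is needed for this step.

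Your capacity route also works, but it must use the graph norm rather than $\sE$: here $\sE(u,u)=-\SPH{Su}{u}$ controls only $\nabla_y u$. If $u\in D(\cL)$ and $u\ge 1$ a.e. on an open $U$, then $u\le G_1\bigl((1-\cL)u\bigr)^+$, which is $1$-excessive. Hence $\mathrm{Cap}_\varphi(U)\le\Vert(1-\cL)u\Vert_H\Vert\varphi\Vert_H$ for $0<\varphi\le 1$. Take $u_k=1-\chi_{k-1}(\phi_1)\chi_{k-1}(\phi_2)$; the index must be shifted so that $u_k=1$ on $\Omega\setminus F_k$, since with $\chi_k$ this fails just outside $F_k$. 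The dominated convergence behind Lemma~\ref{lem:D5D7} then gives $\Vert(1-\cL)u_k\Vert_H\to 0$. This recovers the density criterion by hand, at the cost of using ($\Phi_1$3), ($\Phi_2$3) and ($\Sigma$2).

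There are three smaller points.

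First, locality is needed for every continuous $f\in D(\cL)$, not only for $f\in\core$. Your disjoint-support identity on $\core$ does not pass to the limit directly, because approximants of $f$ need not have support disjoint from the test function. The paper instead approximates $f$ by $f_n\in\core$ and uses $\SPH{\sL f_n}{\varphi}=\SPH{f_n}{(S+A)\varphi}$ for $\varphi\in C_c^\infty(\Omega\setminus\supp f)$.

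Second, restricting to the complement of a properly exceptional set $N$ only gives a process on $\Omega\setminus N$. To have $\mathbb{P}_{(x,y)}$ for every $(x,y)\in\Omega$, as the statement claims, you also need the trivial extension that makes the points of $N$ traps, as in Ma--R\"ockner.

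Third, for the algebra condition the paper uses $\core+\mathbb{R}$, which is still a core because $1\in D(\cL)$. Your cutoff construction of $1$-excessive functions is not needed there.
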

    
	\begin{proof}
		In Appendix \ref{app:details_stochastic_representation} it is verified that $(T_t)_{t\geq 0}$ is a sub-Markovian strongly continuous contraction semigroup on $L^2(\Omega, \mu)$, that its generator $(\cL,D(\cL))$ has a core of continuous bounded functions forming an algebra, that there is a sequence of continuous functions in $D(L)$ separating the points of $\Omega$ and that a nest of compact sets exists. This yields the existence of a special standard process $\mathbf{M}$ properly associated in the resolvent sense with $(T_t)_{t\geq 0}$, see \cite[Theorem IV.2.2]{Stannat_GeneralizedDirichletForms}.
		
		The locality of the generator and the conservativity of $(T_t)_{t\geq 0}$ imply $\mathbb{P}_{(x,y)}$-a.s.{} continuous paths and $\mathbb{P}_{(x,y)}$-a.s.{} infinite lifetime for quasi-all $(x,y)\in \Omega$. This is obtained by \cite[Theorem 3.3]{Trutnau} in combination with \cite[Theorem IV.3.8(ii)]{Stannat_GeneralizedDirichletForms} for quasi-all $(x,y)\in \Omega$. By the procedure of restriction and trivial extension, see  \cite[Remark IV.6.2(i), Corollary IV.6.5 and Remark IV.3.23(i)]{MR92},
		 one can modify the process for an exceptional Borel set of initial points to obtain the statement for all $(x,y)\in \Omega$. It follows that $\mathbf{M}$ is even a conservative diffusion process.
	\end{proof}
	
	\subsection{Solution of the Martingale Problem}
    
    Next, we observe that $\mathbf{M}$ yields solutions to the martingale problem for $(\sL,\core)$ with certain absolutely continuous initial distributions. 
    This follows from \cite[Lemma 5.1]{Conrad_2010}.
    
    \begin{theorem}
		\label{thm:martingale_problem_path_space}
        Let $h \in L^2(\mu)$ be a probability density with respect to $\mu$ and set $\mathbb{P}_{h\mu}\defeq \int_{\Omega} \mathbb{P}_{(x,y)}(\cdot) \, h(x,y) \,\textup{d}\mu(x,y)$.
        Then $\mathbb{P}_{h\mu}$ solves the martingale problem for $(\cL,D(\cL))$, i.e., for all $f\in D(\cL)$, the process $(M_t^{[f]})_{t\geq 0}$ on $(C([0,\infty);\Omega), \mathcal{B}, \mathbb{P}_{h\mu})$ defined by
		\begin{equation}
			\label{eq:martingale:m}
			M_t^{[f]} \defeq f(X_t, Y_t) - f(X_0, Y_0) - \int_{0}^{t} \cL f(X_s, Y_s) \,\textup{d}s, \quad t \ge 0,
		\end{equation}
		is an $(\mathcal{G}_t)_{t\geq 0}$-martingale. 
        Furthermore, if $f\in D(\cL)$ with $f^2\in D(\cL)$ and $\cL f \in L^4(\mu)$, then also the process $(N_t^{[f]})_{t\geq 0}$ given by
		\begin{equation}
			\label{eq:martingale:n}
			N_t^{[f]} \defeq (M_t^{[f]})^2 - \int_{0}^{t} \bigl(\cL(f^2)-2f \cL f\bigr)(X_s, Y_s) \,\textup{d}s, \quad t \ge 0,
		\end{equation}
		is a martingale with respect to $(\mathcal{G}_t)_{t\geq 0}$, identifying the quadratic variation of $(M_t^{[f]})_{t\geq 0}$.
	\end{theorem}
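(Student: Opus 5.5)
The plan is to derive both martingale properties from the Markov property of $\mathbf{M}$ together with the identification of its transition semigroup with $(T_t)_{t\geq 0}$ on $L^2(\mu)$. This is the content of \cite[Lemma 5.1]{Conrad_2010}, which we would verify in our setting.

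\emph{Step 1: the semigroup identification.} From Theorem \ref{thm:associated_process}, $R_\alpha f$ is a $\mu$-version of $G_\alpha f$ for bounded $f$. Write $p_t$ for the transition kernels of $\mathbf{M}$. We would deduce that $p_t f$ is a $\mu$-version of $T_t f$ for all $t\geq 0$ and all $f\in L^2(\mu)$. Two ingredients give this: uniqueness of the Laplace transform, using right-continuity of $t\mapsto p_tf(x,y)$ for bounded continuous $f$ (by continuity of paths) and strong continuity of $T_t$; and density of bounded functions. Because $\mu$ is $(T_t)$-invariant (by (D6) and (D7), $\SPH{T_tf}{1}=\SPH{f}{1}$), we get $\mathbb{E}_{\mu}[\vert g\vert(X_t,Y_t)]=\int \vert g\vert\,\dmu$. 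It follows that expressions like $f(X_s,Y_s)$ and $\int_0^t \cL f(X_s,Y_s)\,\textup{d}s$ are independent of the choice of $\mu$-version. They are also integrable under $\mathbb{P}_{h\mu}$, since Cauchy--Schwarz gives $\mathbb{E}_{h\mu}[\vert g\vert(X_s,Y_s)]\leq \Vert h\Vert\,\Vert g\Vert$.

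\emph{Step 2: the martingale $M^{[f]}$.} Fix $0\leq s<t$ and a bounded $\mathcal{G}_s$-measurable $\Phi$. By the Markov property,
\[
\mathbb{E}_{h\mu}\bigl[(M^{[f]}_t-M^{[f]}_s)\Phi\bigr]=\mathbb{E}_{h\mu}\Bigl[\Phi\, \Bigl(p_{t-s}f-f-\int_0^{t-s}p_r\cL f\,\textup{d}r\Bigr)(X_s,Y_s)\Bigr].
\]
By Step 1, the bracket is a version of $T_{t-s}f-f-\int_0^{t-s}T_r\cL f\,\textup{d}r$, and this vanishes in $L^2(\mu)$ for $f\in D(\cL)$. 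The law of $(X_s,Y_s)$ under $\mathbb{P}_{h\mu}$ has density $T_s^*h\in L^2(\mu)$ with respect to $\mu$, so it does not charge $\mu$-null sets. Hence the expectation is $0$.

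\emph{Step 3: the martingale $N^{[f]}$.} Expand $(M_t^{[f]})^2-(M_s^{[f]})^2$ and condition on $\mathcal{G}_s$. Using Step 2 for $f$ and for $f^2$, the identity reduces, after the standard computation, to a Markov-property statement involving $\mathbb{E}[(\int_s^t \cL f\,\textup{d}r)^2]$ and cross terms $f(X_t,Y_t)\int_s^t\cL f\,\textup{d}r$. The cross terms are handled by writing $f(X_t,Y_t)=f(X_r,Y_r)+\int_r^t\cL f\,\textup{d}u+(M_t-M_r)$ and applying Fubini. This is where the integrability hypotheses $f^2\in D(\cL)$ and $\cL f\in L^4(\mu)$ enter. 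They make all products of $f$, $\cL f$, and the time integrals of $\cL f$ integrable under $\mathbb{P}_{h\mu}$, which we check via Hölder together with the density bound from Step 2.

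The main obstacle is Step 3's integrability bookkeeping: $f$ is only in $L^2$, so $f\,\cL f$ and $(M_t^{[f]})^2$ need not be integrable a priori. We would first prove the statement for bounded $f$ (or under $\mathbb{P}_\mu$, using stationarity), and then pass to the general case by truncation and $L^1$-convergence.
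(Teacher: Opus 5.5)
Your proposal is correct and follows the same route as the paper, which proves this statement simply by invoking \cite[Lemma 5.1]{Conrad_2010}. Your Steps 1--3 reconstruct the standard argument behind that lemma: identification of $p_t$ with $T_t$, invariance of $\mu$, the bound $\mathbb{E}_{h\mu}[\vert g\vert(X_s,Y_s)]\leq\Vert h\Vert\,\Vert g\Vert$, the Markov property, and the Fubini computation for the cross terms. The truncation detour in your last paragraph is unnecessary, and it would also be delicate, since truncations of $f$ need not remain in $D(\cL)$. The hypothesis $f^2\in D(\cL)\subset L^2(\mu)$ already gives $f\in L^4(\mu)$, so together with $\cL f\in L^4(\mu)$ all terms in $N^{[f]}$ are integrable under $\mathbb{P}_{h\mu}$ directly.
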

	
    \begin{remark}
        \label{rem:martingale_solutions:regularity}
        
        For $f\in\core=C_c^2(\Omega)$, the functions $\sL f$ and $\sL (f^2)$ are bounded and, in particular, lie in $L^4(\mu)$. We may therefore consider the continuous representative of $f$ and bounded representatives of $\cL f$ and $\cL (f^2)$ in the definition of $M^{[f]}$ and $N^{[f]}$ to obtain continuous versions of these martingales that satisfy \eqref{eq:martingale:m} and \eqref{eq:martingale:n} pointwise. In particular,  for such $f$, $M^{[f]}$ and $N^{[f]}$ are also martingales with respect to the minimal right-continuous filtration $(\mathcal{G}_{t+})_{t\geq 0}$ where $\mathcal{G}_{t+}= \cap_{s > t} \mathcal{G}_s$.
	\end{remark}

    \subsection{Identification as the Weak Solution to the SDE}

    In view of Theorem \ref{thm:martingale_problem_path_space}, we can follow the classical reconstruction based on L\'evy's characterization of Brownian motion to obtain weak solutions to SDE \eqref{eq:sde:general}. The details are deferred to Appendix \ref{app:details_stochastic_representation}.

	\begin{theorem}
		\label{thm:weak_solution_sde}
		Let $h \in L^2(\mu)$ be a probability density with respect to $\mu$ and $\sigma\colon \Omega_2 \to \mathbb{R}^{d_2 \times d_2}$ be continuous with $\Sigma = \sigma \sigma^*$.
        Then there exists a $d_2$-dimensional $(\mathcal{G}_{t+})_{t\geq 0}$-adapted Brownian motion $(B_t)_{t\geq 0}$ on
        $(C([0,\infty);\Omega), \mathcal{B}, \mathbb{P}_{h\mu})$ such that $\mathbb{P}_{h\mu}$-a.s. for all $t\geq 0$
        \begin{align}
            \label{eq:SDE:X}
			X_t - X_0 &= \int_{0}^{t} Q \nabla\phi_2 (Y_s) \,\textup{d}s, \\
            \label{eq:SDE:Y}
			Y_t - Y_0 &= \int_{0}^{t} (\divergence \Sigma - \Sigma \nabla\phi_2)(Y_s) - Q^*\nabla\phi_1 (X_s) \,\textup{d}s + \int_{0}^{t} \sqrt{2} \sigma(Y_s) \,\textup{d}B_s.
        \end{align}
        In particular, the diffusion process $(X, Y)$ under $\mathbb{P}_{h\mu}$ coincides with the unique weak solution to SDE \eqref{eq:sde:general} with initial distribution $h\,\textup{d}\mu$ obtained in Theorem \ref{T:2.4}. 
	\end{theorem}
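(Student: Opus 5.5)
The plan is to follow the classical reconstruction of a weak solution from a martingale problem solution, using Theorem \ref{thm:martingale_problem_path_space} together with Remark \ref{rem:martingale_solutions:regularity}. Coordinate functions are not in $\core$, so I localize. Fix exhausting relatively compact open sets $U_k\uparrow\Omega$, for instance $U_k=\Omega_{1,k}\times\Omega_{2,k}$, which are relatively compact by ($\Phi_i$1). Choose cutoffs $\eta_k\in C_c^2(\Omega)$ with $\eta_k=1$ on $U_k$, and define $\tau_k\defeq\inf\{t\geq 0:(X_t,Y_t)\notin U_k\}$. By conservativity (Theorem \ref{thm:associated_process}) and continuity of paths in $\Omega$, $\tau_k\uparrow\infty$ $\mathbb{P}_{h\mu}$-a.s.

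\textbf{Step 1 (semimartingale decomposition).} Apply the martingale statements to $f=x_i\eta_k$, $f=y_j\eta_k$ and the products $x_iy_j\eta_k$, $y_iy_j\eta_k$, all of which lie in $\core$. Stopping at $\tau_k$ and using $\eta_k=1$ on $U_k$, $\sL(x_i\eta_k)=(Q\nabla\phi_2)_i$ there. Hence $X_{t\wedge\tau_k}-X_0-\int_0^{t\wedge\tau_k}Q\nabla\phi_2(Y_s)\,\textup{d}s$ is a continuous martingale. Its quadratic variation is given by $N^{[f]}$, namely $\int\bigl(\sL(f^2)-2f\sL f\bigr)=2\int(\Sigma\grady f)\cdot\grady f=0$ because $f$ does not depend on $y$. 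A continuous martingale with zero quadratic variation vanishes, which yields \eqref{eq:SDE:X} up to $\tau_k$ and then for all $t$.

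Similarly, $M^Y_t\defeq Y_t-Y_0-\int_0^t\bigl((\divergence\Sigma-\Sigma\nabla\phi_2)(Y_s)-Q^*\nabla\phi_1(X_s)\bigr)\,\textup{d}s$ is a continuous local martingale with localizing sequence $\tau_k$. Polarizing $N^{[f]}$ for $f=(y_i+y_j)\eta_k$ gives $\langle M^{Y,i},M^{Y,j}\rangle_t=2\int_0^t\Sigma_{ij}(Y_s)\,\textup{d}s$, using $\sL(y_iy_j)-y_i\sL y_j-y_j\sL y_i=2\Sigma_{ij}$.

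\textbf{Step 2 (Brownian motion).} By ($\Sigma$1), $\Sigma$ is uniformly elliptic, so $\sigma(y)$ is invertible with $|\sigma^{-1}|\leq\Cell^{-1/2}$. Define $B_t\defeq\int_0^t 2^{-1/2}\sigma(Y_s)^{-1}\,\textup{d}M^Y_s$. Measurability of $\sigma^{-1}$ follows from the continuity of $\sigma$, and the integrand is bounded, so $B$ is a continuous local martingale with $\langle B\rangle_t=\int_0^t\sigma^{-1}\Sigma(\sigma^{-1})^*\,\textup{d}s=tI$. Lévy's characterization then shows that $B$ is a $(\mathcal{G}_{t+})$-Brownian motion. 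Finally, $\int_0^t\sqrt2\sigma(Y_s)\,\textup{d}B_s=M^Y_t$, which is \eqref{eq:SDE:Y}. No enlargement of the probability space is needed precisely because $\sigma$ is invertible.

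\textbf{Step 3 (identification and the main obstacle).} Since $(X_0,Y_0)\sim h\,\textup{d}\mu$ under $\mathbb{P}_{h\mu}$ and the solution never leaves $\Omega$, the pair $(X,Y)$ is a weak solution with infinite lifetime. The uniqueness part of Theorem \ref{T:2.4} then identifies its law with the unique weak solution, which applies when $\sigma$ meets the Section~\ref{sec:solution_sde} hypotheses, as in Theorem \ref{thm:identification}.

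The main technical obstacle is the version issue. Theorem \ref{thm:martingale_problem_path_space} yields martingales only with $\mathbb{P}_{h\mu}$-a.s. defined integrals. I will rely on Remark \ref{rem:martingale_solutions:regularity}: for $f\in\core$, $\sL f$ and $\sL(f^2)$ have bounded continuous representatives, so the identities hold pathwise for all $t$ simultaneously, and the stopped processes are genuine continuous $(\mathcal{G}_{t+})$-martingales. I also need the stopping times $\tau_k$ to be $(\mathcal{G}_{t+})$-stopping times, which holds because they are hitting times of closed sets by continuous paths.
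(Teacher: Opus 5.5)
Your proposal is correct and follows essentially the same route as the paper's proof. You localize the coordinate functions by cutoffs in $\core$ along stopping times exhausting $\Omega$. You then use $M^{[f]}$ and $N^{[f]}$ to get zero quadratic variation for the $x$-part and, by polarization, $2\int_0^t\Sigma_{jk}(Y_s)\,\textup{d}s$ for the $y$-part. Finally you define $B$ via $\sigma^{-1}$ and apply L\'evy's characterization.

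The differences are cosmetic. You use exit times from the open sets $\Omega_{1,k}\times\Omega_{2,k}$, which are even raw-filtration stopping times, and you make the final identification via the uniqueness in Theorem~\ref{T:2.4} explicit, which the paper leaves implicit. One small correction: $x_i\eta_k$ is independent of $y$ only on $U_k$, not globally, but that is all you need after stopping at $\tau_k$.
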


    As a consequence, we finally obtain the stochastic representation of $(T_t)_{t\geq 0}$ as the transition semigroup of the unique weak solution to SDE \eqref{eq:sde:general}.
    
    \begin{proof}[Proof of Theorem \ref{thm:identification}]
        By a combination of Theorems \ref{thm:associated_process} and \ref{thm:weak_solution_sde}, the unique weak solution to SDE \eqref{eq:sde:general} with initial distribution $h\,\textup{d}\mu$ is a conservative diffusion process properly associated with $(\sL,D(\cL))$ in the resolvent sense. By \cite[Lemma II.4.2]{MR92} it follows that $R_\alpha f$ is a $\mu$-version of $G_\alpha f$  even for all $f\in L^2(\Omega,\mu)$ and $\alpha >0$. This in turn is equivalent to $p_t f$ being a $\mu$-version of $T_tf$, for all $t>0$ and $f\in L^2(\Omega,\mu)$, see \cite[Exercise IV.2.7]{MR92}.
    \end{proof}

    \section*{Acknowledgments}
    Part of this article is based on the third-named author’s Master’s thesis \cite{Pfohl2024}.
    
    The research of Zhen-Qing Chen is partially supported by a Simons Foundation fund. The research of Onno Pfohl is funded
    in part by the Deutsche Forschungsgemeinschaft (DFG, German Research Foundation) under Germany's Excellence Strategy - The Berlin Mathematics Research Center MATH+ (EXC-2046/1, EXC-2046/2, project ID: 390685689).

	\printbibliography[title={References}]

    \appendix

    \section{Deferred Proofs from Sections \texorpdfstring{\ref{sec:intro}}{1}, \texorpdfstring{\ref{sec:hypocoercivity_abstract}}{3} and \texorpdfstring{\ref{sec:hypocoercivity_application}}{4}}
    \label{app:deferred_proofs}
    Before we present the proofs of Proposition \ref{prop:vonNeumann}, Lemma \ref{lem:D5D7}, and Lemma \ref{lem:phi2_integrals}, we restate and prove the incompatibility of the growth condition \eqref{eq:growth:old} with singular potentials, as claimed in the introduction.

    \begin{proposition}
        \label{prop:growth}
        Let $\phi_1\colon \mathbb{R}^{d_1} \to \mathbb{R}\cup\{\infty\}$ be continuous in the extended sense.  Assume that $\phi_1 \in C^2(\Omega_1)$, where $\Omega_1 = \{ \phi_1 < \infty\}$, and that there is $C\in (0,\infty)$ such that 
        \begin{equation}
            \label{eq:growth:old:app}
            \vert \nabla^2 \phi_1(x)\vert \leq C(\vert \nabla \phi_1(x)\vert + 1) \qquad\textup{for all }x\in \Omega_1.
        \end{equation}
        Then, either $\Omega_1=\emptyset$ or $\Omega_1=\R^{d_1}.$
    \end{proposition}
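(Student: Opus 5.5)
Assume $\Omega_1\neq\emptyset$ and $\Omega_1\neq\R^{d_1}$. We aim to show that the growth bound \eqref{eq:growth:old:app} forces $\phi_1$ to stay bounded along paths that approach $\partial\Omega_1$. This contradicts the extended continuity of $\phi_1$, since $\phi_1=\infty$ on $\partial\Omega_1$.

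First we find the path. Since $\Omega_1$ is open, nonempty and not all of $\R^{d_1}$, and $\R^{d_1}$ is connected, $\Omega_1$ is not closed, so $\partial\Omega_1\cap\Omega_1^c\neq\emptyset$. Pick $x_0\in\Omega_1$ and $z\in\partial\Omega_1$ with $\vert x_0-z\vert<\operatorname{dist}(x_0,\Omega_1^c)+1$. Alternatively, take any component and a point of $\Omega_1$ close to its boundary, and let $z$ be a nearest boundary point, so that the open segment $[x_0,z)$ lies in $\Omega_1$ and $z\notin\Omega_1$. Concretely, choose $z\in\Omega_1^c$ minimizing the distance to $x_0$. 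Then the open ball $B(x_0,\vert z-x_0\vert)\subset\Omega_1$, so $\gamma(t)=x_0+t(z-x_0)\in\Omega_1$ for $t\in[0,1)$ and $\gamma(1)=z$. Extended continuity gives $\phi_1(\gamma(t))\to\phi_1(z)=\infty$ as $t\to1$.

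Next comes a Gronwall argument on the segment. Set $g(t)\defeq\vert\nabla\phi_1(\gamma(t))\vert$ and $L\defeq\vert z-x_0\vert$. For $t\in[0,1)$ the map $\nabla\phi_1\circ\gamma$ is $C^1$, and
\begin{equation*}
\Bigl\vert\tfrac{\textup{d}}{\textup{d}t}\nabla\phi_1(\gamma(t))\Bigr\vert\le L\,\vert\nabla^2\phi_1(\gamma(t))\vert\le LC\,(g(t)+1).
\end{equation*}
Hence $g(t)\le g(0)+\int_0^t LC(g(s)+1)\,\textup{d}s$. The integral form avoids differentiability issues with the norm. Gronwall's inequality then yields $g(t)+1\le (g(0)+1)e^{LCt}\le (g(0)+1)e^{LC}$ for all $t\in[0,1)$.

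Finally we integrate once more:
\begin{equation*}
\vert\phi_1(\gamma(t))-\phi_1(x_0)\vert\le\int_0^t L\,g(s)\,\textup{d}s\le L(g(0)+1)e^{LC}
\end{equation*}
for all $t<1$. So $\phi_1\circ\gamma$ stays bounded as $t\to1$, contradicting $\phi_1(\gamma(t))\to\infty$. Therefore $\Omega_1=\emptyset$ or $\Omega_1=\R^{d_1}$.

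The only delicate step is setting up the segment entirely inside $\Omega_1$ that ends at a point where $\phi_1=\infty$. The nearest-point choice handles this; a minimizer exists because $\Omega_1^c$ is closed and nonempty. Everything else is a routine ODE comparison.
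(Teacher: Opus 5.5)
Your proof is correct and follows essentially the same route as the paper: you apply Gronwall's inequality to $\vert\nabla\phi_1\circ\gamma\vert$ along a straight segment, integrate once more to bound $\phi_1\circ\gamma$, and contradict the blow-up that extended continuity forces at a point where $\phi_1=\infty$. The only difference is how you get a segment lying in $\Omega_1$ up to its endpoint: you take a nearest point of the closed set $\Omega_1^c$, whereas the paper runs a continuation argument with $T=\sup\{t:[0,t]\subset\mathcal{T}\}$ along the segment from $x$ to an arbitrary $y$; you should also delete the abandoned alternative constructions in your second paragraph.
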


    \begin{proof}
        Assume that $\Omega_1\neq \emptyset$. Fix $x\in \Omega_1$ and $y\in \R^{d_1}$. We show that $y\in \Omega_1$.

        Define the path $\gamma(t)= (1-t)x+ty$, $t\in [0,1]$ and $\mathcal{T} \defeq \{ t \in [0,1]:\gamma(t) \in \Omega_1 \}$. By continuity of $\phi_1$ and $\gamma$, $\Omega_1$ is open in $\R^{d_1}$ and $\mathcal{T}$ is relatively open in $[0,1]$. Since $0\in \mathcal{T}$, it follows that $T \defeq \sup \{t \in [0,1]: [0,t] \subset \mathcal{T} \}>0$. For $t\in [0,T)$, define $g(t)\defeq \nabla \phi_1(\gamma(t))$. Then \eqref{eq:growth:old:app} yields 
        \begin{equation*}
            \vert g'(t)\vert = \vert \nabla^2 \phi_1(\gamma(t)) (y-x)\vert \leq C ( \vert \nabla \phi_1(\gamma(t))\vert + 1) \vert y -x\vert = C\vert y-x\vert (\vert g(t)\vert +1). 
        \end{equation*}
        It follows that
        \begin{equation*}
            \vert g(t)\vert \leq \vert g(0)\vert +  C \vert y-x\vert \int_0^t (\vert g(s)\vert +1)\,\textup{d}s\qquad\textup{for all }t\in [0,T).
        \end{equation*}
        Applying Gronwall's lemma to $\vert g(t)\vert +1$, we obtain that
        \begin{equation*}
            \vert g(t)\vert +1 \leq (\vert g(0)\vert +1) \exp( C \vert y-x\vert \,t ) \qquad\textup{for all }t\in [0,T).
        \end{equation*}
        Since $\frac{d}{dt}\phi_1(\gamma(t)) = g(t) \cdot (y-x)$, it follows that $\phi_1$ is uniformly bounded on $[0,T)$. By continuity $\phi_1(\gamma(T))<\infty$, i.e., $T \in \mathcal{T}$. Since $\mathcal{T}$ is relatively open, it follows that $T=1$. Thus, $1 \in \mathcal{T}$, i.e., $ y \in \Omega_1$. This shows that $\Omega_1=\R^{d_1}$.
    \end{proof}

    \begin{proof}[Proof of Proposition \ref{prop:vonNeumann}]
		Although formulated for complex Hilbert spaces \cite[Proposition X.4.2~a)-c)]{Conway} (and the underlying \cite[Lemma X.1.7]{Conway}) apply to real Hilbert spaces with the same proof. Consequently, $(I+T^*T)\colon D(T^*T) \to H$ is bijective with bounded inverse and $B\defeq T^*(I+TT^*)^{-1}$ is well-defined on $H$ and bounded by 1. The claim follows since $D(T^*)$ is dense in $H$ and
        \begin{align*}
        (I+T^*T)^{-1} T^* &= (I+T^*T)^{-1} T^* (I+TT^*)(I+TT^*)^{-1} \\
        &= (I+T^*T)^{-1} (I+T^*T)T^*(I+TT^*)^{-1} = B \quad\text{on }D(T^*). \qedhere
        \end{align*}
	\end{proof}

    \begin{proof}[Proof of Lemma \ref{lem:D5D7}]
		Without further reference, we use the convergence properties of the cutoff functions $\chi_n(\phi_i)$ throughout this proof and the uniform boundedness of $(\chi_n)_n$, $(\chi_n')_n$, and $(\chi_n'')_n$ whenever applying dominated convergence.
		\begin{enumerate}[label={\roman*)}]
			\item For $f\in C_c^2(\Omega_1)$ define $f_n(x,y)\defeq f(x)\chi_n(\phi_2(y))$ for $n\in\mathbb{N}$ and $(x,y)\in \Omega$. Then $f_n$ lies in $C_c^2(\Omega)=\core$, $f_n \to f$ in $H$ by dominated convergence, and
			\begin{align*}
				Sf_n
				&=  \sum_{i,j=1}^{d_2} \Sigma_{ij} \dyi\dyi[j] f_n +  (\partial_{j} \Sigma_{ij} - \Sigma_{ij} \partial_j\phi_2) \dyi f_n \\
				& \! \begin{multlined}
					= \sum_{i,j=1}^{d_2}  
					\Sigma_{ij} (\chi_n''(\phi_2)\partial_i \phi_2 \partial_j \phi_2 + \chi_n'(\phi_2) \partial_i\partial_j \phi_2) f \\ 
					\qquad\qquad + \partial_{j}\Sigma_{ij} (\chi_n'(\phi_2) \partial_i\phi_2) f
					- \Sigma_{ij} (\chi_n'(\phi_2) \partial_j \phi_2 \partial_i \phi_2) f.
				\end{multlined} \\
			\end{align*}
			If versions of $\partial_{j} \Sigma_{ij}$ are fixed, $Sf_n$ converges to 0 pointwise as $n\to\infty$. Since $f$, $\Sigma_{ij}$ and $\partial_{j}\Sigma_{ij}$ are bounded and $\lvert\nabla\phi_2\rvert^2$ and $\lvert \nabla^2\phi_2\rvert$ are in $L^2(\mu_2)$, see ($\Phi_2$3) and ($\Sigma$2), the convergence is $L^2(\Omega,\mu)$-dominated and thus holds in $H$.
			Since $(S,D(S))$ is closed, we conclude that $f\in D(S)$ and $Sf=0$.
			
			Now, let $g\in L^2(\Omega_1,\mu_1)\supset P(H)$. By Lemma \ref{lem:D_dense} $C_c^2(\Omega_1)$ is dense in $L^2(\mu_1)$, i.e., there is a sequence $(g_n)_{n\in\mathbb{N}}$ in $C_c^2(\Omega_1)\subset D(S)$ converging to $g$ in $L^2(\mu_1)\subset H$. But $Sg_n = 0$ by the above, so $g\in D(S)$ and $Sg=0$ follow since $(S,D(S))$ is closed.
			
			\item For $f\in C_c^2(\Omega_1)$ define $f_n$ as in i) above.
			Then
			\begin{equation*}
				\begin{split}
					Af_n &= ( Q^*\nabla \phi_1)\cdot \grady f_n -( Q \nabla\phi_2)\cdot \gradx f_n \\
					&= f ( Q^*\nabla \phi_1)\cdot (\chi_n'\nabla\phi_2) - \chi_n(\phi_2) ( Q \nabla\phi_2)\cdot \gradx f
					\to - ( Q \nabla\phi_2)\cdot \gradx f
				\end{split}
			\end{equation*}
			pointwise as $n\to\infty$. Note that $\vert \gradx f\vert$ and $\vert f( Q^*\gradx\phi_1)\vert$ are from $C_c(\Omega_1)\subset L^2(\mu_1)$ and that $\vert\nabla \phi_2\vert \in L^2(\mu_2)$, see ($\Phi_2$3). 
			Thus the convergence is $L^2(\mu)$-dominated by Fubini's theorem and holds in $H$.
			
			Since $(A,D(A))$ is closed, we conclude that $f\in D(A)$ and
			\begin{equation}
				\label{eq:preAPs}
				Af = -( Q\nabla\phi_2)\cdot \gradx f.
			\end{equation}
 			Since $P_S(\core) \subset C_c^2(\Omega_1)$, see Lemma \ref{lem:PS:diff}, this proves $P_S(\core)\subset D(A)$ and
 			the formula for $AP_S$ on $\core$. As $P_Sf$ and $Pf$ only differ by a constant for all $f\in H$, it remains to show that $1\in D(A)$ with $A1=0$.

			Since $\chi_n(\phi_1)\in C_c^2(\Omega_1)$ for $n\in\mathbb{N}$, \eqref{eq:preAPs} implies that
			\begin{displaymath}
				A\chi_n(\phi_1) =  -( Q\nabla\phi_2)\cdot (\chi_n'(\phi_1)\gradx\phi_1) \to 0
			\end{displaymath}
			pointwise as $n\to\infty$. With $\vert\nabla\phi_1\vert \in L^2(\mu_1)$ and $\vert\nabla\phi_2\vert\in L^2(\mu_2)$, see ($\Phi_1$3) and ($\Phi_2$3), dominated convergence yields that
			\begin{equation}
				\label{eq:A1=0}
				\chi_n(\phi_1)\to 1 \qquad\text{and}\qquad A\chi_n(\phi_1)\to 0 \qquad\text{in }H\text{ as }n\to\infty.
			\end{equation}
			Since $(A,D(A))$ is closed, $1\in D(A)$ and $A1=0$ follow.
			
			\item For $f\in \core$, we have that $ P_S f\in C_c^2(\Omega_1)$ and $g\defeq APf = -( Q\nabla\phi_2)\cdot \gradx P_S f$ by ii). Thus $g_n\defeq \chi_n(\phi_2)g\in C_c^1(\Omega)$ and by definition of $(A,C_c^1(\Omega))$ we have
			\begin{equation*}
				\begin{split}
					Ag_n
					&= ( Q^* \nabla\phi_1)\cdot (g \chi_n'(\phi_2)\nabla\phi_2 + \chi_n(\phi_2) \grady g) - (Q \grady\phi_2) \cdot (\chi_n(\phi_2)\gradx g) \\
					&\to ( Q^*\nabla\phi_1)\cdot \grady g - ( Q \nabla\phi_2)\cdot\gradx g
				\end{split}
			\end{equation*}
			pointwise as $n\to\infty$. Note that
			\begin{displaymath}
				\grady g = - (\nabla^2\phi_2) Q^*\gradx P_S f
				\qquad\text{and}\qquad
				\gradx g = -(\gradx^2 P_S f) Q \nabla \phi_2.
			\end{displaymath}
			Here, $Ag_n$ is a sum of products $f_1 f_2$ where $f_1$ is a function of $x$, $f_2$ is a function of $y$, and only $f_2$ depends on $n$. For each summand $f_1$ lies in $C_c(\Omega_1)\subset L^2(\mu_1)$ since it involves a derivative of $ P_S f\in C_c^2(\Omega_1)$ and $f_2$ is $L^2(\mu_2)$-dominated uniformly in $n$ since $\lvert\nabla\phi_2\rvert^2 \in L^2(\mu_2)$ and $\lvert \nabla^2\phi_2\rvert \in L^2(\mu_2)$.
			Hence the convergence of $Ag_n$ holds in $H$. Also, $g_n\to g$ in $H$ because $\vert g_n\vert\leq g \in H$.
			
			Since $(A,D(A))$ is closed, $g=APf\in D(A)$ with
			\begin{equation*}
					A^2Pf = Ag = ( Q^*\nabla\phi_1)\cdot \grady g - ( Q \nabla\phi_2)\cdot\gradx g.
			\end{equation*}
			
			\item Let $f\in C_c^2(\Omega_1)$ and define $f_n\in \core$ as in i) above. Then by i) and ii)
			\begin{displaymath}
				f_n\to f \quad\text{and}\quad \cL f_n = Sf_n - Af_n \to - Af \quad\text{in }H\text{ as }n\to\infty.
			\end{displaymath}
			Since $(\cL,D(\cL))$ is closed, it follows that $C_c^2(\Omega_1)\subset D(\cL)$ and $\cL=-A$ on $C_c^2(\Omega_1)$. In particular,
			$\cL\chi_n(\phi_1) = -A\chi_n(\phi_1)$  for all $n$, so that \eqref{eq:A1=0} implies $1\in D(\cL)$ and $\cL 1=0$, again since $(\cL,D(\cL))$ is closed.\qedhere 
		\end{enumerate}
	\end{proof}

    \begin{proof}[Proof of Lemma \ref{lem:phi2_integrals}]
		First, note that all integrals in the claim exist (with finite value) due to the integrability assumption ($\Phi_2$3).
		By assumption we have $\phi_2(y)=\psi(\vert y\vert^2)$ and thus
		\begin{equation}
			\label{eq:psi:diff}
			\partial_i\phi_2 (y) = \psi'(\vert y\vert^2) 2 y_i
		\end{equation}
		for $y\in\Omega_2$. On the $\mu_2$-zero set $\mathbb{R}^{d_2}\setminus \Omega$ we may set $\partial_i\phi_2 (y) \defeq 0$ and $\psi'(\vert y\vert^2)\defeq 0$, so that \eqref{eq:psi:diff} holds for all $y\in \mathbb{R}^{d_2}$. Then
		\begin{displaymath}
			\int_{\Omega_2} \partial_i\phi_2 \,\dmui{2} = \int_{\mathbb{R}^{d_2}} \psi'(\vert y\vert^2)\, 2 y_i \, Z_2^{-1} e^{-\psi(\vert y\vert^2)} \,\dy,
		\end{displaymath}
		which evaluates to zero since the latter integrand is odd. 
		Similarly,
		\begin{equation*}
			\int_{\Omega_2} \partial_i\phi_2 \,\partial_j\phi_2 \,\dmui{2} 
			= \int_{\mathbb{R}^{d_2}} \bigl(\psi'(\vert y\vert^2) 2 y_i\bigr) \, \bigl(\psi'(\vert y\vert^2) 2y_j\bigr) \, Z_2^{-1} e^{-\psi(\vert y\vert^2)} \,\dy,
		\end{equation*}
		and, if $i\neq j$, the latter integrand is odd as a function of $y_i$, so the integral vanishes.
		
		In order to verify $\int_{\Omega_2} (\partial_i\phi_2)^2 \,\dmui{2}=\konst$, we calculate that
		\begin{align*}
			d_2 	\int_{\Omega_2} (\partial_i\phi_2)^2 \,\dmui{2} 
			& = \sum_{k=1}^{d_2} 4 Z_2^{-1} \int_{\mathbb{R}^{d_2}} \psi'(\vert y\vert^2)^2 e^{-\psi(\vert y\vert^2)} \,  y_i^2 \,\dy \\
			& = \sum_{k=1}^{d_2} 4 Z_2^{-1} \int_{\mathbb{R}^{d_2}} \psi'(\vert y\vert^2)^2 e^{-\psi(\vert y\vert^2)} \,  y_k^2 \,\dy 
			= \sum_{k=1}^{d_2} \int_{\Omega_2} (\partial_k\phi_2)^2 \,\dmui{2}.
		\end{align*}
		Here, the second equality follows from the invariance of the Euclidean norm under orthogonal coordinate transformations such as interchanging $y_i$ and $y_k$.

		Finally, recall the cutoff functions from Lemma \ref{lem:cutoffs}: The sequences $(\chi_n(\phi_2))_{n\in\mathbb{N}}$ and $(\chi_n'(\phi_2))_{n\in\mathbb{N}}$ converge to $1$ and $0$, respectively, on $\Omega_2$ dominated by 
		constant functions. Therefore, dominated convergence and integration by parts yield 
		\begin{align*}
				\int_{\Omega_2} \partial_i\partial_j\phi_2 \,\dmui{2} 
				& = \lim_{n\to\infty} \int_{\Omega_2}  \partial_i\partial_j\phi_2 \;\chi_n(\phi_2)\,\dmui{2} \\
				& = \lim_{n\to\infty} - \int_{\Omega_2}  \partial_j\phi_2 \; \bigl(\chi_n'(\phi_2)\partial_i\phi_2 \bigr) \,\dmui{2} + \int_{\Omega_2} \partial_j\phi_2 \, \chi_n(\phi_2) \; \partial_i\phi_2\,\dmui{2} \\
				& = \int_{\Omega_2} \partial_j\phi_2\, \partial_i\phi_2 \,\dmui{2}. \qedhere
		\end{align*}
	\end{proof}
    
    \section{Details on the Coordinate Transformations}
    \label{app:details_coordinate_trafos}
    In this section, we present details on the coordinate transformations employed in proofs of Theorem \ref{thm:main:abstract}, Lemma \ref{lem:essT:suff}, and Lemma \ref{lem:essL:suff}.
    
    \begin{proof}[Proof of Theorem \ref{thm:main:abstract} -- Coordinate Transformation]
        To reduce the general setting to the special case of $\tau=I$ and $b=0$,
        we consider on $\mathbb{R}^{d_1} \times \mathbb{R}^{d_2}$ the coordinate transformation
		\begin{equation*}
			U(x,y)=(U_1(x),U_2(y)) \quad\text{for}\quad U_1(x) = x \quad\text{and}\quad U_2(y) = \tau^{-1}(y+b),
		\end{equation*} 
		and define
		\begin{equation}
			\label{eq:transformation_symmetry_phi2}
			\bar{\phi}_i\defeq \phi_i\circ U_i, \qquad
			\bar{Q}\defeq Q\tau^*, \qquad\text{and}\qquad
			\bar{\Sigma}\defeq (\tau \Sigma \tau^*) \circ U_2.
		\end{equation}
		As detailed below, $\bar{\phi}_1$, $\bar{\phi}_2$, $\bar{\Sigma}$ and $\bar{Q}$ have essentially the same properties as $\phi_1$, $\phi_2$, $\Sigma$ and $Q$, and hence all subsequent definitions apply. All objects in this transformed framework are denoted with a bar on top.
		Note that $\bar{\phi}_1=\phi_1$ and $\bar{\phi}_2 = \psi(\vert \cdot \vert^2)$, i.e., $\bar{\tau}=I$ and $\bar{b}=0$.
		
		The original and transformed framework are equivalent in the following sense: The assumptions \textup{($\Phi_1$1)} - \textup{($\Phi_1$4)}, \textup{($\Phi_2$1)} - \textup{($\Phi_2$5)}, \textup{($\Sigma$1)}, \textup{($\Sigma$2)}, \textup{(eL)} and \textup{(eT)} hold if and only if corresponding assumptions \textup{($\bar{\Phi}_1$1)} - \textup{($\bar{\Phi}_1$4)}, \textup{($\bar{\Phi}_2$1)} - \textup{($\bar{\Phi}_2$5)}, \textup{($\bar{\Sigma}$1)}, \textup{($\bar{\Sigma}$2)}, \textup{(e$\bar{\textup{L}}$)} and \textup{(e$\bar{\textup{T}}$)} hold in the transformed setting (with adapted constants). The hypocoercivity conditions (H) hold if and only if corresponding conditions ($\bar{\textup{H}}$) hold in the transformed setting. In this case the feasible constants $\Lambda_m$, $\Lambda_M$, $N_1$ and $N_2$ in (H) coincide with the constants in ($\bar{\textup{H}}$).
		
		It is straightforward but tedious to verify this assertion. The essential arguments are as follows. 
		First, the image measures $\bar{\mu}_i\circ U_i^{-1}$ and $\bar{\mu}\circ U^{-1}$ can be identified as $\mu_i$ and $\mu$ using the change-of-variables formula. Thus,
        \begin{equation*}
            J_i\colon L^2(\mathbb{R}^{d_i},\mu_i) \to L^2(\mathbb{R}^{d_i},\bar{\mu}_i), \quad f\mapsto f\circ U_i
            \quad\text{and}\quad
            J \colon L^2(\mu) \to L^2(\bar{\mu}), \quad f\mapsto f\circ U
        \end{equation*}
		are unitary isomorphisms. The chain-rule yields $\bar{\core}=C_c^2(\bar{\Omega}) = J(C_c^2(\Omega)) = J(\core)$ and
		\begin{equation*}
			\bar{S}(f\circ U) = (Sf)\circ U \qquad\text{and}\qquad \bar{A} (f\circ U) = (Af) \circ U 
		\end{equation*}
        for all $f\in \core$, i.e., $(\bar{S}, \bar{\core})=(JSJ^{-1}, J(\core))$ and $(\bar{A}, \bar{\core}) = (JSJ^{-1},J(\core))$. Since $J$ is unitary, it follows that
		$(\bar{A}, D(\bar{A})) = (JAJ^{-1},J(D(A)))$ and $(\bar{S}, D(\bar{S})) = (JSJ^{-1},J(D(S)))$.
		
		Using $\bar{\mu}_i\circ U_i^{-1}=\mu_i$ we calculate that $\bar{P}_S = JP_S J^{-1}$ and $\bar{P} = JPJ^{-1}$. By definition of $G$ on $\core$ as $PA^2P$, it follows that $(\bar{G},\bar{\core}) = (JGJ^{-1},J(\core))$. Furthermore,
		$(\bar{A}\bar{P})^* = J(AP)^*J^{-1}$
        and $(I+(\bar{A}\bar{P})^*(\bar{A}\bar{P}))^{-1} = J(I+(AP)^*(AP))^{-1} J^{-1}$ on the respective domains, and therefore $\bar{B} = JBJ^{-1}$ on $D((\bar{A}\bar{P})^*)=J(D((AP)^*))$ and ultimately on $\bar{H}=J(H)$.
		Consequently, we obtain equivalence of the hypocoercivity conditions in both frameworks and that the respective constants coincide.
		
		Equivalence of the assumptions on the original and the transformed potentials follows essentially from the chain rule and the relation $\mu_i=\bar{\mu}_i\circ U_i^{-1}$.
		Moreover, ($\Sigma$1) and ($\Sigma$2) are clearly equivalent to \textup{($\bar{\Sigma}$1)} and \textup{($\bar{\Sigma}$2)}, and there are positive constants $c_1(\tau)$ and $c_2(\tau)$ depending only on  $\tau$ such that
		\begin{equation}
			\label{eq:sigma_bar_vs_sigma}
			c_{\sm{\bar{\Sigma}}} = c_1(\tau) \Cell \qquad\text{and}\qquad M_{\sm{\bar{\Sigma}}} \leq c_2(\tau) \Csigma.
		\end{equation}
		As $(\bar{\sL}, \bar{\core}) = (J\sL J^{-1}, J(\core))$, the image $(I-\bar{\sL})(\bar{\core})$ is dense in $L^2(\bar{\mu})$ if and only if $(I-\sL)(\core)$ is dense in $L^2(\mu)$. Therefore (eL) and \textup{(e$\bar{\textup{L}}$)} are equivalent.
		Since $\bar{\tau}=I$, we have $\bar{Q}\bar{\tau}^* = Q\tau^*$ and thus $(\bar{T},C_c^2(\bar{\Omega}_1)) = (T, C_c^2(\Omega))$. Hence (eT) and (e$\bar{\textup{T}}$) coincide.
		
		Having established the claimed equivalences, the general case is proven as follows. Using the above transformation, we move to a transformed framework where $\bar{\phi}_2$ is radially symmetric, apply the results of Section \ref{subsec:hypocoercivity_conditions}, and obtain that the hypocoercivity conditions (in both the transformed and the original setting) are satisfied. Due to \eqref{eq:sigma_bar_vs_sigma}, the $\Sigma$-dependence of the constants involved is of the form \eqref{eq:sigma_dependence}. Thus the convergence rate is obtained exactly as in the radially symmetric case.
    \end{proof}

    \begin{proof}[Proof of Lemma \ref{lem:essT:suff} -- Coordinate Transformation]
        We show that $(T,C_c^2(\Omega))$ takes the simple form
		\begin{equation}
			\label{eq:T:simple:app}
			T = \Delta - \nabla \phi_1 \cdot \nabla 
		\end{equation}
		under a suitable coordinate transformation. Since $Q$ has full rank and $\tau$ is invertible, $Q\tau^* \tau Q^*\in \mathbb{R}^{d_1\times d_1}$ is symmetric and positive definite. Therefore there is an invertible square matrix $Q_s\in\mathbb{R}^{d_1\times d_1}$ such that $Q\tau^* \tau Q^* = Q_sQ_s^*$. Now, consider the coordinate transformation $U(x)=Q_s x$ on $\mathbb{R}^{d_1}$, set $\bar{\phi}_1 \defeq \phi_1 \circ U$, $\bar{\Omega}_1\defeq \{\bar{\phi}_1 <\infty \}$ and define $\bar{\mu}_1$ accordingly. Then 
		\begin{equation*}
			\iota\colon L^2(\mathbb{R}^{d_1},\mu_1) \to L^2(\mathbb{R}^{d_1},\bar{\mu}_1), \qquad f\mapsto f\circ U
		\end{equation*}
		defines an isometric isomorphism that maps $C_c^2(\Omega_1)$ to $C_c^2(\bar{\Omega}_1)$. The chain rule yields 
		\begin{equation*}
			\bar{T}(\iota(f)) \defeq (\Delta - \nabla\bar{\phi}_1 \cdot \nabla) (\iota(f)) = \iota(Tf)
		\end{equation*}
		for all $f \in C_c^2(\Omega_1)$. 
        It follows that $(\bar{T},C_c^2(\bar{\Omega}_1))$ is densely defined, symmetric and negative semidefinite, too, and that $(T,C_c^2(\Omega_1))$  is essentially $m$-dissipative, i.e., essentially self-adjoint, if and only if $(\bar{T},C_c^2(\bar{\Omega}_1))$ is.
		Furthermore, \textup{($\Phi_1$1)}, \textup{($\Phi_1$5)} and \textup{(eT)$'$} are invariant with respect to the linear coordinate transformation, i.e., $\bar{\phi}_1$, $\bar{\Omega}_1$ and $\bar{\mu}_1$ have corresponding properties. We may therefore assume without loss of generality that $T$ has the simple form \eqref{eq:T:simple:app}.
    \end{proof}

    \begin{proof}[Proof of Lemma \ref{lem:essL:suff} -- Coordinate Transformation]
        We show that $(\sL, \core)$ takes the simple form
		\begin{equation}
			\label{eq:L:simple:app}
			\sL = \Delta_y - \nabla \phi_2 \cdot \grady + \nabla \phi_2 \cdot \gradx - \nabla \phi_1 \cdot \grady 
		\end{equation}
		under a suitable coordinate transformation. Since $\Sigma$ is symmetric and positive definite, there is an invertible $\sigma\in\mathbb{R}^{d\times d}$ such that $\Sigma = \sigma\sigma^*$. Now, on $\mathbb{R}^{2d}$ consider the coordinate transformation 
		\begin{equation*}
			U(x,y)=(U_1(x),U_2(y)) \quad\text{for}\quad U_1(x) = Q(\sigma^{-1})^* x \quad\text{and}\quad U_2(y) = \sigma y.
		\end{equation*} 
		Define $\bar{\phi}_i \defeq \phi_i \circ U_i$, $\bar{\Omega}_i= \{\bar{\phi}<\infty\}$ and $\bar{\mu}_i$ as the probability measure on $\mathbb{R}^d$ obtained by normalizing $e^{-\bar{\phi}_i}\,\dx$. Furthermore, set $\bar{\Omega}=\bar{\Omega}_1 \times \bar{\Omega}_2$ and $\bar{\mu}\defeq \bar{\mu}_1 \times \bar{\mu}_2$.
		
		It is now easily verified that $\mu_i$ coincides with the image measure $\bar{\mu}_i\circ U_i^{-1}$ and that
		\begin{equation*}
			J_i\colon L^2(\mathbb{R}^{d},\mu_i) \to L^2(\mathbb{R}^{d},\bar{\mu}_i), \qquad f\mapsto f\circ U_i
		\end{equation*}
		defines an isometric isomorphism that maps $C_c^2(\Omega_i)$ to $C_c^2(\bar{\Omega}_i)$. Similarly, $J\colon L^2(\mu) \to L^2(\bar{\mu}), f\mapsto f\circ U$ is an isometric isomorphism mapping $\core$ to $C_c^2(\bar{\Omega})$.
		
		Define $(\bar{\sL},C_c^2(\bar{\Omega}))$ by
		\begin{equation*}
			\bar{\sL}f \defeq \Delta_y f - \nabla \bar{\phi}_2 \cdot \grady f + \nabla \bar{\phi}_2 \cdot \gradx f - \nabla \bar{\phi}_1 \cdot \grady f. 
		\end{equation*}
		Then the chain rule yields that
		\begin{equation*}
			\bar{\sL}(Jf) = \bar{\sL}(f\circ U) = (\sL f) \circ U = J(\sL f)
		\end{equation*}
		for all $f\in \core$. So with $(\sL, \core)$ also $(\bar{\sL},C_c^2(\bar{\Omega}))$ is densely defined and dissipative, and
		\begin{equation*}
			(I-\bar{\sL})(C_c^2(\bar{\Omega})) = (I-\bar{\sL})(J(\core)) = J((I-\sL)(\core)).
		\end{equation*}
		Thus $(\sL, \core)$ is essentially $m$-dissipative if and only if $(\bar{\sL},C_c^2(\bar{\Omega}))$ is.
		
		Using the chain-rule and the fact that $\mu_i=\bar{\mu}_i\circ U_i^{-1}$, it is easy to check that our assumptions on $\phi_1$ and $\phi_2$ carry over to $\bar{\phi}_1$ and $\bar{\phi_2}$. Thus we may assume without loss of generality that $\sL$ has the simple form \eqref{eq:L:simple:app}.
    \end{proof}
    
    \section{Details on the Stochastic Representation}
    \label{app:details_stochastic_representation}

    In the following, we give a detailed account of the properties of $(\cL,D(\cL))$ and its semigroup used in the proof of Theorem \ref{thm:associated_process}. We work under the assumptions of Section \ref{sec:stochastic_interpretation}.

       \begin{lemma}
		The semigroup $(T_t)_{t\geq 0}$ on $L^2(\Omega, \mu)$ generated by $(\cL,D(\cL))$ is sub-Markovian, i.e., $0 \leq T_t f \leq 1$ for all $t\geq 0$ and $f\in L^2(\Omega,\mu)$ with $0\leq f\leq 1$, and conservative, i.e., $T_t 1 = 1$ for all $t\geq 0$.
	\end{lemma}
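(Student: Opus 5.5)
Conservativity is immediate: by Lemma~\ref{lem:D5D7}~iv) we have $1\in D(\cL)$ and $\cL 1=0$. Hence $\frac{\textup{d}}{\textup{d}t}T_t1=T_t\cL 1=0$, and therefore $T_t1=1$ for all $t\geq 0$. The substantive part is sub-Markovianity. I plan to prove positivity preservation and the $L^\infty$-contraction property at the level of the resolvent $G_\alpha=(\alpha-\cL)^{-1}$, and then pass to the semigroup via the Euler formula $T_tf=\lim_{n\to\infty}(\tfrac nt G_{n/t})^n f$. It therefore suffices to show: if $f\in L^2(\mu)$ with $0\le f\le 1$ and $\alpha>0$, then $0\le \alpha G_\alpha f\le 1$.

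For the lower bound, I would use the standard criterion that a dissipative generator produces a positivity-preserving semigroup as soon as $\SPH{\cL u}{u^+}\le 0$ for all $u\in D(\cL)$, and in fact it is enough to check this on the core $\core$. Fix $u\in\core$. The function $u^+$ is Lipschitz with compact support in $\Omega$. Applying Lemma~\ref{lem:bilinearforms} (after a mollification or $\varepsilon$-regularisation of $s\mapsto s^+$, or by treating the Lipschitz case directly through the integration by parts) gives
\[
\SPH{Su}{u^+}=-\int \Sigma\grady u\cdot\grady u^+\,\dmu=-\int_{\{u>0\}}\Sigma\grady u\cdot\grady u\,\dmu\le 0 .
\]
For the antisymmetric part, $A$ is a first-order operator, i.e.\ the derivation along a vector field that is divergence-free with respect to $\mu$. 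Consequently $\SPH{Au}{u^+}=\int A(\tfrac12 (u^+)^2)\,\dmu=0$, which again follows from Lemma~\ref{lem:bilinearforms} together with $A1=0$. Together these give $\SPH{\cL u}{u^+}\le0$ on $\core$.

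The extension from the core to $D(\cL)$ is the step I expect to be the main obstacle. My first attempt would be to argue via the resolvent. Let $g\in L^2$ with $g\ge0$ and set $u=G_\alpha g$. Choose $u_n\in\core$ with $u_n\to u$ and $\cL u_n\to\cL u$. Then $\alpha u_n-\cL u_n\to g$, and
\[
\alpha\|u_n^-\|^2\le \SPH{\alpha u_n-\cL u_n}{-u_n^-}\to\SPH{g}{-u^-}\le0 ,
\]
using that $u\mapsto u^-$ is continuous on $L^2$. Hence $u^-=0$.

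The upper bound follows the same scheme with $(u-1)^+$ in place of $u^+$, where $1\in D(\cL)$ and $\cL1=0$ are used. For $u\in\core$ the function $u-1$ is not compactly supported, so I would work with $(u-1)^+$, which vanishes off $\operatorname{supp}u$ and is therefore still compactly supported in $\Omega$. The same computation yields $\SPH{\cL u}{(u-1)^+}\le0$. The resolvent argument, applied to $u=\alpha G_\alpha f$ with $f\le1$, then gives $(u-1)^+=0$. Finally, I would combine both bounds with the Euler formula to conclude that $0\le T_tf\le1$.
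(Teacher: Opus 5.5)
Your argument is correct. Conservativity is handled exactly as in the paper, via (D7) and $\frac{\textup{d}}{\textup{d}t}T_t1=T_t\cL1=0$. For sub-Markovianity, however, you take a genuinely different route.

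The paper never works with truncations. It notes that $(\sL,\core)$ is an abstract diffusion operator in Eberle's sense: the algebra $C_c^2(\Omega)$ is stable under composition with smooth functions vanishing at $0$, and the chain rule holds with a nonnegative carr\'e du champ, by ellipticity of $\Sigma$. It then uses that $\mu$ is invariant by (D6) and quotes Eberle's Appendix~B, Lemma~1.9.

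You instead give a self-contained Beurling--Deny/Phillips-type proof:
\begin{enumerate}
\item show $(\cL u,u^+)_H\le 0$ and $(\cL u,(u-1)^+)_H\le 0$ on $\core$, using the form representation of $S$ and the fact that $A$ is a $\mu$-divergence-free derivation;
\item transfer these to $\alpha G_\alpha$ by core approximation and the $L^2$-Lipschitz continuity of $v\mapsto v^\pm$, with $\mu(E)<\infty$ ensuring $1\in H$;
\item pass to $T_t$ by the Euler formula.
\end{enumerate}

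Your version makes explicit exactly which structure is used: ellipticity of $\Sigma$, $\mu$-antisymmetry of the transport part, and finiteness of $\mu$. The cost is integrating by parts against the merely Lipschitz functions $u^+$ and $(u-1)^+$. The cleanest way to handle this is the regularisation you mention. Take $\eta_\varepsilon(u)$ with $\eta_\varepsilon\in C^1(\mathbb{R})$, $\eta_\varepsilon=0$ on $(-\infty,0]$ and $\eta_\varepsilon'\ge 0$. Then $\eta_\varepsilon(u)$ lies in $C_c^1(\Omega)$, so Lemma~\ref{lem:bilinearforms} applies verbatim and gives $(Su,\eta_\varepsilon(u))_H\le 0$.

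The paper's route is shorter and stays inside the algebra $\core$, delegating exactly this truncation argument to the cited lemma.

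Finally, the step you flagged as the main obstacle is harmless. The inequality $(\cL u,u^+)_H\le 0$ passes from $\core$ to all of $D(\cL)$ directly by graph-norm continuity, which is what your resolvent computation effectively exploits.
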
 
    
	\begin{proof}
		A straightforward calculation verifies that $(\sL, \core)$ is an abstract diffusion operator on $L^2(\Omega,\mu)$ in the sense of \cite[Appendix B, Definition 1.5]{Eberle_EssSelfAdjointSchroedingerOperators}. Moreover, $\mu$ is invariant for $(\sL, \core)$ in the sense of data condition (D6), i.e., $\int_{\Omega} \sL f \,\dmu = 0$ for all $f\in \core$. Therefore, $(T_t)_{t\geq 0}$ is sub-Markovian by \cite[Appendix B, Lemma 1.9]{Eberle_EssSelfAdjointSchroedingerOperators}.
        Data condition (D7) implies that $\tfrac{\textup{d}}{\textup{d}t}T_t1 = T_t\cL 1 = 0$ for all $t\geq 0$. Hence $T_t1 = T_01 = 1$ for all  $t\geq 0$. 
	\end{proof}
	
	As a generator of a sub-Markovian strongly continuous contraction semigroup, $(\cL,D(\cL))$ defines a sub-Markovian strongly continuous contraction resolvent $(G_\alpha)_{\alpha > 0}$ on $L^2(\Omega,\mu)$ by $G_\alpha \defeq (\alpha - \cL)^{-1}$, see \cite[Propositions I.1.10 and I.4.3]{MR92}. The notions of nests, exceptional sets, and properties holding quasi-everywhere, see e.g. \cite[Section 1]{BBR06}, are in the following understood with respect to this resolvent.

    \begin{lemma}
        \begin{enumerate}[label={\roman*)}]
            \item The compact sets $(F_k)_{k\in\mathbb{N}} \subset \Omega$ where $F_k = \overline{\{\phi_1 < k\}} \times \overline{\{\phi_2 < k\}}$ form a nest. Here, $\overline{\{\phi_i < k\}}$ denotes the closure of the set $\Omega_{i,k}$ as defined in \textup{($\Phi_i$1)}.
            \item The set $\core_\textup{ext}\defeq \{ f+\lambda : f\in \core, \lambda\in \R\}$ is a core of $(\cL,D(\cL))$, forms an algebra and consists of continuous bounded functions.
            \item There is a sequence of continuous functions in $D(\cL)$ separating the points of $\Omega$.
        \end{enumerate}
		
	\end{lemma}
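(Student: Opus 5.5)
We treat the three claims separately. Each uses only the standing assumptions of Section~\ref{sec:stochastic_interpretation}, the cutoff functions of Lemma~\ref{lem:cutoffs}, and Lemma~\ref{lem:D5D7}.

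For ii), $\core_\textup{ext}$ is clearly an algebra of bounded continuous functions on $\Omega$. It lies in $D(\cL)$ because $\core\subset D(\cL)$ and $1\in D(\cL)$ by Lemma~\ref{lem:D5D7} iv). Since $\core\subset\core_\textup{ext}\subset D(\cL)$ and $\core$ is already a core, $\core_\textup{ext}$ is a core as well.

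For iii), we take a countable family of functions in $\core=C_c^2(\Omega)$ that separates points. Fix a countable base of $\Omega$ consisting of balls $B_m$ with $\overline{B_m}\subset\Omega$. For each $m$, choose $\eta_m\in C_c^\infty(\Omega)$ with $\eta_m=1$ on $B_m$ and support in a slightly larger ball $B_m'$ with $\overline{B_m'}\subset\Omega$. The family of all $\eta_m$ separates points: given $z\neq z'$, pick a ball $B_m\ni z$ whose concentric enlargement $B_m'$ avoids $z'$. These functions are continuous and lie in $D(\cL)$.

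For i), the main step, we must show that $F_k$ is compact and that $\bigcup_k D(\cL)_{F_k}$ is dense in $D(\cL)$ with respect to the graph norm. Here $D(\cL)_{F}=\{u\in D(\cL): u=0\ \mu\text{-a.e. on }\Omega\setminus F\}$. In the generalized Dirichlet form setting the nest is usually defined via the coexcessive or graph norm (cf.\ \cite{Stannat_GeneralizedDirichletForms}), and graph norm density suffices. Compactness of $F_k$ in $\Omega$ follows from ($\Phi_i$1), since the closures of the sublevel sets are compact and contained in $\Omega_i$. For density, it suffices to treat $u$ in the core $\core_\textup{ext}$.

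\begin{itemize}
\item If $u\in\core$, then $\supp u$ is a compact subset of $\Omega$. Since $\phi_1+\phi_2$ is continuous on $\Omega$, $\phi_1$ and $\phi_2$ are bounded on $\supp u$, so $\supp u\subset F_k$ for large $k$.
\item For the constant $1$, use $v_n\defeq\chi_n(\phi_1)\chi_n(\phi_2)\in\core$, which vanishes outside $F_{n+1}$. Then $v_n\to1$ in $H$, and
\[
\sL v_n=\chi_n(\phi_1)\,S\chi_n(\phi_2)-A\bigl(\chi_n(\phi_1)\chi_n(\phi_2)\bigr).
\]
The first term contains $\chi_n',\chi_n''$ multiplied by $|\nabla\phi_2|^2$, $|\nabla^2\phi_2|$ and $|\nabla\phi_2|$. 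The second term contains $\chi_n'(\phi_1)\chi_n(\phi_2)\,Q\nabla\phi_2\cdot\nabla\phi_1$ and $\chi_n(\phi_1)\chi_n'(\phi_2)\,Q^*\nabla\phi_1\cdot\nabla\phi_2$.
\end{itemize}

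All terms tend to $0$ pointwise. They are dominated in $L^2(\mu)$ by products of a function of $x$ and a function of $y$, using ($\Phi_1$3), ($\Phi_2$3) and ($\Sigma$2), exactly as in the proof of Lemma~\ref{lem:D5D7}. Hence $v_n\to 1$ in the graph norm, and $\cL 1=0$ identifies the limit.

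The main obstacle is matching the precise notion of nest from \cite{Stannat_GeneralizedDirichletForms} and \cite{BBR06}, which may require density in a norm involving $\cL$ or its coform. If a more restrictive norm is required, I would argue via the dual semigroup: $\core$ is also contained in $D(\hat\cL)$ with $\hat\cL=S+A$, the same computation applies to $S+A$, and so the cutoff approximation yields density in that norm as well.
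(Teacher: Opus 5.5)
Your proof is correct and follows essentially the paper's route: compactness of $F_k$ from ($\Phi_i$1), the core and algebra properties of $\core_\textup{ext}$ from $\core\subset\core_\textup{ext}\subset D(\cL)$ and $1\in D(\cL)$, a separating sequence in $C_c^\infty(\Omega)$, and the nest property via graph-norm density. The paper invokes \cite[Remark III.2.11]{Stannat_GeneralizedDirichletForms}, which reduces the nest property to $\{f\in D(\cL): f=0\ \mu\text{-a.e. on }\Omega\setminus F_k\text{ for some }k\}$ being a core, so no coform norm is needed. The only differences are that your cutoff approximation of the constant $1$ is superfluous, since $\core$ itself is a core of $(\cL,D(\cL))$ and already lies in $\bigcup_k D(\cL)_{F_k}$ (this is the paper's one-line argument), and that your dual-semigroup fallback is likewise unnecessary. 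As stated, that fallback would also require $\core$ to be a core of $\cL^*$, which your cutoff computation does not establish.
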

	\begin{proof}
		i) The sets $F_k$ are compact and contained in $\Omega$ as a consequence of ($\Phi_1$1) and ($\Phi_2$1). To conclude that they form a nest, it is sufficient by \cite[Remark III.2.11]{Stannat_GeneralizedDirichletForms} to show that
		\begin{equation*}
			\mathcal{C} \defeq \{f \in D(\cL) : f = 0 \text{ }\mu\text{-a.s. on } \Omega\setminus F_k \text{ for some }k\in\mathbb{N}\}
		\end{equation*}
		is a core of $(\cL,D(\cL))$. But this is immediate since $\core\subset \mathcal{C}$ is a core of $(\cL,D(\cL))$.
		(Note that $\phi_1$ and $\phi_2$ are bounded on every compact subset of $\Omega$, so for all $f\in \core = C_c^2(\Omega)$ we have that $\supp f \subset F_k$ for some $k$, which proves that $\core \subset\mathcal{C}$.)

        ii) Data condition (D7) implies that $\core \subset \core_\textup{ext} \subset D(\cL)$, so $\core_\textup{ext}$ is a core of $(\cL,D(\cL))$. Since $\core = C_c^2(\Omega)$ and $1 \in\core_\textup{ext}$, $\core_\textup{ext}$ is an algebra of continuous bounded functions.

        iii) Since $\Omega$ is open and has a countable dense subset, there is a separating sequence in $\ccinfty(\Omega)\subset D(\cL)$.
	\end{proof}

	\begin{lemma}
		The operator $(\cL,D(\cL))$ is local, i.e., for every continuous $f\in D(\cL)$ we have that $\cL f = 0$ $\mu$-almost everywhere on $\Omega \setminus \supp f$.
	\end{lemma}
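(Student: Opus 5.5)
The idea is to reduce locality to the differential-operator structure of $\sL$ on the core $\core$ and pass to the limit. Let $f\in D(\cL)$ be continuous and set $U\defeq \Omega\setminus\supp f$, which is open. We must show $\cL f=0$ $\mu$-a.e.\ on $U$. Since $\core$ is dense in $H$ (Lemma \ref{lem:D_dense}), it suffices to show $\SPH{\cL f}{g}=0$ for all $g\in C_c^\infty(U)$. We realise such pairings through the adjoint. For $g\in C_c^2(\Omega)$ we have $g\in D(\cL^*)$, and by integration by parts (Lemma \ref{lem:bilinearforms}, together with (D6)) the formal adjoint is
\begin{equation*}
\sL^* g = Sg + Ag = \tr(\Sigma\nabla_y^2 g) + (\divergence\Sigma-\Sigma\grady\phi_2)\cdot\grady g - (Q\grady\phi_2)\cdot\gradx g + (Q^*\gradx\phi_1)\cdot\grady g .
\end{equation*}
This expression is a bounded function supported in $\supp g$. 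Concretely, $\SPH{\sL h}{g}=\SPH{h}{\sL^* g}$ holds for $h\in\core$. It extends to all $h\in D(\cL)$ because $\core$ is a core and $\sL^*g\in H$. Hence $\cL^*\supset(\sL^*,\core)$.

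Now take $g\in C_c^2(U)$. Then
\begin{equation*}
\SPH{\cL f}{g}=\SPH{f}{\sL^*g}=\int_\Omega f\,\sL^* g\,\dmu = 0,
\end{equation*}
because $\sL^*g$ vanishes outside $\supp g\subset U$, and $f=0$ on $U$ by continuity and the definition of the support. Since $C_c^2(U)$ is dense in $L^2(U,\mu)$ (the argument of Lemma \ref{lem:D_dense} applied to the open set $U$), it follows that $\cL f=0$ $\mu$-a.e.\ on $U$.

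The main point to check carefully is the extension of the identity $\SPH{\cL h}{g}=\SPH{h}{\sL^*g}$ from $h\in\core$ to $h\in D(\cL)$. For $h_n\to h$ in the graph norm, both sides converge: the left side because $\cL h_n\to\cL h$, and the right side because $\sL^* g$ is a fixed element of $H$. Its membership in $H$ uses ($\Sigma$2) and the local boundedness of $\nabla\phi_i$ on compacts in $\Omega_i$ by ($\Phi_i$1).

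A secondary subtlety is the meaning of "$\supp f$" for an element of $H$. Here we use the continuous representative (unique by Remark \ref{rem:conventions}), so $f\equiv0$ pointwise on the open set $U$ and no quasi-continuity issues arise.
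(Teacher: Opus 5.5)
Your proof is correct and is essentially the paper's argument: approximate $f$ in the graph norm by $f_n\in\core$, move the operator onto a test function $g$ supported in $\Omega\setminus\supp f$ using the symmetry of $S$ and the antisymmetry of $A$ (your $\sL^*g=Sg+Ag$), and conclude from the disjoint supports together with the density of $C_c^\infty(\Omega\setminus\supp f)$ in $L^2(\Omega\setminus\supp f,\mu)$. The only slip is cosmetic: the reduction at the start needs this last density statement, not the density of $\core$ in $H$ that you cite there, and you do invoke the correct one later.
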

	\begin{proof}
		Fix $f$ as above. Since $\core$ is a core of $(\cL,D(\cL))$ we find functions $f_n \in \core$ such that $f_n \to f$ and $\cL f_n \to \cL f$ in $H$ as $n\to \infty$. For every $\varphi \in \ccinfty(\Omega\setminus \supp f)$ the (anti-)symmetry of $S$ and $A$ yields
		\begin{equation*}
			\SPH{\cL f_n}{\varphi} = \SPH{f_n}{S\varphi} + \SPH{f_n}{A\varphi},
		\end{equation*}
		and in the limit
		\begin{equation*}
			\SPH{\cL f}{\varphi} = \SPH{f}{S\varphi + A\varphi} = 0,
		\end{equation*}
		where the last equality is due to the disjoint supports of $f$ and $\varphi$.
		
		Set $g\defeq \cL f \cdot \ind{\Omega\setminus \supp f}$. As in Lemma \ref{lem:D_dense} we obtain that $\ccinfty(\Omega\setminus \supp f)$ is dense in $L^2(\Omega\setminus\supp f, \mu)$, and thus there are functions $\varphi_n \in \ccinfty(\Omega\setminus \supp f)$ such that $\varphi_n \to g$ in $H$ as $n\to \infty$. Consequently,
		\begin{equation*}
			\Vert \cL f \Vert_{L^2(\Omega\setminus\supp f, \mu)}^2 = \SPH{\cL f}{g} = 0. \qedhere
		\end{equation*}
	\end{proof}

    Next, we provide the proof of Theorem \ref{thm:weak_solution_sde}.
    \begin{proof}[Proof of Theorem \ref{thm:weak_solution_sde}]
        First, we fix a sequence of stopping times. Let $\tau_n \defeq \inf \bigl\{ t\geq 0 : (X_t,Y_t) \notin \overline{\Omega_{1,n}} \times \overline{\Omega_{2,n}} \bigr\}$. Then $\tau_n$ is a stopping time since $(X_t,Y_t)_{t\geq 0}$ and $(\mathcal{G}_{t+})_{t\geq 0}$ are right-continuous. By definition $(\tau_n)_{n\in\mathbb{N}}$ is increasing and we have $\tau_n \to\infty$: Each path $\omega \in C([0,\infty);\Omega)$ restricted to a finite interval $[0,T]$ has a compact image, on which $\phi_1$ and $\phi_2$ are bounded, hence $T<\tau_n(\omega)$ for sufficiently large $n$. 

		Next, set $\pi_i\colon \mathbb{R}^{d_1} \to \mathbb{R}, x\mapsto x_i$ for $1\leq i \leq d_1$ and $\rho_j \colon \mathbb{R}^{d_2} \to \mathbb{R}, y\mapsto y_j$ for $1\leq j \leq d_2$ and define the cutoffs $\pi_{i,n} \defeq \chi_n(\phi_1) \chi_n(\phi_2) \pi_i$ and $\rho_{j,n} \defeq \chi_n(\phi_1) \chi_n(\phi_2) \rho_j$ for $n\in\mathbb{N}$. 
        Here, we choose $\chi_n$ in Definition \ref{def:cutoffs} such that $\chi_n = 1$ on $(-\infty, n+\tfrac{1}{2})$, 
        then $\pi_{i,n}=\pi_i$ and $\rho_{j,n}=\rho_j$ on an open superset of $\overline{\Omega_{1,n}} \times \overline{\Omega_{2,n}}$.
        Note that $\pi_{i,n}, \rho_{j,n} \in C_c^2(\Omega)$.
		Thus formula \eqref{eq:intro:generator} yields the following identities on $\overline{\Omega_{1,n}} \times \overline{\Omega_{2,n}}$:
        \begin{align}
        	\label{eq:Lpi_i,n}
        	\cL \pi_{i,n} &= (Q\nabla\phi_2)_i, 
        	&&
        	\cL (\pi_{i,n})^2 = (Q\nabla\phi_2)_i \, 2\pi_{i,n}, \\
        	\label{eq:Lrho_j,n}
        	\cL \rho_{j,n} &= (\divergence \Sigma - \Sigma \nabla\phi_2 - Q^*\nabla\phi_1)_j,
        	&& 
        	\cL (\rho_{j,n} \rho_{k,n}) = 2\Sigma_{jk} + \rho_{k,n} \cL \rho_{j,n} + \rho_{j,n} \cL \rho_{k,n}.
        \end{align}
        
        We inspect the martingales ${M}_t^{[\pi_{i,n}]}$ and ${N}_t^{[\pi_{i,n}]}$ from Theorem \ref{thm:martingale_problem_path_space} and Remark \ref{rem:martingale_solutions:regularity}. Recall that they are defined pointwise, given a continuous/bounded choice of representatives.
        By construction we have $({X}_s,{Y}_s)\in \overline{\Omega_{1,n}} \times \overline{\Omega_{2,n}}$ for $s < \tau_n$.
		Therefore, by \eqref{eq:martingale:m} and \eqref{eq:Lpi_i,n},
		\begin{equation}
        \label{eq:M:localization:line1}
			{M}_{t\wedge \tau_n}^{[\pi_{i,n}]} = {X}_{t\wedge \tau_n}^i - {X}_0^i - \int_{0}^{t\wedge \tau_n} (Q\nabla\phi_2)_i({X}_s,{Y}_s) \,\textup{d}s \quad\text{for all }t\geq 0.
		\end{equation}
		As $\cL (\pi_{i,n}^2)-2\pi_{i,n}\cL \pi_{i,n}=0$ on $\overline{\Omega_{1,n}} \times \overline{\Omega_{2,n}}$ by \eqref{eq:Lpi_i,n}, we obtain from \eqref{eq:martingale:n} that
		\begin{equation}
        \label{eq:N:localization:line1}
			{N}_{t\wedge \tau_n}^{[\pi_{i,n}]} = \Bigl({M}_{t\wedge \tau_n}^{[\pi_{i,n}]}\Bigr)^2 \quad\text{for all }t\geq 0.
		\end{equation}
		Since ${M}^{[\pi_{i,n}]}$ and ${N}^{[\pi_{i,n}]}$ are continuous martingales, so are the stopped processes $({M}_{t\wedge {\tau_n}}^{[\pi_{i,n}]})_{t\geq 0}$ and $({N}_{t\wedge {\tau_n}}^{[\pi_{i,n}]})_{t\geq 0}$.
        It follows using \eqref{eq:M:localization:line1} and \eqref{eq:N:localization:line1} that the processes ${M}^{[\pi_i]}$ and ${N}^{[\pi_i]}$ defined by 
        \begin{equation*}
            {M}_t^{[\pi_{i}]} \defeq {X}_{t}^{i} - {X}_{0}^{i} - \int_{0}^{t} (Q\nabla\phi_2)_i({X}_s,{Y}_s) \,\textup{d}s \quad\text{and}\quad
            {N}_t^{[\pi_i]} \defeq ({M}_t^{[\pi_i]})^2
        \end{equation*}
        are continuous local martingales. In particular, ${M}^{[\pi_i]}$ has quadratic variation zero. Hence ${M}_t^{[\pi_{i}]} = {M}_0^{[\pi_{i}]}= 0$ for all $t\geq 0$ 
        $\mathbb{P}_{h\mu}$-almost surely. By definition of ${M}^{[\pi_i]}$ this proves \eqref{eq:SDE:X}.
		
		Next, we turn to \eqref{eq:SDE:Y}. 
        Using \eqref{eq:martingale:m} and \eqref{eq:Lrho_j,n}, we obtain that
        \begin{equation*}
            \label{eq:M:localization:line2}
			{M}_{t\wedge \tau_n}^{[\rho_{j,n}]} = {Y}_{t\wedge \tau_n}^j - {Y}_0^j - \int_{0}^{t\wedge \tau_n} (\divergence \Sigma - \Sigma \nabla\phi_2 - Q^*\nabla\phi_1)_j({X}_s,{Y}_s) \,\textup{d}s
		\end{equation*}
        and conclude as above that ${M}^{[\rho_j]}$ defined by
		\begin{equation}
			\label{eq:SDE:Y_tilde}
			{M}_t^{[\rho_{j}]} \defeq {Y}_{t}^{j} - {Y}_{0}^{j} - \int_{0}^{t} (\divergence \Sigma - \Sigma \nabla\phi_2 - Q^*\nabla\phi_1)_j({X}_s,{Y}_s) \,\textup{d}s
		\end{equation}
        is a continuous local martingale.
        
		We compute the quadratic covariation $\left[{M}^{[\rho_j]}, {M}^{[\rho_k]}\right]$. 
        Set ${M}^{[\rho_j\pm \rho_k]} \defeq {M}^{[\rho_j]} \pm {M}^{[\rho_k]}$. Then
		\begin{equation*}
			{M}_{t\wedge \tau_n}^{[\rho_j\pm \rho_k]} = {M}_{t\wedge \tau_n}^{[\rho_{j}]} \pm {M}_{t\wedge \tau_n}^{[\rho_{k}]} 
			= {M}_{t\wedge \tau_n}^{[\rho_{j,n}]} \pm {M}_{t\wedge \tau_n}^{[\rho_{k,n}]}
			= {M}_{t\wedge \tau_n}^{[\rho_{j,n} \pm \rho_{k,n}]},
		\end{equation*}
        where the last equality is due to linearity of \eqref{eq:martingale:m}. Using \eqref{eq:martingale:n} and \eqref{eq:Lrho_j,n}, we compute that
		\begin{equation*}
			{N}_{t\wedge \tau_n}^{[\rho_{j,n}\pm \rho_{k,n}]} = \Bigl({M}_{t\wedge \tau_n}^{[\rho_{j,n}\pm \rho_{k,n}]}\Bigr)^2 - \int_{0}^{t\wedge \tau_n}  2\Sigma_{jj}({Y}_s) + 2\Sigma_{kk}({Y}_s) \pm 4\Sigma_{jk}({Y}_s) \,\textup{d}s.
		\end{equation*}
		We conclude as above that $N^{[\rho_j\pm \rho_k]}$ defined via
		\begin{equation}
			\label{eq:covariation}
			{N}_{t}^{[\rho_{j}\pm \rho_{k}]} \defeq \Bigl({M}_{t}^{[\rho_{j}\pm \rho_{k}]}\Bigr)^2 - \int_{0}^{t}  2\Sigma_{jj}({Y}_s) + 2\Sigma_{kk}({Y}_s) \pm 4\Sigma_{jk}({Y}_s) \,\textup{d}s
		\end{equation}
		is a continuous local martingale. Denoting the $i$-th standard unit vector by $e_i$, we have 
        \begin{equation*}
            2\Sigma_{jj} + 2\Sigma_{kk} \pm 4\Sigma_{jk}= 2 (e_j \pm e_k)^T \Sigma (e_j \pm e_k) = 2 \vert \sigma^* (e_j \pm e_k) \vert^2 \geq 0,
        \end{equation*}
        so the integral term in \eqref{eq:covariation} defines an increasing, continuous, adapted process and can be identified as the quadratic variation of ${M}^{[\rho_j\pm \rho_k]} = {M}^{[\rho_j]} \pm {M}^{[\rho_k]}$. It follows that
		\begin{equation*}
			\left[{M}^{[\rho_j]},{M}^{[\rho_k]}\right]_t =  \int_{0}^{t} 2 \Sigma_{jk}({Y}_s) \,\textup{d}s.
		\end{equation*}
		
		By assumption $\Sigma$ is bounded and uniformly strictly elliptic. Therefore $\sigma$ is invertible, and both $\sigma$ and $\sigma^{-1}$ are continuous and bounded. Thus  $B_t^{i} \defeq \sum_{j=1}^{d_2} \int_{0}^{t} \frac{1}{\sqrt{2}} (\sigma^{-1})_{ij}({Y}_s) \,\textup{d}{M}_s^{[\rho_j]}$ is a continuous local martingale with
		\begin{equation}
			\begin{aligned}
				\left[B^{i}, B^{j} \right]_t 
				&= \sum_{k,l=1}^{d_2} \frac{1}{2} \int_{0}^{t} (\sigma^{-1})_{ik}({Y}_s) (\sigma^{-1})_{jl}({Y}_s) \,\textup{d}[{M}^{[\rho_k]},{M}^{[\rho_l]}]_s \\
				&= \sum_{k,l=1}^{d_2} \int_{0}^{t} (\sigma^{-1})_{ik}({Y}_s) \Sigma_{kl}({Y}_s) (\sigma^{-1})_{jl}(Y_s) \,\textup{d}s = \delta_{ij} t.
			\end{aligned}
		\end{equation}
		By L\'evy's characterization of Brownian motion, $(B_t)_{t\geq 0}$ is a $d_2$-dimensional standard Brownian motion.
		Finally, $(\int_{0}^{t}\sqrt{2} \sigma({Y}_s) \,\textup{d}B_s)_j = {M}_t^{[\rho_j]}$ by construction, which can be inserted into  \eqref{eq:SDE:Y_tilde}. This proves \eqref{eq:SDE:Y}. 
	\end{proof}
\end{document}